\documentclass[12pt]{article}
\usepackage[utf8]{inputenc}
\usepackage[english]{babel}
\usepackage{algorithm}
\usepackage{algpseudocode}
\usepackage{nicefrac}
\usepackage{authblk}
\usepackage{srcltx}
\usepackage{color}
\usepackage{xcolor}
\usepackage{graphicx}
\usepackage{subcaption}
\usepackage{amsmath}
\usepackage{amssymb}
\usepackage{theorem}
\usepackage{euscript}
\usepackage{stmaryrd}
\usepackage{epic,eepic}
\usepackage{pstricks}
\usepackage{tikz} 
\usepackage{hyperref}
\usepackage{cleveref}
\usepackage{float}
\usepackage{multirow, array}
\usepackage{dsfont}
\usepackage{mathrsfs}
\usepackage{mathdots}
\usepackage{mathtools}
\usepackage{stackrel}
\usepackage{relsize}
\usepackage{listings}
\usepackage{setspace} 
\usepackage{blindtext}
\usepackage{epstopdf}
\usepackage{afterpage}
\usepackage{parskip}
\usepackage{booktabs}
\usepackage{stackrel}
\usepackage{cite}
\usepackage{fancybox, calc}
\usepackage{verbatim}
\usepackage{cleveref} 
\usepackage[usenames,dvipsnames,svgnames]{xcolor}
\usepackage{tikz}
\usetikzlibrary{shapes,arrows, calc}
\usetikzlibrary{fit,positioning}
\tikzstyle{every node}=[font=\scriptsize]
\tikzstyle{block} = [draw, rectangle]
\tikzstyle{sum} = [draw, circle, node distance=1cm, inner sep=2pt]
\tikzstyle{input} = [coordinate]
\tikzstyle{output} = [coordinate]
\tikzstyle{pinstyle} = [pin edge={to-,thin,black}]
\definecolor{lightseagreen}{rgb}{0.13, 0.7, 0.67}
\definecolor{lightskyblue}{rgb}{0.53, 0.81, 0.98}
\definecolor{lightsalmonpink}{rgb}{1.0, 0.6, 0.6}
\definecolor{mossgreen}{rgb}{0.68, 0.87, 0.68}
\tikzstyle{chart_node_algorithm} = [rectangle, rounded corners, text width=0.17\textwidth, minimum height=1cm,text centered, fill=lightsalmonpink!10, draw=lightsalmonpink, thick]

\tikzstyle{chart_node_lqr} = [rectangle, rounded corners, text width=0.17\textwidth, minimum height=1cm,text centered, fill=mossgreen!10, draw=mossgreen, thick]

\tikzstyle{chart_node_theory} = [rectangle, rounded corners, text width=0.17\textwidth, minimum height=1cm,text centered, draw=lightskyblue, thick, fill=lightskyblue!10]

\newcommand{\HS}{\mathrm{HS}}

\newcommand{\R}{\mathbb{R}}
\newcommand{\E}{\mathbb{E}}
\newcommand{\cK}{\mathcal K}

\newcommand{\cH}{\mathcal H}

\newcommand{\cJ}{\mathcal J}

\newcommand{\norm}[1]{\lVert #1\rVert}

\newcommand{\uf}{\mathfrak u}
\newcommand{\cU}{\mathcal U}
\newcommand{\cX}{\mathcal X}

\newcommand{\cB}{\mathcal B}

\newcommand{\umpc}{{\mathfrak u}_T^{\textrm{MPC}}}
\newcommand{\mumpc}{{\mu}_T}
\newcommand{\mumpck}{{\mu}_k}
\newcommand{\mpc}{\textrm{MPC}}

\newcommand{\ff}{\mathfrak f}

\newcommand{\cD}{\mathcal{D}}
\newcommand{\bbN}{\mathbb{N}}
\newcommand{\bbR}{\mathbb{R}}

\newcommand{\hnorm}[1]{\left\| #1 \right\|_{\cH}}
\newcommand{\abs}[1]{\left| #1 \right|}

\allowdisplaybreaks

\newcommand{\off}[1]{}

\newcommand{\cL}{\mathcal L}
\newcommand{\cR}{\mathcal R}
\newcommand{\bbZ}{\mathbb Z}
\newcommand{\bbT}{\mathbb T}

\DeclareMathOperator*{\argmin}{arg\,min}

\newtheorem{theorem}{Theorem}[section]
\newtheorem{lemma}[theorem]{Lemma}
\newtheorem{corollary}[theorem]{Corollary}
\newtheorem{proposition}[theorem]{Proposition}
\theoremstyle{plain}
\newtheorem{example}[theorem]{Example}
\newtheorem{remark}[theorem]{Remark}

\newtheorem{assumption}[theorem]{Assumption}

\title{Learning to Control Switching Nonlinear Systems with Koopman Operator Regression\thanks{Preprint. Under review.}}
\author[1]{Edoardo Caldarelli\footnote{Equal contribution.}} 
\author[2]{Oleksii Kachaiev$^{\dagger}$}
\author[2]{Cesare Molinari}
\author[1, 3]{Lorenzo Rosasco}
\date{}
\affil[1]{Istituto Italiano di Tecnologia, Genoa, Italy}
\affil[2]{MaLGa center, DIMA, Università degli Studi di Genova, Genoa, Italy}
\affil[3]{MaLGa center, DIBRIS, Università degli Studi di Genova, Genoa, Italy}
\affil[ ]{Correspondence to: \texttt{edoardo.caldarelli@iit.it}}
\begin{document}
\maketitle
\begin{abstract}
In this work, we consider the identification and control of nonlinear systems with finite action spaces. The unknown dynamics are estimated from finite samples with Koopman operator regression in a reproducing kernel Hilbert space, yielding a linear switching predictive model, the switches governed by the value of the control variable. In order to perform control in closed-loop, the learned dynamics are employed in an infinite-horizon optimal control problem with time-varying stage cost, which is solved by means of model predictive control. In a theoretical analysis, we derive learning rates for the Koopman dynamics approximation. We further quantify, under suitable assumptions, the sub-optimality of the model predictive control strategy, both in the case of exact Koopman dynamics, and in the case of learned ones. Numerical simulations on the Duffing oscillator complement our theoretical findings. 
\end{abstract}
\paragraph*{Keywords}
Koopman operator; kernel methods; switching controllers; model predictive control; data-based control.            
\section{Introduction and related works}
Optimal control is a well established, powerful tool to control dynamical systems in closed-loop \cite{lewis2012optimal}. Given a state space $\cX$, a control space $\cU$, a flow map $\ff:\cX\times\cU\to \cX$, and a dynamical system described, e.g., by a difference equation
\begin{equation}
    x_{t+1} = \ff(x_t, u_t),\label{eq:intro_dynsys}
\end{equation}
optimal control entails solving an optimization problem of the form
\begin{equation}
    \min_{u_0, u_1, \dots}\sum_{t=0}^\infty g(t, x_t, u_t), \text{ subject to } x_{t+1} = \ff(x_t, u_t),\ x_0\in\cX. \label{eq:ocp_intro}
\end{equation}
for some stage cost $g:\mathbb N_0\times \cX \times \cU\to \mathbb R_{\geq 0}$. 

While optimal control for linear systems enjoys a long-lasting history of results and well-developed techniques \cite{kwakernaak1972linear,tsiamis2023statistical}, when the system's dynamics are nonlinear, the solution of problem \eqref{eq:ocp_intro} is challenging. The Koopman operator formalism \cite{koopman1931hamiltonian,mezic2021koopman,brunton2022modern,mezic2026koopman} has emerged as an effective paradigm to obtain a global linearization of the dynamics of interest, by lifting the state to a possibly infinite-dimensional space of functions  $\cH$, named \emph{observable space}. Having introduced a \emph{lifting map} $x\mapsto\psi_x$, we can define the lifted state as
\begin{equation}
    z_t = \psi_{x_t}.
\end{equation}
We can therefore describe how such a lifted state evolves over time, in terms of the Koopman operator. 

When considering controlled dynamical systems, the classical formulation of the Koopman operator needs to be adapted to account for the control variable $u$ \cite{brunton2022modern}. State-of-the-art approaches consider additional linearity assumptions on the impact of the control on the lifted dynamics \cite{korda2018linear,caldarelli2025linear,heeg2026limitations}. Other concurrent works, such as \cite{strasser2024safedmd,worthmann2024data,bold2024data}, restrict the dynamics \eqref{eq:intro_dynsys} to be bi-linear, which transfers to a similar structure in the Koopman model. In this work, we assume the set of available controls to be finite (a common feature, e.g., in power electronics applications \cite{karamanakos2019guidelines}). In this way, the nonlinear dynamics translate into a \emph{family} of systems, one for each value of the control variable. Consequently, we are able to define a corresponding family of Koopman operators $\cK_u:\cH\to\cH$, that evolve the lifted state $z$ over time according to the rule 
\begin{equation}
    z_{t+1} = \cK_{u_t} z_t.\label{eq:lifted_dynamics_intro}
\end{equation}
The \emph{linear switching} system in \eqref{eq:lifted_dynamics_intro} was firstly introduced in the seminal work by Peitz \& Klus \cite{peitz2019koopman}, and is analogous to \eqref{eq:intro_dynsys} under suitable assumptions. Furthermore, it can be related to state-of-the-art models in reinforcement learning with operator world models, see the work by Novelli et al.\ \cite{novelli2024operator}. 

Such lifted dynamics can be used to define an optimal control in the lifted state space, i.e., 
\begin{equation}
        \min_{u_0, u_1, \dots}\sum_{t=0}^\infty \ell(t, z_t, u_t), \text{ subject to } z_{t+1} = \cK_{u_t}z_t,\ z_0 = \psi_{x_0}.\label{eq:lifted_ocp_intro}
\end{equation} 
In this way, we can make predictions leveraging the efficiency of model \eqref{eq:lifted_dynamics_intro}, which entails composing linear operators instead of nonlinear flow maps \cite{peitz2019koopman}. Such an optimal control problem can either be designed directly in the lifted state, or derived from a problem of the form \eqref{eq:ocp_intro} by choosing $\ell(t, z, u) = g(t, x, u)$, with $x$  such that $z=\psi_x$. Lastly, note that the finite nature of the control set may render the closed-loop dynamics ultimately bounded, but not asymptotically stable in the limit \cite{aguilera2013stability}. This in turn may yield an unbounded optimal control cost, even though the resulting closed-loop dynamics may be stable. To overcome this issue, we employ here a \emph{time-varying} stage cost $\ell$  \cite{anand2024optimality,schwenkel2024discount,moldenhauer2026discounted}. 

In the framework described above, we tackle  two main challenges. The first one corresponds to the operators $\cK_u$  being unknown. In this case, a family of \emph{surrogate} models $\widehat \cK_u$ is learned from data, and used in place of the exact dynamics in problem \eqref{eq:lifted_ocp_intro}, yielding a \emph{data-driven} control pipeline \cite{soudbakhsh2023data}. To estimate the linearized Koopman dynamics \eqref{eq:lifted_dynamics_intro} from snapshots of the system evolution over time, one may resort to \emph{Koopman operator regression} algorithms \cite{bevanda2021koopman,nuske2023finite}, i.e., system identification techniques applied to models of the form \eqref{eq:lifted_dynamics_intro}. In this work, we learn the Koopman operators by nonparametric regression in \emph{reproducing kernel Hilbert spaces} (RKHSs) \cite{aronszajn1950theory}, and study its convergence guarantees. RKHSs have been broadly popular in machine learning \cite{scholkopf2002learning}, and have been recently investigated in relationship with Koopman operator regression \cite{das2020koopman,klus2020kernel,bevanda2023koopman,khosravi2023representer,philipp2023error}, together with related theoretical properties \cite{kostic2022learning,kostic2023sharp,kostic2024consistent}.
In the framework described above, we tackle  two main challenges. The first one corresponds to the operators $\cK_u$  being unknown. In this case, a family of \emph{surrogate} models $\widehat \cK_u$ is learned from data, and used in place of the exact dynamics in problem \eqref{eq:lifted_ocp_intro}, yielding a \emph{data-driven} control pipeline \cite{soudbakhsh2023data}. To estimate the linearized Koopman dynamics \eqref{eq:lifted_dynamics_intro} from snapshots of the system evolution over time, one may resort to \emph{Koopman operator regression} algorithms \cite{bevanda2021koopman,nuske2023finite}, i.e., system identification techniques applied to models of the form \eqref{eq:lifted_dynamics_intro}. In this work, we learn the Koopman operators by nonparametric regression in \emph{reproducing kernel Hilbert spaces} (RKHSs) \cite{aronszajn1950theory}, and study its convergence guarantees. RKHSs have been broadly popular in machine learning \cite{scholkopf2002learning}, and have been recently investigated in relationship with Koopman operator regression \cite{das2020koopman,klus2020kernel,bevanda2023koopman,khosravi2023representer,philipp2023error}, together with related theoretical properties \cite{kostic2022learning,kostic2023sharp,kostic2024consistent}.

The second challenge is given by the infinite predictive horizon appearing in problem \eqref{eq:lifted_ocp_intro}, which renders it computationally intractable. To deal with this issue,  we compute an approximate solution of problem \eqref{eq:lifted_ocp_intro} via model predictive control (MPC) \cite{rawlings2020model,peitz2019koopman}. From a theoretical point of view, we are concerned with analyzing: How well the MPC algorithm can solve the intractable problem \eqref{eq:lifted_ocp_intro} when the exact Koopman operators are known, (this can be seen as the limit where infinite data are available);  How the estimation error, which we incur by learning the Koopman models from finite data, impacts such a performance, in line with robust MPC approaches \cite{saltik2018outlook}.

In this paper, we consider {unconstrained} MPC without terminal cost. While the addition of terminal ingredients may support a theoretical analysis through the lenses of recursive feasibility \cite{mayne2013apologia}, the design of meaningful constraints and a terminal penalty is often times challenging (see, e.g., \cite{grune2008infinite}). 
\paragraph*{Contributions} Our contributions are summarized as  follows:
\begin{itemize}
\item We study a linear, Koopman-based predictive model for nonlinear dynamics with finite control spaces. 
    \item We show how such Koopman models can be learned by nonparametric regression in an RKHS and derive learning bounds. 
   Importantly, our analysis does not rely on ergodicity or time-reversibility assumptions on the dynamics of interest.
    \item We show how to leverage  the learned surrogate dynamics in a RKHS to formulate an infinite-horizon, unconstrained optimal control problem with time-varying stage cost, designed to accommodate for the finite nature of the control set. Such an optimal control problem is solved using MPC.
    \item We perform a  theoretical performance analysis, where we derive sub-optimality bounds for the MPC algorithm, both when the exact Koopman model or the surrogate models are used.
    \item We provide an open-source implementation\footnote{Code available at \url{https://github.com/caedoard/switching-koopman-mpc}.} of the algorithms discussed in the paper, and assess their performance on an illustrative numerical example, empirically corroborating our theoretical findings.
\end{itemize}

\paragraph*{Outline} The structure of this paper is as follows: Section \ref{sec:koopman-rev} describes and analyzes the considered system identification technique , namely, Koopman operator regression. Section \ref{sec:optimal_control} describes and investigates the theoretical properties of the MPC algorithm used to control the dynamics of interest in closed-loop. Section \ref{sec:experiments} shows the performance of the proposed MPC algorithm on the Duffing oscillator. Lastly, Section \ref{sec:conclusion} discusses conclusions and future work.

\paragraph*{Notations} $\mathbb R_{\geq 0}$ denotes the set of real numbers greater or equal to 0, and $\mathbb N_0$ denotes the set of natural numbers greater or equal to 0. \( \cX \) denotes a measurable state space endowed with Borel \( \sigma \)-algebra \( \cB(\cX) \). \( \mathcal P (\cX) \) is a space of probability measures on \( \cX \). For an operator $A$, $\norm{A}$ denotes its operator norm, $\norm{A}_\HS$ denotes its Hilbert-Schmidt norm (when defined). $\mathcal L^2_\rho$ denotes the space of functions \( f: \cX \to \bbR \) square integrable w.r.t.\ the measure $\rho$. We denote by $\cL(\cH)$ the set of bounded linear operators  on $\cH$, and we let $\HS(\cH)$ the set of Hilbert-Schmidt operators  on $\cH$. Lastly, we denote as $a\wedge b$ the maximum between $a$ and $b$.

\section{Koopman system identification}
\label{sec:koopman-rev}
Let $\cX$ be a state space, and  $\cU$  a control space. We consider a nonlinear dynamical system with a finite control space, i.e., $|\cU|<\infty$. Specifically, we denote by $\ff: \cX \times \cU \to \cX$ the flow map of such a dynamical system, i.e., for $x_0\in\cX$, $t \geq 0$, $u_t\in \cU$, we have
\begin{align}
    x_{t+1} &= \ff(x_t, u_t)\label{eq:dynsys_xu}\\
    &= \ff_{u_t}(x_t).\nonumber
\end{align}
Note that $\ff$ is a \emph{nonlinear switching flow map}: The switches happen at each time step among systems in the \emph{finite family} $\{\ff_u\}_{u\in\cU}$, and are governed by the control $u_t$.

The flow map $\ff$ may be unknown, and can therefore be approximated from some realizations of the system's evolution over time. Learning $\ff$ directly from data would yield a \emph{nonlinear switching data-driven} system. On the other hand, the Koopman operator formalism is a way to obtain \emph{linear} switching dynamics. This reduces the problem of making predictions of the system's evolution to the composition of linear operators \cite{peitz2019koopman}, among the multiple benefits of having globally linear models \cite{brunton2022modern,korda2018linear,mezic2021koopman,strasser2026overview}. The Koopman operator formulation relies on transforming the state of the system to a possibly infinite-dimensional space of design choice. In this work, we opt for RKHSs, as detailed in the following.

\subsection{Koopman lifting in an RKHS}
For a state space $\cX$, an RKHS $(\cH, \langle \cdot, \cdot \rangle_\cH)$ is a Hilbert space of scalar functions on $\cX$ for which there exists a $k:\cX \times \cX \rightarrow \R$, the \emph{reproducing kernel}, such that, for {any $x \in \cX$ and $g \in \cH$, it holds} $k(x, \cdot)\in \cH$ and $g(x) = \langle g, k(x, \cdot)\rangle_{\cH}$ \cite{aronszajn1950theory}.
Every $k$ is a positive definite (p.d.) kernel. This means that, given a collection of $p$ points $x_1,\ \dots,\  x_p\in \cX$, the matrix defined as $K_{i,j} = k(x_i, x_j)$ for $i,j \in \{1,\ \dots,\ p\}$ is positive definite, i.e, 
\begin{equation}
\sum_{i, j} c_ic_jK_{ij}\geq 0, \forall c_i, c_j \in \mathbb R.\label{eq:pd_kernel}
\end{equation}
The canonical feature map of $k$ is defined as $x \mapsto \psi_x \coloneqq k(x,\cdot) \in \cH$, so that \( k(x,y)=\langle \psi_x,\psi_y\rangle_\cH,\ \forall  x,y\in\cX \). 

In this work, we assume $k$ to be \emph{strictly p.d.}, meaning that 0 in $\eqref{eq:pd_kernel}$ is attained iff $c_i=c_j=0,\ \forall i, j$ \cite{scholkopf2002learning}.  
It is straightforward to see, that this assumption ensures  that canonical feature map is injective. One example of strictly p.d.\ kernels is given by translation invariant kernels, i.e., kernels depending on the distance between two points in $\cX$, such as the Gaussian kernel with bandwidth $l>0$: 
\begin{equation}
    k(x, x') = e^{-\frac{{\lVert x- x'\rVert}^2}{2l^2}}.\label{eq:gaussian_kernel}
\end{equation}
Another example of kernel is the so-called \emph{random feature} kernel, which approximates a stationary kernel like \eqref{eq:gaussian_kernel} as the inner product of two finite dimensional vectors \cite{rahimi2007random}.

Let $\Psi(\cX) \coloneqq \{\psi_x:x\in\cX\}$ be the image of the state space according to the canonical feature map. Intuitively, we can interpret the feature map as an embedding, or lifting, of the state space in a high dimensional space. Then $\Psi(\cX)$ is the set of embedded, or lifted, states.  

\paragraph*{Lifted dynamics}Given the lifted state set $\Psi(\cX)$, we can proceed to construct a suitable operator describing how the embedded states evolve over time, according to the Koopman paradigm. Fix \( u\in\cU \). By the injectivity of $\psi$, we can  define the  pre-Koopman operator $K_u:\Psi(\cX)\to\Psi(\cX)$ as
\begin{equation}
K_u\psi_x \coloneqq \psi_{\ff_u(x)}, \quad \forall x\in\cX.
\label{eq:prekoopman}
\end{equation}
Note that \eqref{eq:prekoopman} introduces the notion of \emph{lifted dynamics} in $\Psi(\cX)$, governed by the operator $K_u$. However, $\Psi(\cX)$ is a set, and lacks structure, hindering the theoretical tractability of $K_u$. For this reason, we extend the operator $K_u$ to a linear operator $\cK_u: \cH\to\cH$. This allows to reformulate the system's dynamics in the space $\cH$, via the rule \eqref{eq:prekoopman}. Given an initial state $x_0\in\cX$, we can define the lifted initial state
\begin{equation*}
z_0 \coloneqq \psi_{x_0}.
\end{equation*}
Then, for each $t\ge 0$,
\begin{equation}
z_{t+1} = \cK_{u_t} z_t.
\label{eq:exact-lifted-dynamics}
\end{equation}
This system representation is a \emph{linear switching system} on $\cH$, it is possibly infinite-dimensional, and alleviates the burden of relying on the nonlinear flow map $\ff$. Note that $z_t=\psi_{x_t}$, according to \eqref{eq:prekoopman}. For notational convenience, we will refer to model \eqref{eq:exact-lifted-dynamics} by means of the following function $f:\cH\times \cU \to \cH$,
\begin{equation*}
    f(z, u) \coloneqq \cK_uz.
\end{equation*}
\paragraph*{Extension to whole RKHS}
To construct the extension $\cK_u$, we proceed as follows. First, we define $\cH_0 \coloneqq \operatorname{span} \left\{ \Psi(\cX) \right\}$. By definition, $\cH_0$ is a linear subspace of $\cH$, inheriting the inner product $\langle a, b\rangle_{\cH_0} = \langle a, b\rangle_{\cH}$. Moreover, $\cH_0$ is dense in $\cH$ \cite{aronszajn1950theory}, i.e., $\overline \cH_0=\cH$.
For a strictly p.d.\ kernel, there exists a unique linear extension of $K_u:\Psi(\cX)\to\Psi(\cX)$ to an operator $\widetilde K_u:\cH_0 \to \cH$, as shown in the following proposition. 
\begin{proposition}
\label{lem:linear-extension-H0}
Assume the kernel \( k \) is strictly p.d.
Fix $u \in \cU$. Let $h \in \cH_0$ and let $\{(x_i,\alpha_i)\}_{i=1}^m\subset \cX\times\bbR$ such that $h = \sum_{i=1}^m \alpha_i \psi_{x_i}$. Define 
\begin{equation}
\widetilde K_u\left(h\right)
= \sum_{i=1}^m \alpha_i \psi_{\ff_u(x_i)}.
\label{eq:linear-extension-definition}
\end{equation}
Then $\widetilde K_u$ is a well-defined operator from $\cH_0$ to $\cH$, meaning that the right-hand side is independent of the chosen representation of $h$. Moreover, it is the unique linear operator which extends $K_u:\Psi(\cX)\to\cH$.
\end{proposition}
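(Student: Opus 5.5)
The key fact is that strict positive definiteness makes the feature vectors $\psi_x$ at \emph{distinct} points linearly independent. Every $h\in\cH_0$ then has a canonical representation, and the right-hand side of \eqref{eq:linear-extension-definition} can be read off from it.

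First I show linear independence. Take distinct $y_1,\dots,y_p\in\cX$ and suppose $\sum_j c_j\psi_{y_j}=0$. Then
\[
0=\Big\|\sum_j c_j\psi_{y_j}\Big\|_\cH^2=\sum_{i,j}c_ic_jk(y_i,y_j),
\]
and strict p.d.\ of $k$ forces $c_j=0$ for all $j$.

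Next I prove well-definedness by reducing to the canonical form. Given any representation $h=\sum_{i=1}^m\alpha_i\psi_{x_i}$, the $x_i$ need not be distinct. Let $y_1,\dots,y_p$ be the distinct values among them, and set $\beta_j=\sum_{i:x_i=y_j}\alpha_i$. Then $h=\sum_j\beta_j\psi_{y_j}$. Grouping terms in the same way gives
\[
\sum_i\alpha_i\psi_{\ff_u(x_i)}=\sum_j\beta_j\psi_{\ff_u(y_j)},
\]
because points with $x_i=y_j$ share the image $\ff_u(y_j)$. So the right-hand side of \eqref{eq:linear-extension-definition} depends only on the reduced pairs $(y_j,\beta_j)$.

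Now take two representations $\sum_i\alpha_i\psi_{x_i}=\sum_l\alpha'_l\psi_{x'_l}$. Reduce both over the common finite set $\{y_1,\dots,y_p\}$ of all points appearing in either one, padding with zero coefficients. Subtracting gives $\sum_j(\beta_j-\beta'_j)\psi_{y_j}=0$, and linear independence yields $\beta_j=\beta'_j$ for every $j$. Hence both representations produce the same value of $\widetilde K_u h$. This bookkeeping step, where repeated points must be merged \emph{before} independence is invoked, is the only delicate point of the argument.

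Linearity follows directly. Concatenating representations of $h$ and $g$, with the coefficients of $g$ scaled by $\lambda$, gives a representation of $h+\lambda g$. Since the value does not depend on the representation chosen, $\widetilde K_u(h+\lambda g)=\widetilde K_u h+\lambda\widetilde K_u g$.

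Finally, $\widetilde K_u$ extends $K_u$: taking $m=1$, $\alpha_1=1$ gives $\widetilde K_u\psi_x=\psi_{\ff_u(x)}=K_u\psi_x$. For uniqueness, let $L:\cH_0\to\cH$ be any linear operator with $L\psi_x=\psi_{\ff_u(x)}$ for all $x$. For every $h=\sum_i\alpha_i\psi_{x_i}\in\cH_0$, linearity forces
\[
Lh=\sum_i\alpha_i\psi_{\ff_u(x_i)}=\widetilde K_u h,
\]
so $L=\widetilde K_u$, since $\cH_0$ is by definition the span of $\Psi(\cX)$.
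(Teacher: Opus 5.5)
Your proof is correct and takes essentially the same route as the paper. Strict positive definiteness shows that a vanishing combination $\sum_i \alpha_i\psi_{x_i}=0$ forces $\sum_i\alpha_i\psi_{\ff_u(x_i)}=0$, and then one subtracts two representations and checks linearity and uniqueness on the generators $\psi_x$. Your merging of repeated points before invoking independence is a refinement the paper omits. The paper's step $\|\sum_i\alpha_i\psi_{x_i}\|_\cH^2=0 \Rightarrow \alpha_1=\dots=\alpha_m=0$ is false when the $x_i$ repeat, as they can after subtracting two representations. The conclusion is still true, and your grouping argument is exactly what establishes it.
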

The proof of this result is reported in Appendix \ref{app:proof-linear-extension}.

In order to extend $\widetilde K_u$ to an operator $\cH\to\cH$, we make the following boundedness assumption.

\begin{assumption}[Uniform boundedness on $\cH_0$]
\label{ass:prekoopman-uniformly-bounded}
For each $u\in\cU$ there exists $R_u<\infty$ such that
\begin{equation}
\label{eq:prekoopman-cond}
\|\widetilde K_u h\|_\cH \le R_u \|h\|_{\cH_0},
\quad \forall h\in\cH_0.
\end{equation}
Let
\begin{equation}
R_{*} \coloneqq \max_{u \in \cU} R_u.
\label{eq:r-star}
\end{equation}
\end{assumption}
\begin{example}[Sobolev space observables]
This assumption is satisfied, for example, in the following setup. Consider $\cX = \bbT^d := \R^d / \bbZ^d$ and let \( \cH=H^s(\cX) \), i.e., the space of Sobolev functions on a periodic domain. Suppose that, for each $u\in\cU$, the map \( \ff_u:\cX \to \cX \) is a \( C^\infty \)-diffeomorphism. Then the composition operator \( C_{\ff_u}:H^s(\cX)\to H^s(\cX) \) given by
\begin{equation*}
C_{\ff_u}h:=h\circ \ff_u,
\quad \forall h \in \cH,
\end{equation*}
is bounded (see, e.g., \cite{adams2003sobolev,bourdaud2022introduction}). Namely, \( \|C_{\ff_u}\|_{\cH \to \cH} \leq R_u < \infty \). Therefore its adjoint \( C_{\ff_u}^\ast \) is also bounded. Note that on \( \cH_0 \), we have
\begin{equation*}
\langle h, \widetilde K_u \psi_x \rangle_\cH
= \langle h, \psi_{\ff_u(x)} \rangle_\cH 
= h \circ \ff_u(x)
= \langle C_{\ff_u} h, \psi_x \rangle_\cH
= \langle h, C_{\ff_u}^\ast \psi_x \rangle_\cH,
\quad \forall h \in \cH_0, \; x \in \cX.
\end{equation*}
Thus,
\begin{equation*}
\| \widetilde K_u h \|_{\cH}
= \| C_{\ff_u}^\ast h \|_{\cH}
\leq R_u \| h \|_\cH.
\end{equation*}
Therefore, condition~\eqref{eq:prekoopman-cond} holds.
\end{example}
Given the above  assumption, we can  extend the operator $\widetilde K_u$ to the whole RKHS $\cH$. 
\begin{proposition}
\label{prop:linear-extension-2}
Let $\widetilde K_u: \cH_0 \to \cH$ be defined as in Proposition \ref{lem:linear-extension-H0}. Let Assumption \ref{ass:prekoopman-uniformly-bounded} hold, i.e., $\widetilde K_u$ be bounded. Then, there exists a unique linear extension $\cK_u:\cH\to\cH$ of $\widetilde K_u$. Such an extension is bounded in operator norm with the same constant as $\widetilde K_u$.
\end{proposition}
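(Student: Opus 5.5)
This is the standard bounded linear transformation (BLT) extension argument, using the density of $\cH_0$ in $\cH$ and the completeness of $\cH$.

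\textbf{Definition on limits.} Fix $u\in\cU$ and let $h\in\cH$. Since $\overline{\cH_0}=\cH$, pick a sequence $(h_n)\subset\cH_0$ with $h_n\to h$ in $\cH$. By linearity of $\widetilde K_u$ (Proposition \ref{lem:linear-extension-H0}) and Assumption \ref{ass:prekoopman-uniformly-bounded},
\begin{equation*}
\|\widetilde K_u h_n-\widetilde K_u h_m\|_\cH\le R_u\|h_n-h_m\|_\cH .
\end{equation*}
Hence $(\widetilde K_u h_n)$ is Cauchy in $\cH$, and by completeness it converges. We set $\cK_u h\coloneqq\lim_n \widetilde K_u h_n$.

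\textbf{Well-definedness.} If $(h_n')$ is another approximating sequence, the same inequality applied to $h_n-h_n'\to0$ shows that the two limits coincide. Taking constant sequences gives $\cK_u|_{\cH_0}=\widetilde K_u$, so $\cK_u$ is an extension.

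\textbf{Linearity and norm bound.} Linearity follows by passing to the limit in $\widetilde K_u(a h_n+b g_n)=a\widetilde K_u h_n+b\widetilde K_u g_n$. For the bound, continuity of the norm gives
\begin{equation*}
\|\cK_u h\|_\cH=\lim_n\|\widetilde K_u h_n\|_\cH\le R_u\lim_n\|h_n\|_\cH=R_u\|h\|_\cH .
\end{equation*}
Thus $\|\cK_u\|\le R_u$. Since $\cK_u$ extends $\widetilde K_u$, its norm is at least that of $\widetilde K_u$, so the two norms are equal.

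\textbf{Uniqueness.} If $L:\cH\to\cH$ is another bounded linear extension, then $L-\cK_u$ is continuous and vanishes on the dense set $\cH_0$, so it vanishes on $\cH$.

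\textbf{Main point of care.} Uniqueness must be understood within the class of \emph{bounded} (continuous) extensions. Without continuity, Hamel-basis constructions produce non-unique linear extensions, so the statement's ``unique'' should be read accordingly. The other subtle point is independence of the approximating sequence, which is exactly where the uniform bound of Assumption \ref{ass:prekoopman-uniformly-bounded} is used.
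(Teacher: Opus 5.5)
Your proof is correct and follows the paper's route: the paper cites the bounded linear extension theorem (Kreyszig, Sec.~2.7) together with the density $\overline{\cH_0}=\cH$, and you have proved that theorem directly. Your caveat that uniqueness holds only among bounded (continuous) extensions is a valid sharpening of the paper's wording, which claims uniqueness among all linear extensions.
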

The proof of this result is reported in Appendix \ref{app:proof-linear-extension-2}.

In the remainder of the paper we will assume Assumption \ref{ass:prekoopman-uniformly-bounded} to hold,  so that by the above result for each $u\in \cU$, there exists a unique linear extension  $\cK_u:\cH\to\cH$ ; bounded by $R_u$.
We add two remarks.

\begin{remark}[Defining Koopman operators]
\label{remark:concurrent_definitions_koopman}
In the system identification literature, Koopman operators on RKHS are often introduced as restriction of a certain operator  $\mathcal A: \cL_\pi^2 \to \cL_\pi^2$ for an invariant measure $\pi$ (e.g., for ergodic dynamics), see, e.g., \cite{kostic2022learning,kostic2023sharp}. Here, we deliberately avoid assuming existence/knowledge of $\pi$ and instead work directly with an RKHS lifting.
\end{remark}

\begin{remark}[Assumptions on the dynamics]
    Note that our lifted model \eqref{eq:exact-lifted-dynamics} does not rely on a specific structure of $\ff$, other than the boundedness Assumption \ref{ass:prekoopman-uniformly-bounded}. The further assumption on the finite nature of the control set allows to construct the switching dynamics \eqref{eq:exact-lifted-dynamics}. This can be contrasted with other state-of-the-art approaches assuming that the system of interest admits a linear time-invariant representation in $\cH$, i.e., a model of the form \begin{equation}
        z_{t+1} = \mathcal Az_t + \cB u_t
    \end{equation} for $\mathcal A$ and $\cB$ being linear operators\cite{mezic2021koopman,caldarelli2025linear,heeg2026limitations}. Besides these works, other approaches assume a bilinear structure of $\ff$, i.e., 
    \begin{equation}
        \ff(x_t, u_t) = g_1(x_t) + g_2(x_t)u_t
    \end{equation} for suitable functions $g_1$ and $g_2$, which transfers to a similar bi-linear structure in the lifted state space \cite{worthmann2024data,bold2024data}.
\end{remark}

\subsection{Data-driven dynamics}
The operators $\cK_u$ previously introduced may be unknown in practice. Here we discuss how to estimate them by sampling the system's state, and solving a suitable learning problem. 

\paragraph*{Data collection} Fix $u\in\cU$ and a design distribution $\rho_u\in\mathcal{P}(\cX)$. We collect data by sampling
\begin{equation}
x_1,\dots,x_n \overset{\mathrm{i.i.d.}}{\sim}\rho_u\label{eq:sampling_rule}
\end{equation}
and seek a linear operator mapping $\psi_{x_i}\mapsto \psi_{\ff_u(x_i)}$, i.e., approximating the linear evolution of the lifted state described by \eqref{eq:exact-lifted-dynamics}. The choice of $\rho_u$ determines which regions of $\cX$ (hence which lifted states) are prioritized in identification. In particular, if $\ff_u$ admits an invariant measure and one aims for long-horizon prediction under control $u$, choosing $\rho_u$ close to that invariant distribution is a natural option. In absence of this information, choosing $\rho_u$ is part of the design. Common choices include, e.g., uniform sampling from a bounded region of the state space (cf.\ \cite{mezic2021koopman} and Section \ref{sec:experiments}).

\paragraph*{Regression problem} For each $u\in \cU$, once we sampled $n$ state values, we can define $\widehat \cK_u$ as the unique solution of the following regularized operator-valued nonparametric regression problem \cite{kostic2022learning,kostic2023sharp}:
\begin{equation}
\label{eq:emp-reg-risk}
\widehat \cK_u \coloneqq
\argmin_{W \in \HS(\cH)}
\left\{
\frac{1}{n} \sum_{i = 1}^n \bigl\| \psi_{\ff_u(x_i)} - W \psi_{x_i} \bigr\|_\cH^2 + \gamma_u \norm{W}_{\HS}^2
\right\},
\quad \gamma_u>0.
\end{equation}
Although $\cH$ may be infinite-dimensional,~\eqref{eq:emp-reg-risk} admits a closed-form solution via the representer theorem for a certain vector-valued kernel ridge regression \cite{micchelli2005learning,grunewalder2012conditional}, as shown in the following proposition.
\begin{proposition}[Closed-form solution of \eqref{eq:emp-reg-risk}]
\label{prop:closed_form_sol}
    For a given p.d.\ kernel $k$, a control $u\in\cU$, $n\in\mathbb N$, a dataset of states $\{x_1, \dots, x_n\}$ sampled according to \eqref{eq:sampling_rule}, and a state $x\in\cX$,
    define the following matrices
    \begin{equation*}
        H_{nn} \in \mathbb R^{n\times n},\ (H_{nn})_{i, j} \coloneqq k(x_i, x_j) \quad \text{and}\quad  H_{nx} \in \mathbb R^n,\ (H_{nx})_{i} \coloneqq k(x_i, x).
    \end{equation*}
    For 
    \begin{equation*}
        a \in \mathbb R^{n},\ a \coloneqq (H_{nn} + n\gamma_uI)^{-1} H_{nx},
    \end{equation*}
    we have that
    \begin{equation*}
        \widehat \cK_u \psi_x = \sum_{i=1}^n a_i \psi_{\ff_u(x_i)}.
    \end{equation*}
\end{proposition}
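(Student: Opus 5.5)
Write the empirical objective in operator form. Introduce the sampling operator $S:\cH\to\R^n$, $(Sh)_i=\langle h,\psi_{x_i}\rangle_\cH/\sqrt n$, with adjoint $S^*c=\frac{1}{\sqrt n}\sum_i c_i\psi_{x_i}$. Define analogously $Z:\cH\to\R^n$, $(Zh)_i=\langle h,\psi_{\ff_u(x_i)}\rangle_\cH/\sqrt n$. The empirical covariance is $C=S^*S=\frac1n\sum_i\psi_{x_i}\otimes\psi_{x_i}$ and the cross-covariance is $T=Z^*S=\frac1n\sum_i\psi_{\ff_u(x_i)}\otimes\psi_{x_i}$. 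The objective in \eqref{eq:emp-reg-risk} is then a strictly convex quadratic functional on the Hilbert space $\HS(\cH)$:
\begin{equation*}
J(W)=\frac1n\sum_i\|\psi_{\ff_u(x_i)}\|^2-2\langle W,T\rangle_{\HS}+\langle W C,W\rangle_{\HS}+\gamma_u\|W\|_{\HS}^2 .
\end{equation*}
This uses $\frac1n\sum_i\|W\psi_{x_i}\|^2=\operatorname{tr}(WCW^*)$ and $\frac1n\sum_i\langle \psi_{\ff_u(x_i)},W\psi_{x_i}\rangle=\langle W,T\rangle_{\HS}$. Strong convexity ($\gamma_u>0$) and continuity give existence and uniqueness of the minimizer. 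Setting the Fréchet derivative to zero yields $W(C+\gamma_u I)=T$, hence
\begin{equation*}
\widehat\cK_u=T(C+\gamma_u I)^{-1}=Z^*S(S^*S+\gamma_u I)^{-1}.
\end{equation*}
Since $T$ has finite rank, this operator is Hilbert–Schmidt, so it lies in the admissible class.

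Next, apply the push-through identity $S(S^*S+\gamma_u I)^{-1}=(SS^*+\gamma_u I)^{-1}S$. It is verified directly by multiplying $S(S^*S+\gamma_uI)=(SS^*+\gamma_uI)S$ on the left and right by the respective inverses, both of which exist because $\gamma_u>0$. Here $SS^*:\R^n\to\R^n$ is the matrix $\frac1n H_{nn}$, by the reproducing property $\langle\psi_{x_i},\psi_{x_j}\rangle=k(x_i,x_j)$. This gives
\begin{equation*}
\widehat\cK_u=Z^*\Bigl(\tfrac1n H_{nn}+\gamma_uI\Bigr)^{-1}S=\sqrt n\,Z^*(H_{nn}+n\gamma_u I)^{-1}S .
\end{equation*}

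Finally, evaluate at $\psi_x$. We have $S\psi_x=H_{nx}/\sqrt n$ by the reproducing property, so $\widehat\cK_u\psi_x=Z^*(H_{nn}+n\gamma_uI)^{-1}H_{nx}=Z^*a$. Then $Z^*a=\frac1{\sqrt n}\sum_i a_i\psi_{\ff_u(x_i)}$, which is off by a factor $\sqrt n$. I therefore expect the only real obstacle to be bookkeeping of the normalization constants. To avoid it, I will use unnormalized $S,Z$ (no $1/\sqrt n$) throughout. The normal equation then reads $W(S^*S+n\gamma_uI)=Z^*S$, and push-through gives exactly $Z^*(H_{nn}+n\gamma_uI)^{-1}H_{nx}=\sum_i a_i\psi_{\ff_u(x_i)}$, as claimed.

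A secondary point is justifying differentiation on $\HS(\cH)$. Alternatively, a representer argument will do: any $W$ can be split as $W=WP+W(I-P)$, with $P$ the projection onto $\operatorname{span}\{\psi_{x_i}\}$. The second term does not affect the data term and only increases the norm, so the minimizer is supported on the finite span. This reduces the problem to a finite-dimensional ridge regression.
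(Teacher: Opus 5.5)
Your proposal is correct and follows the paper's route. You use first-order optimality on $\HS(\cH)$ to get $\widehat\cK_u=\widehat C(\widehat\Sigma+\gamma_uI)^{-1}$, then the push-through identity with the sampling operators and the reproducing property. The $\sqrt n$ discrepancy you report comes from a slip in your own algebra: $(\tfrac1n H_{nn}+\gamma_uI)^{-1}=n\,(H_{nn}+n\gamma_uI)^{-1}$, not $\sqrt n\,(H_{nn}+n\gamma_uI)^{-1}$. With that factor the normalized computation already gives $\sqrt n\,Z^*a=\sum_i a_i\psi_{\ff_u(x_i)}$, so switching to unnormalized operators is fine but unnecessary.
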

    The proof of this result is given in Appendix \ref{app:closed_form_sol}.

Note that even though the dynamics of interest are deterministic, we choose to use of Tikhonov regularization in \eqref{eq:emp-reg-risk} to prevent the calculation of the solution from becoming numerically unstable \cite{giannakis2023learning}. Moreover, in our experiments in Section \ref{sec:experiments}, we will use the approximate, finite-dimensional feature map given by random features \cite{rahimi2007random}.

\paragraph*{Lifted surrogate dynamics} Analogously to the exact case, given the model estimates $\{\widehat\cK_u\}_{u\in\cU}$ and an initial state $x_0$, we define the corresponding lifted dynamics by
\begin{equation*}
\hat z_0 \coloneqq \psi_{x_0},
\end{equation*}
and, for all $t\geq 0$,
\begin{equation}
\hat z_{t+1} = \widehat \cK_{u_t}\hat z_t .
\label{eq:surrogate-lifted-dynamics}
\end{equation}
These linear switching dynamics act as a \emph{surrogate} for the model \eqref{eq:exact-lifted-dynamics}, and can be used to perform approximate predictions of the true system's evolution over time. Similarly to the exact case, for notational convenience, we introduce the following function to refer to model \eqref{eq:surrogate-lifted-dynamics}, $\hat f:\cH \times \cU\to\cH$:
\begin{equation*}
    \hat f(z, u) \coloneqq \widehat \cK_uz.
\end{equation*}
Before moving forward discussing how the above surrogate models can be used for control, we first study how well we can expect them to estimate the exact models, provided finite data.

\subsection{Learning guarantees} 
\label{subsec:koopman_rates}
In this section, we study the estimation error incurred in estimating  $f$ in \eqref{eq:exact-lifted-dynamics} by the surrogate data driven model $\hat f$ in \eqref{eq:surrogate-lifted-dynamics}. Towards this end, we need several assumptions. 

The first assumption is standard in the context of statistical learning theory with kernels, see e.g., \cite{kostic2023sharp}.
\begin{assumption}[Uniformly bounded kernel]
\label{ass:bounded-kernel}
There exists $\kappa<\infty$ such that $\|\psi_x\|_\cH\le \kappa$ for all $x\in\cX$.
\end{assumption}
This assumption is fulfilled, e.g., by the Gaussian kernel introduced in \eqref{eq:gaussian_kernel} and more general by translation invariant kernels.

The next assumption strengthens the property of the Koopman operators by requiring a bounded Hilbert–Schmidt norm rather than just a bounded operator norm.

\begin{assumption}[Well-specified model]
\label{ass:koopman-well-specified}
For all $u\in\cU$, $\lVert\cK_u\rVert_\HS <\infty$.
\end{assumption}
We assume a further condition, which can be interpreted as quantifying the alignment of the target operator \( \cK_u \) with the kernel-induced geometry, and the choice of design distributions \( \rho_u \).
\begin{assumption}[Koopman source condition]
\label{ass:koopman-source-condition}
For every \( u \in \cU \), define the operator \( \Sigma_u \in \HS(\cH) \) by
\begin{equation*}
\Sigma_u \coloneqq \int \psi_x \otimes \psi_x \, \rho_u(dx).
\end{equation*}
Then, there exists $r_u \in (1/2,1]$ such that
\begin{equation*}
\norm{\cK_u\,\Sigma_u^{1/2-r_u}}_\HS =: G_u < \infty.
\end{equation*}
\end{assumption}
Note that a larger \( r_u \) corresponds to smoother/easier identification and therefore yields faster rates, as we will show in the following. 
The two latter are assumptions are inspired by analogous conditions in supervised learning \cite{meunier2024optimal}, and have been previously used to study learning Koopman operators \cite{kostic2022learning,kostic2023sharp}.

Under the above assumptions, we can state our main result about  the estimation of the exact model $f$ by the data driven surrogate $\widehat f$  learned from  finitely many samples.
\begin{theorem}[Koopman identification rate]
\label{thm:koopman-sample-rate}
Let Assumptions~\ref{ass:bounded-kernel}, \ref{ass:koopman-well-specified}, and \ref{ass:koopman-source-condition} hold. Define \( r \coloneqq \min_{u\in\cU} r_u \). For every \( u \in \cU \) set
\[
\gamma_u = c_u \, n^{-1/(2r_u+1)}
\quad \text{for some } c_u>0.
\]
Let the family of operators \( \left\{ \widehat \cK_{u} \right\}_{u \in \cU} \) be given by solving~\eqref{eq:emp-reg-risk} for every \( u \in \cU \). Then, for
any $\delta\in(0,1)$, with probability at least $1-\delta$,
\begin{equation*}
\bigl\|\hat f(\psi_x,u)-f(\psi_x,u)\bigr\|_\cH
\;\lesssim\;
\log \! \left(2|\cU| / \delta\right) \cdot \kappa n^{-\frac{2r-1}{4r+2}}, \ \forall x \in \cX,\ \forall u \in \cU.
\end{equation*}
\end{theorem}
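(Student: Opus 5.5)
The plan is to reduce the pointwise bound to an operator-norm bound. For every $x\in\cX$ and $u\in\cU$,
\[
\norm{\hat f(\psi_x,u)-f(\psi_x,u)}_\cH=\norm{(\widehat\cK_u-\cK_u)\psi_x}_\cH\le \kappa\norm{\widehat\cK_u-\cK_u},
\]
using Assumption~\ref{ass:bounded-kernel}. It therefore suffices to prove, for each fixed $u$ and with probability at least $1-\delta/|\cU|$, that
\[
\norm{\widehat\cK_u-\cK_u}\lesssim \log(2|\cU|/\delta)\, n^{-\frac{2r_u-1}{4r_u+2}}.
\]
A union bound over the finite set $\cU$ then gives the claim. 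Since $n^{-\frac{2r_u-1}{4r_u+2}}\le n^{-\frac{2r-1}{4r+2}}$ because the exponent is increasing in $r_u$, the worst-case rate $r=\min_u r_u$ appears in the final statement.

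For fixed $u$, I first write the closed form of \eqref{eq:emp-reg-risk}. Let $\widehat\Sigma_u=\frac1n\sum_i\psi_{x_i}\otimes\psi_{x_i}$ and $\widehat T_u=\frac1n\sum_i\psi_{\ff_u(x_i)}\otimes\psi_{x_i}$. Then $\widehat\cK_u=\widehat T_u(\widehat\Sigma_u+\gamma_u I)^{-1}$, consistent with Proposition~\ref{prop:closed_form_sol}. The key observation is that, because the dynamics are deterministic and $\cK_u\psi_x=\psi_{\ff_u(x)}$, we have exactly $\widehat T_u=\cK_u\widehat\Sigma_u$. Hence
\[
\widehat\cK_u-\cK_u=\cK_u\widehat\Sigma_u(\widehat\Sigma_u+\gamma_u I)^{-1}-\cK_u=-\gamma_u\cK_u(\widehat\Sigma_u+\gamma_u I)^{-1}.
\]
There is no noise/variance term in the usual sense; the whole error is a regularization bias evaluated at the empirical covariance. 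I insert the population covariance and use the source condition (Assumption~\ref{ass:koopman-source-condition}) to write $\cK_u=B_u\Sigma_u^{r_u-1/2}$ with $\norm{B_u}_\HS=G_u$. This gives
\[
\norm{\widehat\cK_u-\cK_u}\le G_u\,\gamma_u\norm{\Sigma_u^{r_u-1/2}(\widehat\Sigma_u+\gamma_uI)^{-1}}.
\]
I then factor through $(\Sigma_u+\gamma_u I)^{\pm 1/2}$:
\[
\Sigma_u^{r_u-1/2}(\widehat\Sigma_u+\gamma_u)^{-1}
=\bigl[\Sigma_u^{r_u-1/2}(\Sigma_u+\gamma_u)^{-1/2}\bigr]\,\bigl[(\Sigma_u+\gamma_u)^{1/2}(\widehat\Sigma_u+\gamma_u)^{-1/2}\bigr]\,(\widehat\Sigma_u+\gamma_u)^{-1/2}.
\]
Since $r_u\le 1$, the first factor is bounded by $\gamma_u^{r_u-1}$ by spectral calculus. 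The last factor is bounded by $\gamma_u^{-1/2}$. The middle factor is bounded by a constant on a high-probability event. This would give $\norm{\widehat\cK_u-\cK_u}\lesssim G_u\gamma_u^{r_u-1/2}$, which is even better than claimed.

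The main obstacle is controlling the middle factor, i.e.\ showing $\norm{(\Sigma_u+\gamma_u)^{-1/2}(\Sigma_u-\widehat\Sigma_u)(\Sigma_u+\gamma_u)^{-1/2}}\le 1/2$ with probability $1-\delta/|\cU|$. I would use a Bernstein inequality for sums of i.i.d.\ self-adjoint Hilbert–Schmidt operators. The summands $\psi_x\otimes\psi_x$ have norm at most $\kappa^2$ by Assumption~\ref{ass:bounded-kernel}. This yields a deviation of order $\log(2|\cU|/\delta)\,\kappa^2/(\gamma_u\sqrt n)$ (or $\kappa^2/(\gamma_u n)$ plus effective-dimension terms), which is below $1/2$ only when $\gamma_u\gtrsim n^{-1/2}\log(\cdot)$. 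With $\gamma_u=c_un^{-1/(2r_u+1)}$ and $r_u>1/2$, we have $1/(2r_u+1)<1/2$, so this holds for $n$ large.

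If this multiplicative route turns out to be too delicate, I will fall back on an additive split:
\[
\gamma_u\cK_u(\widehat\Sigma_u+\gamma_u)^{-1}=\gamma_u\cK_u(\Sigma_u+\gamma_u)^{-1}+\gamma_u\cK_u(\widehat\Sigma_u+\gamma_u)^{-1}(\Sigma_u-\widehat\Sigma_u)(\Sigma_u+\gamma_u)^{-1}.
\]
The first term is bounded by $G_u\gamma_u^{r_u-1/2}$. For the second, I use $\norm{\Sigma_u-\widehat\Sigma_u}_\HS\lesssim\kappa^2\log(2/\delta)n^{-1/2}$ from a Hoeffding/Pinelis inequality in $\HS(\cH)$, together with $\norm{\cK_u(\widehat\Sigma_u+\gamma_u)^{-1}}\le\norm{\cK_u}_\HS\gamma_u^{-1}$ (Assumption~\ref{ass:koopman-well-specified}) and the sharper bound $\norm{\Sigma_u^{r_u-1/2}(\Sigma_u+\gamma_u)^{-1}}\le \gamma_u^{r_u-3/2}$ on the rightmost factor. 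Balancing the two terms against the prescribed $\gamma_u$ should produce the exponent $\frac{2r_u-1}{4r_u+2}=\frac12\cdot\frac{2r_u-1}{2r_u+1}$ with the logarithmic factor in front. I expect the stated rate to come from this cruder square-root balance. Finally, the constants $\kappa$, $G_u$, $c_u$ and $\norm{\cK_u}_\HS$ are absorbed into $\lesssim$, and the union bound over $u\in\cU$ completes the proof.
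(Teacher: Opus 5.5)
Your reduction $\norm{(\widehat\cK_u-\cK_u)\psi_x}_\cH\le\kappa\norm{\widehat\cK_u-\cK_u}$, the union bound over $\cU$, and the monotonicity of $\frac{2r_u-1}{4r_u+2}$ in $r_u$ all match the paper. Your fallback is essentially the paper's proof. The paper splits the error as $(\widehat\cK_\gamma-\cK_\gamma)+(\cK_\gamma-\cK)$ with $\cK_\gamma=\cK\Sigma(\Sigma+\gamma I)^{-1}$. It bounds the bias $\cK-\cK_\gamma=\gamma\cK(\Sigma+\gamma I)^{-1}$ by $G\gamma^{r-1/2}$ through the spectral expansion. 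It bounds the sampling term by $\gamma^{-1}$ times a Pinelis bound on $\frac1n\sum_i(\xi(x_i)-\E\xi)$, with $\xi(x)=(\psi_{\ff(x)}-\cK_\gamma\psi_x)\otimes\psi_x$, using only $\norm{\xi(x)}_{\HS}\le\kappa^2(1+R)$. The prescribed $\gamma$ then balances $\gamma^{r-1/2}$ against $\gamma^{-1}n^{-1/2}$.

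Your main route is genuinely different. You use $\cK_u\psi_x=\psi_{\ff_u(x)}$ at the empirical level, whereas the paper only uses the population identity $C=\cK\Sigma$. This gives the exact identity $\widehat\cK_u-\cK_u=-\gamma_u\cK_u(\widehat\Sigma_u+\gamma_uI)^{-1}$, so the entire error is a bias evaluated at the empirical covariance. The multiplicative comparison of $\widehat\Sigma_u$ with $\Sigma_u$ then gives $\sqrt2\,G_u\gamma_u^{r_u-1/2}$. The paper's argument is shorter and valid for every $n$, needing only a Hoeffding-type inequality. Yours shows that the prescribed $\gamma_u$ is dictated by a crude variance bound. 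Indeed, the paper's sampling term equals $(\cK-\cK_\gamma)(\widehat\Sigma-\Sigma)\widehat\Sigma_\gamma^{-1}$, and the small factor $\cK-\cK_\gamma$ is thrown away when bounding $\xi$. Consequently, taking $\gamma_u\asymp\kappa^2\sqrt{\log(2|\cU|/\delta)/n}$ would already give order $n^{-(2r-1)/4}$ up to logarithms. This is faster than the stated rate whenever $r>1/2$, contrary to the sharpness suggested after the theorem.

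There are three points to tighten.

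First, the multiplicative step needs, e.g., $\gamma_u\ge4\kappa^2\sqrt{2\log(2|\cU|/\delta)/n}$, which holds only above a $\delta$-dependent sample size. Below that size, use $\norm{\widehat\cK_u-\cK_u}=\gamma_u\norm{\cK_u(\widehat\Sigma_u+\gamma_uI)^{-1}}\le\norm{\cK_u}$ together with $1<4\kappa^2c_u^{-1}\sqrt{2\log(2|\cU|/\delta)}\,n^{-\frac{2r_u-1}{4r_u+2}}$. This is where the logarithm in the statement enters your route.

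Second, with the prescribed $\gamma_u$ one has $\gamma_u^{r_u-1/2}\asymp n^{-\frac{2r_u-1}{4r_u+2}}$. So your bound is not "even better" in the exponent; it is only free of the logarithm in its leading term.

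Third, in your fallback the factor $\Sigma_u^{r_u-1/2}$ is attached to $\cK_u$ on the left, so it cannot be paired with the rightmost $(\Sigma_u+\gamma_uI)^{-1}$. Simply bound that factor by $\gamma_u^{-1}$. This gives $\norm{\cK_u}\gamma_u^{-1}\norm{\Sigma_u-\widehat\Sigma_u}_{\HS}$, which is exactly the paper's sampling term and already sufficient for the claimed rate. If you want to exploit the source condition there, use the other ordering of the resolvent identity, which is exactly the paper's term.
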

    The proof of this result can be found in Appendix~\ref{app:proof_of_learning_rate}.

Our bound shows that, in the fastest regime, the one-step-ahead prediction error of the approximated Koopman model scales at a rate in $\mathcal O(n^{-1/6})$. This bound  matches analogous results in supervised learning \cite{caponnetto2007optimal}, hence suggesting that the obtained estimates are sharp. Compared to other analyses performed in Koopman operator learning with RKHSs, our result is derived from different assumptions on the system dynamics, e.g., we do not assume stationary \cite[Theorem 3]{kostic2022learning} (cf.\ Remark \ref{remark:concurrent_definitions_koopman}). Other quantitative results related to Koopman operator learning can be found in \cite{korda2018convergence,mezic2022numerical,bevanda2023koopman,bevanda2026nonparametric,nuske2023finite}, see also the survey \cite[Section 2.4]{strasser2026overview}. Note that, as it can be expected since we perform a union bound, our bound grows logarithmically in the size of the control set $\cU$. To conclude, note that Theorem~\ref{thm:koopman-sample-rate} implies that the one-step mismatch vanishes as $n\to\infty$, meaning that, under suitable assumptions, the model \eqref{eq:exact-lifted-dynamics} can be learned to an arbitrary level of accuracy, by sampling enough points. 

Provided with the above estimates we go back discussing how the Koopman operators and their data driven surrogate models can be used for control. We will see that such estimates will be needed to assess the robustness of the proposed approach. 

\section{Controller design and performance}
\label{sec:optimal_control}
As mentioned before, a linear switching model \eqref{eq:exact-lifted-dynamics} based on the exact Koopman operators $\cK_u$'s,  can be used to define an infinite-horizon optimal control problem, with the ultimate goal of controlling the dynamical system in closed-loop. 

As the optimal control problem is intractable, we choose to solve it approximately with MPC. In this section, we will formalize such a receding-horizon control strategy, and theoretically analyze its performance w.r.t.\ the infinite-horizon controller. Furthermore, we will consider an approximation of such an MPC algorithm, in which the inaccessible exact dynamics \eqref{eq:exact-lifted-dynamics} are replaced by \eqref{eq:surrogate-lifted-dynamics} to make predictions. In this case, we show that the use of MPC causes a degradation of the controller's performance that is related to the approximation error from Theorem \ref{thm:koopman-sample-rate}.

\subsection{Optimal control problem} 
In order to control the dynamics of interest in closed-loop, we formulate an optimal control problem. To start with, let us introduce the stage cost $\ell \colon \mathbb N_0\times\mathcal H\times \mathcal U\to \R_{\geq 0}$. Note that our stage cost $\ell$ explicitly depends on time (e.g., in the form of a discount factor \cite{anand2024optimality}). This, in turn, allows to filter out oscillations at infinity, which may happen due to our control set being finite \cite{aguilera2013stability}.

For $T\in\mathbb N_0\cup \{\infty\}$, we introduce the  performance index $\cJ_T: \mathbb N_0\times \cH \times \cU^T\to \mathbb R_{\geq 0}$: 
\begin{equation}
    \mathcal J_T(t, z, \uf)\coloneqq\sum_{j=t}^{T+t-1}\ell(j, z_j, u_j),\label{eq:def_perf_indx}
\end{equation}
where $z_{j+1} = f(z_j, u_j), u_j = \{\uf\}_j$, $z_t=z$. 

We can further define the value function $V_T(t, z):\mathbb N_0\times \cH \to \mathbb R_{\geq 0}$ associated to \eqref{eq:def_perf_indx} as 
\begin{equation}
    V_T(t, z) \coloneqq \inf_{\uf\in \mathcal U^T}\mathcal J_T(t, z, \uf).
\label{eq:def-vT}
\end{equation}
If $T<\infty$, such an infimum is computed over a finite set and is therefore attained. If $T=\infty$, we assume that such an infimum is attained, as done, e.g., in \cite{grune2008infinite}. We therefore aim at solving the following optimal control a, for $x_0\in\cX$, $z_0 = \psi_{x_0} $:
\begin{equation}
    	 \min_{\uf\in \mathcal U^\infty}\mathcal J_\infty(0, z_0, \uf).\label{eq:prob_we_solve}
\end{equation}
\begin{remark}[Lifted vs.\ classical optimal control]
\label{remark:originality_of_ocp}
Note that our optimal control problem \eqref{eq:prob_we_solve} is directly defined in the lifted state space, rather than the original one. In contrast, classical optimal control aims at solving, problem \eqref{eq:ocp_intro} subject to the nonlinear dynamics $x_{t+1} = \mathfrak f(x_t, u_t),\ x_0\in\cX$. 
Nonetheless, given a problem of the form \eqref{eq:ocp_intro}, for $t\in\mathbb N_0$, and $u_t\in\cU$, we can transform it to a problem in the lifted state space by choosing
\begin{equation*}
    \ell(t, z_t, u_t) = g(t, x_t, u_t),
\end{equation*}
where $x_t$ is such that $z_t = \psi_{x_t}$, and constraining with the lifted dynamics $z_0 = \psi_{x_0}$, $z_{t+1} = f(z_t, u_t)$. 
Note that this approach requires inverting the feature map $\psi$.
\end{remark}
\begin{remark}[Inversion of the lifting map]
If we define the stage cost $\ell$ directly in lifted state space, we do not necessarily need to use an inverse map $\cH\to\cX$, in contrast to other Koopman-based optimal control strategies that require an inversion of $\psi$ \cite{korda2018linear,caldarelli2025linear} (cf.\ Remark \ref{remark:originality_of_ocp} and Section \ref{sec:experiments}).
\end{remark}
\begin{algorithm}[t]
\caption{Exact MPC algorithm}\label{alg:mpc}
\begin{algorithmic}
\Require Initial state $x_0\in\cX$, switching flow map $f:\cH\times \cU \to \cH$.
\Ensure Receding horizon control sequence $
    \umpc \coloneqq \{\mumpc(0, z_0), \mumpc(1, z_1), \dots \}$.
    \State Initialize $z_0\leftarrow\psi_{x_0}.$
\For{$t\in\mathbb N$}
\State Compute 
	\begin{equation} \uf^{*}_t \in\arg \min_{\uf\in \mathcal U^T}\mathcal J_T(t, z_t, \uf).\label{eq:mpc_ocp}
    \end{equation}
\State $\mu_T(t, z_t )\leftarrow\{\uf^{*}_t\}_0$. 
\State $z_{t+1}\leftarrow f(z_t, \mu_T(t, z_t ))$.
\EndFor
\end{algorithmic}
\end{algorithm}
\subsection{Exact MPC algorithm} Note that problem \eqref{eq:prob_we_solve} is in general intractable, since it entails a minimization over an infinitely long sequence of controls, i.e., switches among dynamical models. To deal with this issue, we solve problem \eqref{eq:prob_we_solve} with MPC. In this subsection, we employ the exact dynamics \eqref{eq:exact-lifted-dynamics} to make predictions, and repeatedly perform the receding-horizon steps reported, for completeness, in Algorithm \ref{alg:mpc}. 

By applying Algorithm \ref{alg:mpc}, we obtain an infinitely long sequence of controls $\umpc\in \cU^\infty$, depending on the implicit feedback law $\mu_T:\mathbb N_0\times \cH \to \cU$ defined in Algorithm \ref{alg:mpc}:
\begin{equation}
    \umpc \coloneqq \{\mumpc(0, z_0), \mumpc(1, z_1), \dots \},\label{eq:mpc_ctrl_seq}
\end{equation}
where the states are visited by playing the receding horizon controls. 
\subsubsection{Performance bound}
In order to assess the performance of Algorithm \ref{alg:mpc}, we
compare the related cost $\cJ_\infty(0, z_0, \umpc)$, with the cost of an optimal control sequence $\uf^* \in \arg\min_{\uf\in \mathcal U^\infty}\mathcal \cJ_\infty(0, z_0, \uf)$. Recall that, by definition, $J_\infty(0, z_0, \uf^*) = V_\infty(0, z_0)$.
This comparison is a standard way to assess the performance of MPC control strategies, see, e.g., \cite{grune2008infinite,grune2015robustness}, and allows to quantify the so-called \emph{sub-optimality gap} we incur when using MPC to approximate an optimal control law.

To derive error bounds, we make the following controllability assumption (cf.\ \cite[Proposition 4.7]{grune2008infinite}).
\begin{assumption}[Stage cost controllability]
\label{ass:stage_cost_ctrl}
    Let $\ell: \mathbb N_0 \times \cH \times \cU\to \mathbb R_{\geq 0}$ be the stage cost. Then, $\exists \lambda \in \ell_1$ such that, $\forall t\in\mathbb N_0$, $\forall x\in\cX$ and $ \forall u \in \cU$, $\exists \bar u \in \cU^\infty$ such that, for $j > t$, 
    \begin{equation*}
        \ell(j, \bar z_j, \bar u_j)\leq \lambda_j\ell(t, z, u).
    \end{equation*}
    Here, $\bar z_{j+1} = f(\bar z_j, \bar u_j)$, $\bar z_t = z=\psi_x$, $\bar u_j = \{\bar u\}_j$, $\lambda_j = \{\lambda\}_j$.
\end{assumption}
The above assumption requires the existence of a control sequence that reduces the value of the stage cost at a sufficiently fast rate. Note that, compared to similar assumptions appearing in the literature, see e.g. \cite[Proposition 4.7]{grune2008infinite}, our Assumption \ref{ass:stage_cost_ctrl} couples the effect of time on the stage cost (e.g., in the form of a discount factor) with the classic controllability of the stage cost to $0$. This is a natural consequence of using a time-varying stage cost, to render the optimal control problem tractable, in presence of finite actions \cite{aguilera2013stability}.

We are now ready to discuss the sub-optimality gap caused by the MPC algorithm.
\begin{theorem}[Sub-optimality of Algorithm \ref{alg:mpc}]
\label{thm:suboptimality_gap}
    Let Assumption \ref{ass:stage_cost_ctrl} hold, let $C\coloneqq \sum_{j=0}^{\infty}\lambda_j$ and let $\umpc$ be defined as in \eqref{eq:mpc_ctrl_seq}.  Assume the length of the predictive horizon $T$ fulfills the following inequality:
\begin{equation*}
        T\geq \frac{2\ln C}{\ln(C) - \ln(C-1)}.
\end{equation*}
Then, for $\alpha\coloneqq 1 - \frac{{(C - 1)}^T}{C^{T-2}}$, and $x_0\in\cX$, $z_0=\psi_{x_0}$, we have that
    \begin{equation*}
        \frac{\cJ_\infty(0, z_0, \umpc)- V_\infty(0, z_0)}{V_\infty(0, z_0)}\leq \frac{1-\alpha}\alpha\leq \frac{\left(\frac{C-1}{C}\right)^T}{C^2-1}.
    \end{equation*}
\end{theorem}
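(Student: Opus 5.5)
The argument follows the relaxed dynamic programming approach of Grüne--Rantzer (cf.\ \cite{grune2008infinite}), adapted to the time-varying stage cost. The central step is a relaxed dynamic programming inequality along the closed loop:
\begin{equation*}
V_T(t+1, z_{t+1}) + \alpha\, \ell(t, z_t, \mumpc(t,z_t)) \le V_T(t, z_t), \qquad \forall t\in\mathbb N_0,
\end{equation*}
where $z_{t+1}=f(z_t,\mumpc(t,z_t))$. Summing this inequality over $t=0,\dots,N-1$ and using $V_T\ge 0$ gives $\alpha\sum_{t=0}^{N-1}\ell(t,z_t,\mumpc(t,z_t))\le V_T(0,z_0)$. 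Letting $N\to\infty$ yields $\alpha\,\cJ_\infty(0,z_0,\umpc)\le V_T(0,z_0)\le V_\infty(0,z_0)$. The last inequality holds because $\ell\ge0$, so truncating any infinite sequence can only lower the cost. This gives $(\cJ_\infty-V_\infty)/V_\infty\le (1-\alpha)/\alpha$.

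To establish the inequality, I first use Assumption \ref{ass:stage_cost_ctrl} to bound value functions by stage costs. Starting at any $(t,z)$ with $z=\psi_x$ and any first control $u$, the assumed sequence gives
\begin{equation*}
V_k(t,z)\le \ell(t,z,u)+\sum_{j>t}\lambda_j\,\ell(t,z,u)\le C\,\ell(t,z,u).
\end{equation*}
Since $u$ is arbitrary, $V_k(t,z)\le C\min_u \ell(t,z,u)$ for every horizon $k$. Here I index $\lambda$ so that its tail sums are bounded by $C$, which needs care given the shift by $t$. Every reachable $z_t$ equals $\psi_{x_t}$ by \eqref{eq:prekoopman}, so the assumption applies along trajectories.

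Next I apply the standard Grüne--Rantzer induction along the optimal open-loop MPC trajectory $\{z^*_{t+j}\}_{j=0}^{T-1}$ computed at time $t$. Let $\ell_j$ be the stage cost at step $j$ of this trajectory. The tail of the optimal sequence is optimal for the shortened horizon, so
\begin{equation*}
\sum_{j=k}^{T-1}\ell_j = V_{T-k}(t+k,z^*_{t+k})\le C\,\ell_k .
\end{equation*}
This implies $\sum_{j\ge k+1}\ell_j\le (C-1)\ell_k$. Iterating, the tail sums decay geometrically by the factor $(C-1)/C$, giving
\begin{equation*}
\ell_{T-1}\le \Big(\tfrac{C-1}{C}\Big)^{T-2}\frac{1}{C}\,V_T(t,z_t)
\end{equation*}
up to constants matching the stated $\alpha$. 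Since $z_{t+1}=z^*_{t+1}$, appending to the tail of the optimal sequence a control minimizing the next stage cost gives
\begin{equation*}
V_T(t+1,z_{t+1})\le V_T(t,z_t)-\ell_0+ C\,\ell_{T-1},
\end{equation*}
where the last term uses the controllability bound once more. Combining these bounds and tracking constants gives $V_T(t+1,z_{t+1})\le V_T(t,z_t)-\alpha\ell_0$ with $\alpha = 1-(C-1)^T/C^{T-2}$.

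The main obstacle is this bookkeeping: matching the exact exponents $T$ and $T-2$ and the time-shifted indexing of $\lambda$. The condition on $T$ guarantees $\alpha>0$: $T\ln\frac{C}{C-1}\ge 2\ln C$ is equivalent to $(C-1)^T\le C^{T-2}$. It remains to verify the final elementary inequality $(1-\alpha)/\alpha\le ((C-1)/C)^T/(C^2-1)$. Writing $q=((C-1)/C)^T$, we have $1-\alpha=C^2q$, and the inequality reduces to $C^2/(1-C^2q)\le 1/(C^2-1)$. I would check this using the horizon condition, and if it fails as stated, refine the constant in the telescoping estimate.
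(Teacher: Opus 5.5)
Your overall architecture matches the paper: a relaxed Lyapunov inequality, then telescoping, then $V_T\le V_\infty$. Your route to the Lyapunov inequality is genuinely different, and valid in principle. The paper bounds $V_{k+1}-V_k$ uniformly through the ratios $r_k$, a supremum over all $(s,z)$, with $r_2\le C-1$ and $r_{k+1}\le\frac{C-1}{C}r_k$. You instead argue pointwise along the open-loop optimal trajectory, using geometric decay of tail sums. Your version is more elementary. It also only invokes the controllability bound at points of $\Psi(\cX)$, where Lemma~\ref{lem:value-bound} actually holds; the paper's supremum ranges over all of $\cH$ and would need restricting to the forward-invariant set $\Psi(\cX)$.

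However, your bookkeeping as written does not produce the stated $\alpha$. The bound $\ell_{T-1}\le(\frac{C-1}{C})^{T-2}\frac1C V_T(t,z_t)$ does not follow from the recursion. The recursion gives $(\frac{C-1}{C})^{T-1}V_T(t,z_t)$; at $T=2$ your bound would need $(C-1)\ell_1\le\ell_0$, while only $\ell_1\le(C-1)\ell_0$ is available. Also, paying $C\ell_{T-1}$ for the appended control loses a factor $C/(C-1)$ in $1-\alpha$. The chain that works is as follows. Set $S_k\coloneqq\sum_{j=k}^{T-1}\ell_j=V_{T-k}(t+k,z^*_{t+k})\le C\ell_k$. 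This gives $S_1\le(C-1)\ell_0$ and $S_{k+1}\le\frac{C-1}{C}S_k$, hence $\ell_{T-1}=S_{T-1}\le\frac{(C-1)^{T-1}}{C^{T-2}}\ell_0$. Then use $u^*_1,\dots,u^*_{T-2}$ followed by an optimal two-step sequence from $(t+T-1,z^*_{t+T-1})$:
\[
V_T(t+1,z_{t+1})\le\sum_{j=1}^{T-2}\ell_j+V_2(t+T-1,z^*_{t+T-1})\le V_T(t,z_t)-\ell_0+(C-1)\ell_{T-1}.
\]
Combining the two bounds gives exactly $\alpha=1-(C-1)^T/C^{T-2}$.

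The real gap is your last step, and it cannot be closed. Your reduction is correct: with $q=(\frac{C-1}{C})^T$, the second inequality is equivalent to $C^2(C^2-1)\le 1-C^2q$. This forces $C^4-C^2<1$, i.e.\ $C<\sqrt{(1+\sqrt5)/2}\approx 1.27$. For example, $C=2$, $T=3$ satisfies the horizon condition, yet $\alpha=\frac12$ and $\frac{1-\alpha}{\alpha}=1$, while the right-hand side is $\frac1{24}$. Neither the horizon condition nor a sharper telescoping estimate can repair this. It is a purely algebraic comparison between $(1-\alpha)/\alpha$, with $\alpha$ fixed by the statement, and the right-hand side.

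The paper's own proof also stops at $\alpha\,\cJ_\infty(0,z_0,\umpc)\le V_T(0,z_0)\le V_\infty(0,z_0)$ and never addresses the second inequality. What is provable is the first inequality, where $\frac{1-\alpha}{\alpha}=\frac{C^2q}{1-C^2q}$; this still decays like $(\frac{C-1}{C})^T$. The second inequality should be flagged as an error in the statement.

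Finally, the horizon condition is equivalent to $(C-1)^T\le C^{T-2}$, i.e.\ $\alpha\ge 0$, not $\alpha>0$. At equality (e.g.\ $C=2$, $T=2$) we get $\alpha=0$ and cannot divide by it. You need the strict inequality; the paper's statement has the same defect.
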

    The proof is reported in Appendix \ref{sec:proof_exact_mpc_suboptimality}.

Theorem \ref{thm:suboptimality_gap} shows that the sub-optimality gap induced by MPC decays exponentially fast in the size of the predictive horizon $T$, justifying the truncation of the horizon in \eqref{eq:prob_we_solve} to a finite value, and the use of Algorithm \ref{alg:mpc}. As a consequence, depending on $C$, the predictive horizon $T$ (and consequently the number of iterations needed to solve \eqref{eq:mpc_ocp}) can be shrunk, to the point of allowing a brute-force (hence exact) solution of problem \eqref{eq:mpc_ocp}. This will be confirmed in our simulations in Section \ref{sec:experiments}.
\begin{remark}[Comparison with \cite{grune2008infinite}]
The resulting sub-optimality rate matches the one firstly derived in \cite{grune2008infinite} in the context of MPC with time-invariant stage cost. In this paper, we provide an original proof for this result, showing that the rate is indeed optimal under Assumption \ref{ass:stage_cost_ctrl}.
   \end{remark} 
\begin{algorithm}[t]
\caption{Inexact MPC algorithm}\label{alg:mpc_approx}
\begin{algorithmic}
\Require Initial state $x_0\in\cX$, inexact switching flow map $\hat f:\cH\times \cU \to \cH$, exact switching flow map $ f:\cH\times \cU \to \cH$.
\Ensure Receding horizon control sequence $
    \hat\uf^\mpc_T \coloneqq \{\hat \mu_T(0, z_0), \hat \mu_T(1, z_1), \dots \}$.
    \State Initialize $z_0\leftarrow\psi_{x_0}.$
\For{$t\in\mathbb N$}
\State Compute 
	\begin{equation} \hat\uf^{*}_t \in\arg \min_{\uf\in \mathcal U^T}\widehat{\mathcal J}_T(t, z_t, \uf).\label{eq:mpc_ocp_approx}
    \end{equation}
\State $\hat \mu_T(t, z_t )\leftarrow\{ \hat\uf^{*}_t\}_0.$ 
\State  $z_{t+1}\leftarrow f(z_t, \hat \mu_T(t, z_t ))$.
\EndFor
\end{algorithmic}
\end{algorithm}
\subsection{Approximate MPC algorithm}
Next,  we can  design an MPC strategy based on $\hat f$ from \eqref{eq:surrogate-lifted-dynamics}, which can be applied when the model $f$ from \eqref{eq:exact-lifted-dynamics} is  unknown. Specifically, we introduce the  cost $\widehat\cJ_T: \mathbb N_0\times \cH \times \cU^T\to \mathbb R_{\geq 0}$:
\begin{equation*}
    \widehat{\mathcal J}_T(t, z, \uf)=\sum_{j=t}^{T+t-1}\ell(j, \hat z_j, u_j),\label{eq:def_perf_indx_approx}
\end{equation*}
	 where $\hat z_{j+1} = \hat f(\hat z_j, u_j), u_j = \{\uf\}_j$, $\hat z_t=z$. We  employ such a cost in an approximate MPC, as detailed in Algorithm \ref{alg:mpc_approx}. Note tha, such an algorithm uses the inexact model $\hat f$ to make predictions, but gets feedback from the exact model $f$. 
In this way, we obtain an infinitely long control sequence $\hat \uf^\mpc_T\in \cU^\infty$.

\subsubsection{Performance bound} Finally, we are interested in quantifying the sub-optimality gap we incur when employing the MPC strategy from Algorithm \ref{alg:mpc_approx}. In order to do this, we need additional assumptions.

The  first  assumption concerns the effect of time on the stage cost $\ell$.
\begin{assumption}[Stage cost decay]
\label{ass:decay_stage_cost}
There exists a sequence $\left\{\beta_t > 0 \right\}_{t\in\bbN_0}$ with $\beta_0=1$ and $B \coloneqq \sum_{t=0}^{\infty} \beta_t < \infty$ such that
\begin{equation*}
\label{eq:t_stage_cost_upper_bound}
\ell\bigl(t,\psi_x,u\bigr) \le \beta_t \,\ell\bigl(0,\psi_x,u\bigr)
\quad \forall t \in \bbN_0, \, x \in \cX, \, u \in \cU.
\end{equation*}
\end{assumption}
As discussed in Appendix \ref{appendix:lq_cost_rkhs}, this assumption is fulfilled, e.g., when using a discounting strategy.

Further, we make an assumption relating the value function $V_T$ to the approximate feedback law $\hat \mu$ and the surrogate dynamics from \eqref{eq:surrogate-lifted-dynamics}.
\begin{assumption}[Lyapunov decrease]
\label{ass:value_function_decay_robust}
There exist \( \widehat \alpha_T\geq0\) and $\widehat\xi_T {\geq 0}$ such that for \( \forall x \in \cX \) and \( \forall t \in \bbN_0 \), denoting \( z \coloneqq \psi_{x} \),
\begin{align*}
V_T\bigl(0, \hat f(z, \hat\mu_T(t,z))\bigr) - V_T(0,z) &\leq - \widehat\alpha_T\,\ell\bigl(0, z, \hat\mu_T(t, z)\bigr) \ + \ \widehat\xi_T.
\end{align*}
\end{assumption}
This assumption resembles a practical Lyapunov condition \cite{limon2009input} relating the inexact dynamics, used to compute the inexact feedback $\hat \mu$, to the exact dynamics, based on which $V_T$ is computed. Assumptions of this kind are common tools in the analysis of inexact MPC algorithms, see, e.g., \cite[Proposition 2.4]{grune2009analysis}, \cite[Theorem 5.8]{grune2015robustness}.

Lastly, we make an assumption on the impact of the one-step-ahead prediction error between models \eqref{eq:exact-lifted-dynamics} and \eqref{eq:surrogate-lifted-dynamics} to the value function, computed over a window of size $T$.
\begin{assumption}[Predictive error propagation]
\label{ass:value_function_lipschitz_cont}
Fix \( T \in \bbN \). There exist a continuous non-decreasing  function \( \rho_T:\bbR_{\geq 0}\to\bbR_{\geq 0} \) with \( \rho_T(0)=0 \) such that, \( \forall x \in \cX \) and \( \forall t \in \bbN_0 \), denoting \( z \coloneqq \psi_x \), 
\begin{align*}
V_T\bigl(0, f(z, \hat\mu_T(t,z))\bigr) - V_T\bigl(0, \hat f(z, \hat\mu_T(t,z))\bigr)  \leq \rho_T\!\left( \norm{f(z,\hat\mu_T(t,z)) - \hat f(z,\hat\mu_T(t,z))}_\cH \right).
\end{align*}
\end{assumption}
This assumption implies that the impact on the value function $V_T$ of making a wrong prediction is regular enough, the regularity being described by the $\rho_T$ function. This resembles similar assumptions appearing in the literature, see, e.g., the uniform continuity conditions appearing in \cite[Section 6.2]{grune2015robustness}. Assumption~\ref{ass:value_function_lipschitz_cont} is stated in a general form. We further provide a sufficient condition under which it holds, which may be easier to verify in practice for a given control setup.
\begin{proposition}
\label{prop:VT_lipschitz_rho}
Assume that there exists $\eta \geq 0$ such that, $\forall n\in \mathbb N$ and any dataset of points $x_1, \dots, x_n\in \cX$, denoting the family of solutions of \eqref{eq:emp-reg-risk} for that dataset as $\{\widehat K_u\}_{u\in\cU}$,
\begin{equation*}
    \max_{u\in \cU}\lVert \widehat K_u\rVert \leq \eta.
\end{equation*}
Moreover, let Assumption~\ref{ass:bounded-kernel} hold.
Define the closed ball in RKHS
\begin{equation}
\cD_0 \coloneqq \left\{ h \in \cH: \norm{h}_\cH \leq \kappa \, \eta \right\}.
\label{eq:def-cd-0}
\end{equation}
Furthermore, for \( t \in \bbN \) and $R^*$ as in \eqref{eq:r-star}, define
\begin{equation}
\cD_t \coloneqq \left\{ h \in \cH: \norm{h}_\cH \leq \kappa \, \eta R_*^{\,t} \right\}.
\label{eq:def-cd-t}
\end{equation}
Fix $T \in \bbN$. Assume that, for every $t \in \left\{ 0,1,\dots,T-1\right\} $, there exists $L_t>0$ such that, for every $u \in \cU$ and for every $w,w' \in \cD_t$ 
\begin{equation}
\label{eq:ass_time_dep_lipschitz_ell}
\bigl|\ell(t,w,u)-\ell(t,w',u)\bigr| \le L_t \, \norm{w - w'}_{\cH}.
\end{equation}

Then, for every \( z, z' \in \cD_0 \), for \( R_* \) given in~\eqref{eq:r-star},
\begin{equation}
\label{eq:VT0_lipschitz}
\bigl|V_T(0,z)-V_T(0,z')\bigr|
\le \left(\sum_{t=0}^{T-1} L_{t} \, R_*^{t}\right)\,\norm{z-z'}_{\cH}.
\end{equation}

Consequently, for every $x \in \cX$ and every $u \in \cU$, Assumption~\ref{ass:value_function_lipschitz_cont} holds with
\begin{equation*}
V_T\bigl(0,f(\psi_x,u)\bigr)-V_T\bigl(0,\hat f(\psi_x,u)\bigr)
\le \rho_T\!\left(\norm{f(\psi_x,u)-\hat f(\psi_x,u)}_{\cH}\right),
\quad
\rho_T(r)\coloneqq\left(\sum_{t=0}^{T-1} L_{t} \, R_*^{\,t}\right)\,r.
\end{equation*}
\end{proposition}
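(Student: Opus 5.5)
The plan is to compare the two finite-horizon problems control sequence by control sequence, and then pass to the infimum over the finite set $\cU^T$. Fix $z,z'\in\cD_0$ and an arbitrary $\uf\in\cU^T$. Let $z_j$ and $z'_j$ be the exact trajectories from $z_0=z$ and $z'_0=z'$ under $\uf$, namely
\[
z_{j+1}=\cK_{u_j}z_j, \qquad z'_{j+1}=\cK_{u_j}z'_j .
\]
By linearity and Proposition \ref{prop:linear-extension-2}, each $\cK_u$ has operator norm at most $R_u\le R_*$. The first step is an induction giving
\[
\norm{z_j-z'_j}_\cH\le R_*^{\,j}\norm{z-z'}_\cH
\qquad\text{and}\qquad
\norm{z_j}_\cH\le R_*^{\,j}\norm{z}_\cH\le \kappa\eta R_*^{\,j}.
\]
The same bound holds for $z'_j$. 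Hence $z_j,z'_j\in\cD_j$ for all $j\le T-1$, where we read $\cD_0$ as the case $j=0$, consistently with the definitions \eqref{eq:def-cd-0}--\eqref{eq:def-cd-t}.

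The second step uses the local Lipschitz condition \eqref{eq:ass_time_dep_lipschitz_ell} at each time $j$:
\[
|\cJ_T(0,z,\uf)-\cJ_T(0,z',\uf)|\le\sum_{j=0}^{T-1}L_j\norm{z_j-z'_j}_\cH\le\Big(\sum_{j=0}^{T-1}L_jR_*^{\,j}\Big)\norm{z-z'}_\cH .
\]
Since $\cU^T$ is finite, both minima are attained. Take a minimizer $\uf'$ for $z'$. Then $V_T(0,z)\le \cJ_T(0,z,\uf')\le V_T(0,z')+L\norm{z-z'}_\cH$, with $L=\sum_{j}L_jR_*^{\,j}$. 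Exchanging the roles of $z$ and $z'$ gives \eqref{eq:VT0_lipschitz}. Equivalently, one can invoke the fact that an infimum of uniformly $L$-Lipschitz functions is $L$-Lipschitz.

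The third step checks that the two points in Assumption \ref{ass:value_function_lipschitz_cont} lie in $\cD_0$. For the surrogate point, $\norm{\hat f(\psi_x,u)}_\cH=\norm{\widehat K_u\psi_x}_\cH\le\eta\kappa$ by the uniform bound on $\widehat K_u$ and Assumption \ref{ass:bounded-kernel}. The exact point is $f(\psi_x,u)=\psi_{\ff_u(x)}$, whose norm is at most $\kappa$. This is the main obstacle: $\psi_{\ff_u(x)}\in\cD_0$ requires $\kappa\le\kappa\eta$, that is $\eta\ge1$.

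I would handle this in one of two ways. First, note that $\eta$ only appears as an upper bound, so we may assume without loss of generality that $\eta\ge1$, replacing $\eta$ by $\max\{\eta,1\}$. Alternatively, I would bound $\norm{\cK_u\psi_x}_\cH\le R_*\kappa$ and enlarge the balls by a factor $\max\{1,R_*\}$. I would adopt the first option, which keeps the stated constants. Once both points are in $\cD_0$, apply \eqref{eq:VT0_lipschitz} with $z=f(\psi_x,u)$ and $z'=\hat f(\psi_x,u)$, and drop the absolute value. This gives Assumption \ref{ass:value_function_lipschitz_cont} with the linear $\rho_T$, which is continuous, non-decreasing, and satisfies $\rho_T(0)=0$. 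Specializing to $u=\hat\mu_T(t,z)$ yields the stated form.
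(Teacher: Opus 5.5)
Your argument follows the paper's proof step for step. It uses the same induction $\norm{z_j-z'_j}_\cH\le R_*^{\,j}\norm{z-z'}_\cH$ with $z_j,z'_j\in\cD_j$, the same time-dependent Lipschitz bound on each stage cost, the same minimizer swap to pass from $\cJ_T$ to $V_T$, and the same final check that $f(\psi_x,u)$ and $\hat f(\psi_x,u)$ lie in $\cD_0$. You are also right about the one delicate point in that last check. The paper simply writes $\norm{f(\psi_x,u)}_\cH\le\kappa\implies f(\psi_x,u)\in\cD_0$, which silently uses $\eta\ge 1$.

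Your repair, however, is not without loss of generality. Replacing $\eta$ by $\max\{\eta,1\}$ preserves the bound on $\widehat K_u$, but it enlarges every ball $\cD_t$. The constants $L_t$ are only assumed to work on the original balls, so you cannot \emph{keep the stated constants} for free. The same objection applies to your second option of enlarging the balls.

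This is a genuine extra hypothesis, not a normalization. Suppose $\eta<1$ and $\norm{\psi_{\ff_u(x)}}_\cH>\kappa\eta$. An example is a Gaussian kernel, where $\norm{\psi_y}_\cH=\kappa=1$ for all $y$, with $\gamma_u\ge 1$, which gives $\norm{\widehat K_u}\le 1/2$ for every dataset. Then the hypotheses say nothing about $\ell(0,\cdot,u)$ at $\psi_{\ff_u(x)}$. So $V_T(0,f(\psi_x,u))$ can be made arbitrarily large, while $\rho_T(\norm{f(\psi_x,u)-\hat f(\psi_x,u)}_\cH)\le\rho_T(\kappa(1+\eta))$ stays bounded.

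The correct conclusion is this. The bound \eqref{eq:VT0_lipschitz} on $\cD_0$ holds exactly as you proved it. The \emph{consequently} part needs an explicitly stated additional assumption: either $\eta\ge 1$, or the Lipschitz condition imposed on balls of radius $\kappa\max\{1,\eta\}R_*^{\,t}$.
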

The proof of this result is reported in Appendix \ref{app:V_lipschitz_rho}.

We are now ready to state our main sub-optimality bound for Algorithm \ref{alg:mpc_approx}, comparing the cost $\cJ_\infty(0, z_0, \hat \uf^\mpc_T)$ against $V_\infty(0, z_0)$.
\begin{theorem}[Sub-optimality of Algorithm \ref{alg:mpc_approx}]
\label{thm:alpha-perf-nonhomogeneous-v2}
Fix a predictive horizon \( T \ge 2 \). Let the MPC control law $ \hat \mu_T $ and the corresponding control sequence $\hat \uf^\mpc_T$ be computed according to Algorithm \ref{alg:mpc_approx}. 
Let Assumptions \ref{ass:bounded-kernel}, \ref{ass:koopman-well-specified}, \ref{ass:koopman-source-condition} hold. Moreover, 
let Assumptions \ref{ass:decay_stage_cost}, \ref{ass:value_function_decay_robust}, \ref{ass:value_function_lipschitz_cont} on the control problem hold. Assume there exists \( \rho_\infty: \bbR_{\ge 0} \to \bbR_{\ge 0} \) such that for every \( T \in \bbN \), \( \rho_T \le \rho_\infty \). For $n> 0$, $\delta \in (0, 1]$, define \( \varepsilon(n, \delta) < \infty \) to be a scalar such that
\begin{equation}
\label{eq:model-mismatch-bound}
\sup_{x \in \cX} \norm{f\bigl(\psi_x,\hat \mu_T(0, \psi_x)\bigr)- \hat f\bigl(\psi_x,\hat \mu_T(0, \psi_x)\bigr)}_\cH \leq \varepsilon(n, \delta).
\end{equation}
with probability at least $1-\delta$. Fix \( x_0 \in \cX \) and let \( z_0 = \psi_{x_0} \in \cH \). Then the following bound holds with probability at least $1-\delta$:
\[
\cJ_\infty(0, z_0, \hat \uf^\mpc_T) \le \frac{1}{\widehat \alpha_T}\,V_\infty(0, z_0) +\frac{B}{\widehat \alpha_T}\widehat \xi_T+ \frac{B}{\widehat \alpha_T} \, \rho_\infty(\varepsilon(n, \delta)).
\]
\end{theorem}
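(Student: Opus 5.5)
The plan is a standard relaxed dynamic programming (Lyapunov telescoping) argument along the closed-loop trajectory generated by Algorithm~\ref{alg:mpc_approx}, with the value function $V_T(0,\cdot)$ as Lyapunov function. We then convert time-$0$ stage costs into time-$t$ stage costs using Assumption~\ref{ass:decay_stage_cost}.

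Let $z_t$ be the closed-loop lifted states, so that $z_{t+1}=f(z_t,\hat\mu_T(t,z_t))$. By \eqref{eq:exact-lifted-dynamics}, $z_t=\psi_{x_t}$ for the true states $x_t\in\cX$, so the assumptions stated for lifted points $\psi_x$ apply at every $z_t$. For each $t$, write
\[
V_T(0,z_{t+1})-V_T(0,z_t)=\bigl[V_T(0,f(z_t,\hat\mu_T(t,z_t)))-V_T(0,\hat f(z_t,\hat\mu_T(t,z_t)))\bigr]+\bigl[V_T(0,\hat f(z_t,\hat\mu_T(t,z_t)))-V_T(0,z_t)\bigr].
\]
Bound the first bracket by $\rho_T(\|f-\hat f\|_\cH)$ using Assumption~\ref{ass:value_function_lipschitz_cont}. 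Then use \eqref{eq:model-mismatch-bound}, which holds with probability at least $1-\delta$ uniformly over $x$, together with the monotonicity of $\rho_T$ and $\rho_T\le\rho_\infty$, to get at most $\rho_\infty(\varepsilon(n,\delta))$. Bound the second bracket by $-\widehat\alpha_T\ell(0,z_t,\hat\mu_T(t,z_t))+\widehat\xi_T$ via Assumption~\ref{ass:value_function_decay_robust}. This yields the one-step inequality
\[
\widehat\alpha_T\,\ell(0,z_t,\hat\mu_T(t,z_t))\le V_T(0,z_t)-V_T(0,z_{t+1})+\widehat\xi_T+\rho_\infty(\varepsilon(n,\delta)).
\]

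The step I expect to be the main obstacle is the mismatch between the time index of the closed-loop cost, $\ell(t,\cdot)$, and the time-$0$ quantities in the Lyapunov inequality. A plain telescoping sum of the one-step inequality gives $\sum_t \ell(0,z_t,\cdot)$, but the error terms sum to infinity. The fix is to multiply the one-step inequality at time $t$ by $\beta_t$ before summing. By Assumption~\ref{ass:decay_stage_cost}, the left side then dominates $\widehat\alpha_T\ell(t,z_t,\hat\mu_T(t,z_t))$, and the error terms sum to $B(\widehat\xi_T+\rho_\infty(\varepsilon))$, which is finite.

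The Lyapunov differences become $\sum_t\beta_t\bigl(V_T(0,z_t)-V_T(0,z_{t+1})\bigr)$, which no longer telescopes. A summation by parts gives
\[
\beta_0V_T(0,z_0)+\sum_{t\ge1}(\beta_t-\beta_{t-1})V_T(0,z_t)-\lim\beta_NV_T(0,z_{N+1}).
\]
This is at most $V_T(0,z_0)$ provided $\beta_t$ is non-increasing, which holds in the discounted setting. If monotonicity is not available, I would instead need a bound on $\sup_t V_T(0,z_t)$ from Assumption~\ref{ass:decay_stage_cost}, and I flag this as the delicate point. Throughout, $V_T\ge0$ because $\ell\ge0$, and $\beta_0=1$.

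To conclude, compare $V_T(0,z_0)\le V_\infty(0,z_0)$; this holds because $\ell\ge0$, so truncating an optimal infinite sequence to its first $T$ controls costs no more. Summing up to $N$, letting $N\to\infty$ by monotone convergence, and dividing by $\widehat\alpha_T$ yields
\[
\cJ_\infty(0,z_0,\hat\uf^\mpc_T)\le\frac{1}{\widehat\alpha_T}V_\infty(0,z_0)+\frac{B}{\widehat\alpha_T}\widehat\xi_T+\frac{B}{\widehat\alpha_T}\rho_\infty(\varepsilon(n,\delta)).
\]
The bound holds on the event of probability at least $1-\delta$ where \eqref{eq:model-mismatch-bound} holds. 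The learning-rate assumptions (\ref{ass:bounded-kernel}--\ref{ass:koopman-source-condition}) enter only to justify the existence of such an $\varepsilon(n,\delta)$ via Theorem~\ref{thm:koopman-sample-rate}.
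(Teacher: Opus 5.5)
Your proposal is correct and follows essentially the same argument as the paper: the one-step inequality from Assumptions \ref{ass:value_function_decay_robust} and \ref{ass:value_function_lipschitz_cont} on the event \eqref{eq:model-mismatch-bound}, weighting by $\beta_t$, summation by parts, and finally $V_T(0,z_0)\le V_\infty(0,z_0)$. You are right to flag the monotonicity of $\beta_t$: the paper also uses it without it being part of Assumption \ref{ass:decay_stage_cost} (it does hold for $\beta_t=\lambda^t$), and both arguments likewise apply \eqref{eq:model-mismatch-bound}, which is stated for $\hat\mu_T(0,\cdot)$, to the controls $\hat\mu_T(t,z_t)$, so these are caveats shared with the paper rather than gaps specific to your argument.
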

    The proof of this result is reported in Appendix \ref{appendix:robust_ctrl_results}.

Given a sufficiently large training set of $n$ state values and the surrogate lifted model $\hat f$, the event \eqref{eq:model-mismatch-bound} holds with probability $1-\delta,$ for $\delta \in (0, 1]$, and $\epsilon(n, \delta) \sim \log \! \left(2|\cU| / \delta\right) \cdot \kappa n^{-\frac{2r-1}{4r+2}}$, cf.\ Theorem \ref{thm:koopman-sample-rate}. Consequently, Theorem~\ref{thm:alpha-perf-nonhomogeneous-v2} provides a high-probability bridge between statistical Koopman learning and robust MPC performance. Under the source and well-specifiedness assumptions, the learned lifted model becomes increasingly accurate with more data, and this accuracy propagates quantitatively to the closed-loop cost. Note that the sole term depending exclusively on our finite-sample error rate is $\epsilon(n, \delta)$, but only through the function \(\rho_\infty\). On the other hand, $\widehat \alpha_T$ and $\widehat \xi_T$ capture the interplay between the model accuracy, and the predictive power of the MPC algorithm in terms of the horizon $T$. The closest result to ours is given by \cite[Theorem 8.3]{grune2015robustness}, where similar empirical quantities to our $\widehat \alpha_T$ and $\widehat \xi_T$ appear (cf.\ Lemma 8.1). Similar to ours, their result considers MPC for nonlinear dynamical systems and arbitrary optimal control objectives, differently from other approaches, that obtain robust performance bounds in the case of linear dynamics and quadratic costs \cite{berberich2025overview}. However, their analysis is performed for a finite-dimensional state space, while ours is applicable to infinite dimensions as well. Moreover, the sub-optimality gap in \cite{grune2015robustness} relies on a semi-global practical asymptotic stability result, while our quantification only relies on properties of the value function. 
\section{Numerical simulations}
\label{sec:experiments}
In this section we provide an empirical validation of the performance of Algorithm \ref{alg:mpc_approx}, on the Duffing oscillator benchmark.
Specifically, we consider the forced oscillator described by the differential equations \cite{korda2020optimal,caldarelli2025linear}:
\begin{equation}
\dot x{(1)} = x{(2)}\qquad \dot x{(2)} = -0.5 x{(2)} -x{(1)}(4{x{(1)}}^2-1) + 0.5u.\label{eq:duffing}
\end{equation}
The ultimate control goal is the stabilization of the origin $[0, 0]^T$. The continuous dynamics are discretized with the Runge-Kutta method. 
\paragraph*{Stage cost}  Setting $\lambda = 0.9999$, $r\geq 0$, our stage cost is defined as
\begin{equation}
    \ell(t, \hat z_t, u_t) = \lambda^t (\norm{\hat z_t - \psi_0}_\cH^2+r\, c(u_t)),\label{eq:stage_cost_duffing}
\end{equation}
subject to the dynamics
\begin{equation*}
\hat z_0 = \psi_{x_0}
\quad \text{and} \quad
\hat z_{t+1} = \widehat \cK_{u_t} \hat z_t,
\end{equation*}
where $c(u)$ is the cost associated to the control $u \in \cU$. Note that the use of the stage cost \eqref{eq:stage_cost_duffing} within an RKHS fulfills the assumptions of Theorem \ref{thm:alpha-perf-nonhomogeneous-v2}, see Appendix \ref{appendix:lq_cost_rkhs} for details.

\paragraph*{Training setup} The data-driven Koopman dynamics are obtained by lifting the state with 400 random Fourier features (RFFs) of the Gaussian kernel \eqref{eq:gaussian_kernel} \cite{rahimi2007random}. 
The kernel lengthscale is set to $0.5$. 
The training data are generated by sampling initial conditions from the unit circle, and the corresponding one-step-ahead state values obtained via \eqref{eq:duffing}. {For all $u\in \cU$,  we choose $\rho_u$, see \eqref{eq:sampling_rule}, to be the uniform measure on the closed ball, since we do not have any prior information. The choice of other measures can be further explored as future work (e.g., by means of density estimation over Monte Carlo samples of trajectories).} The values of $u$ defining $\mathcal U$ are fixed as discussed in Section \ref{subsec:symm_ctrl_set} and Section \ref{subsec:asymm_ctrl_set}. 
\paragraph*{Evaluation} 
\begin{figure*}
\begin{subfigure}{.245\linewidth}
    \includegraphics[width=\linewidth]{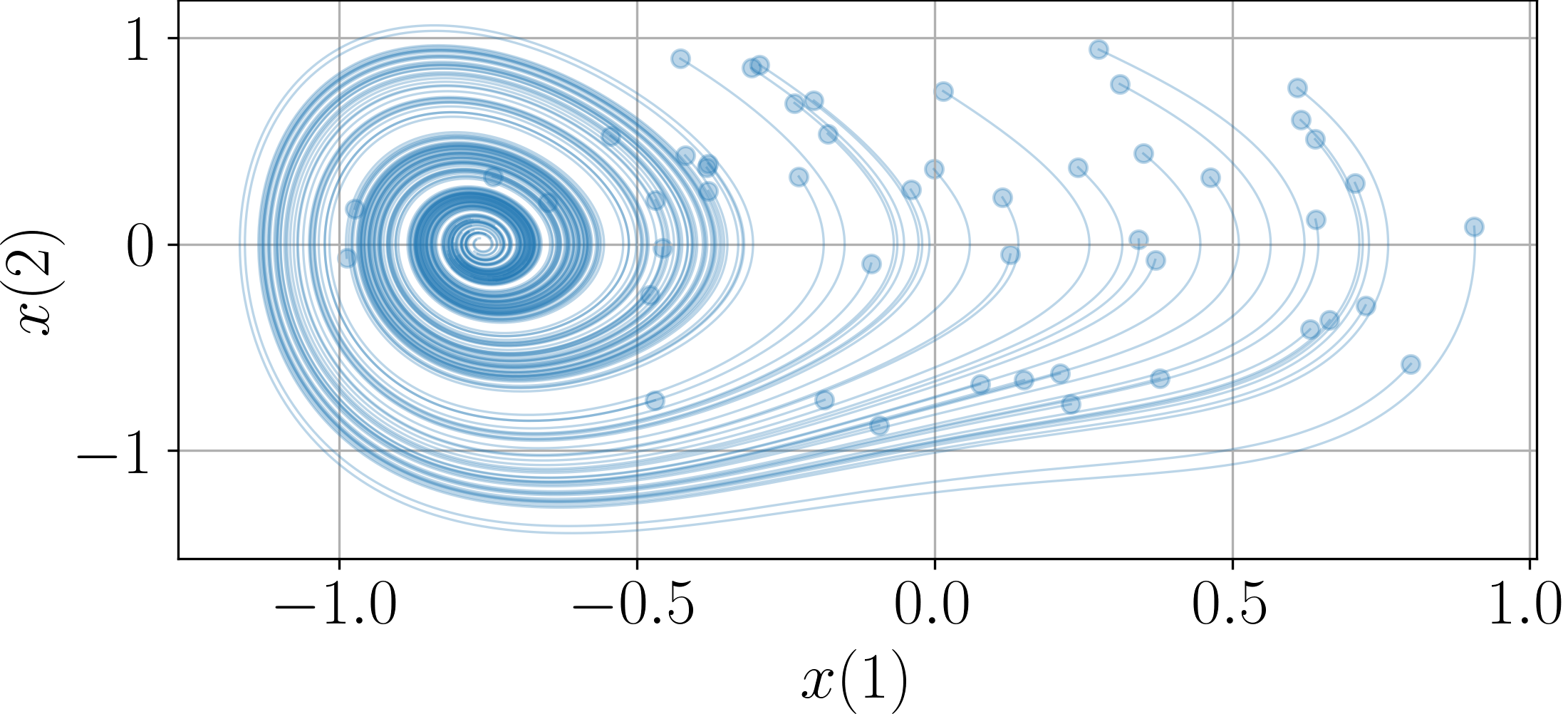}
    \caption{$u=-2$.}
    \label{fig:sample_trajs_-2}
\end{subfigure}
\begin{subfigure}{.245\linewidth}
    \includegraphics[width=\linewidth]{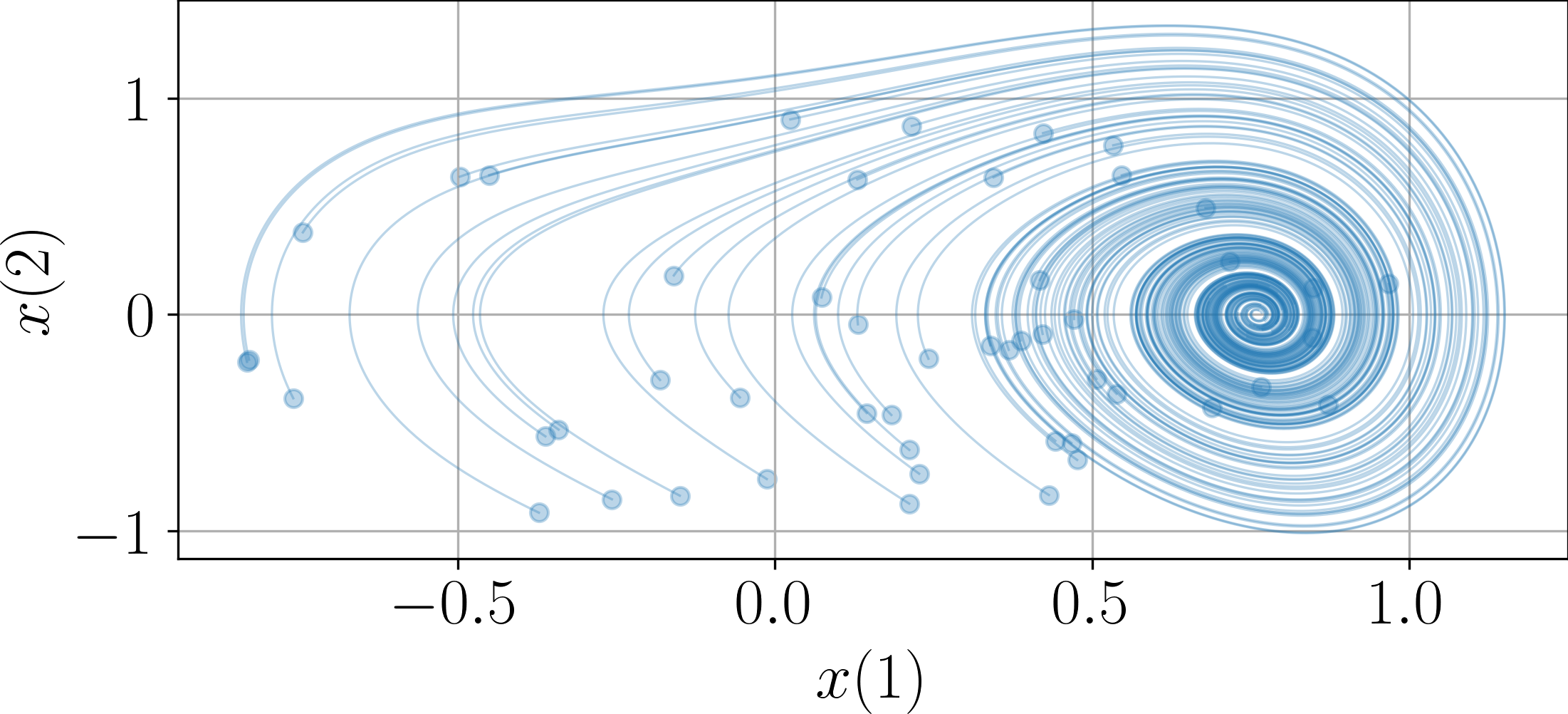}
    \caption{$u=2$.}
        \label{fig:sample_trajs_2}
\end{subfigure}
\begin{subfigure}{.245\linewidth}
    \includegraphics[width=\linewidth]{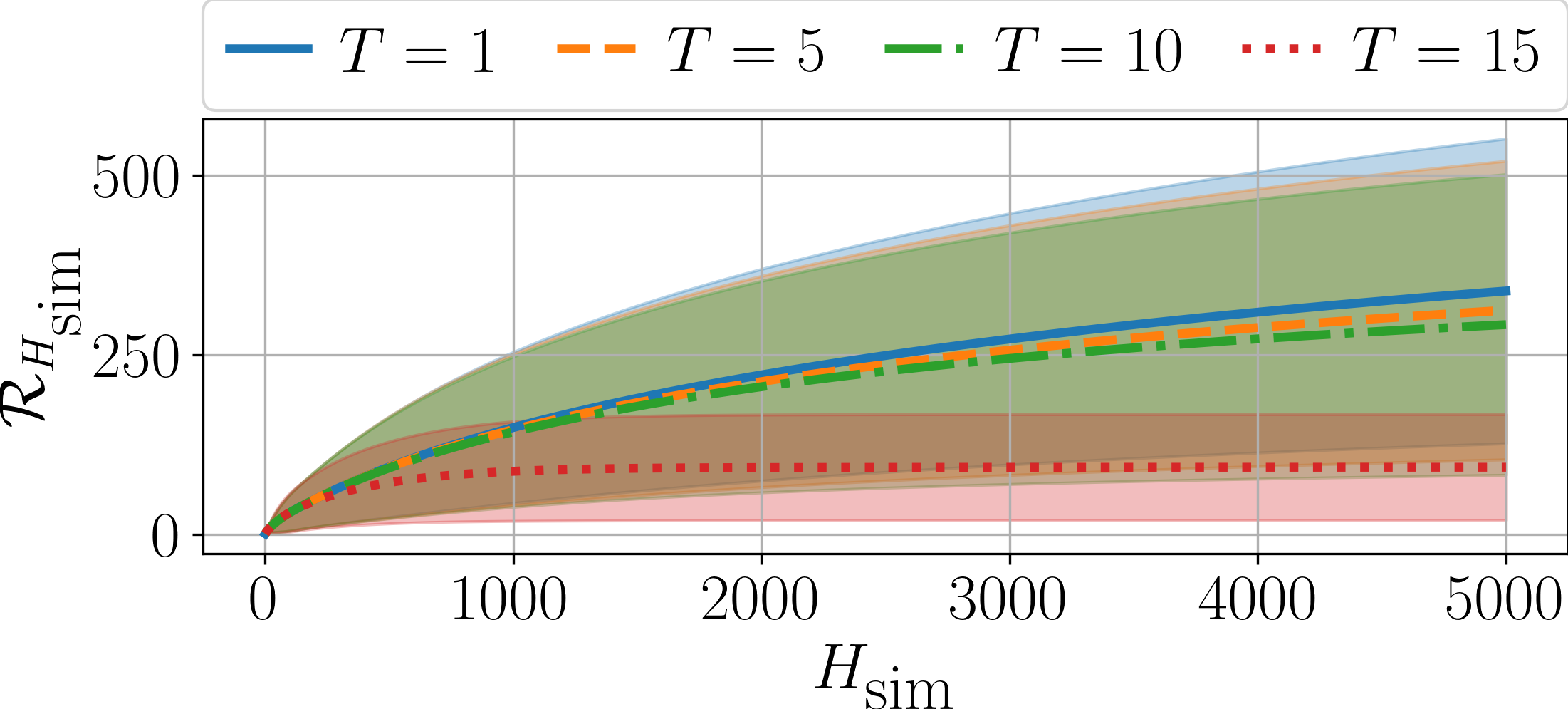}
    \caption{KPI for varying $T$.}
    \label{fig:regret_T}
\end{subfigure}
\begin{subfigure}{.245\linewidth}
    \includegraphics[width=\linewidth]{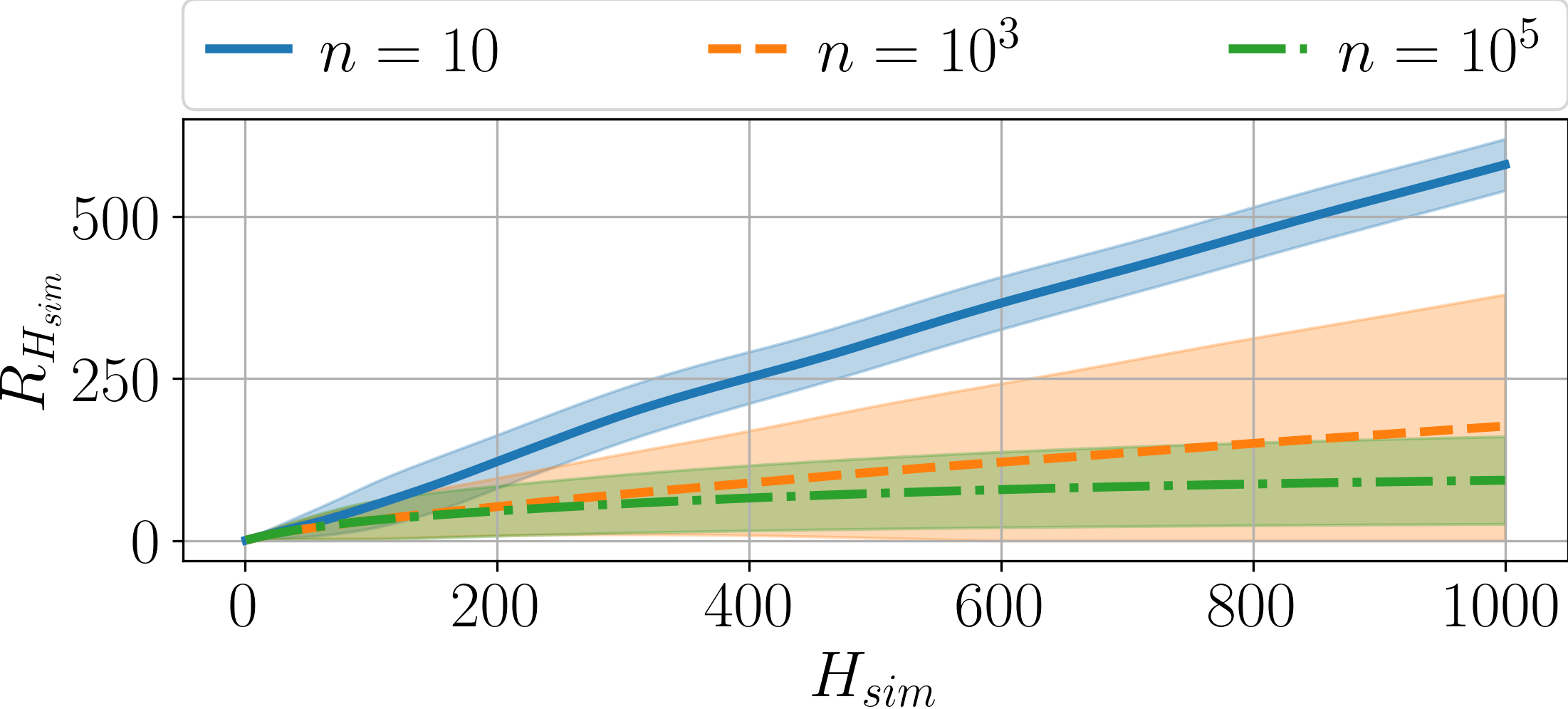}
    \caption{KPI for varying $n$.}
    \label{fig:regret_n}
\end{subfigure}
\caption{(a)--(b) Illustrative examples of  trajectories following \eqref{eq:duffing} when fixing $u=-2$ and $u=2$ respectively (initial conditions marked as blue dots). (c) KPI \eqref{eq:regret} for different values of the predictive horizon, and fixed number of training samples. As expected, a larger value of $T$ improves the performance of the controller, yielding a smaller error. Mean $\pm$ std over 100 random initializations form the unit circle. (c) KPI \eqref{eq:regret} for different values of $n$ and fixed $T$. Mean $\pm$ std over 100 random initializations form the unit circle.}
\end{figure*}
\begin{figure*}
\begin{subfigure}{.24\linewidth}
\centering    \includegraphics[width=\linewidth]{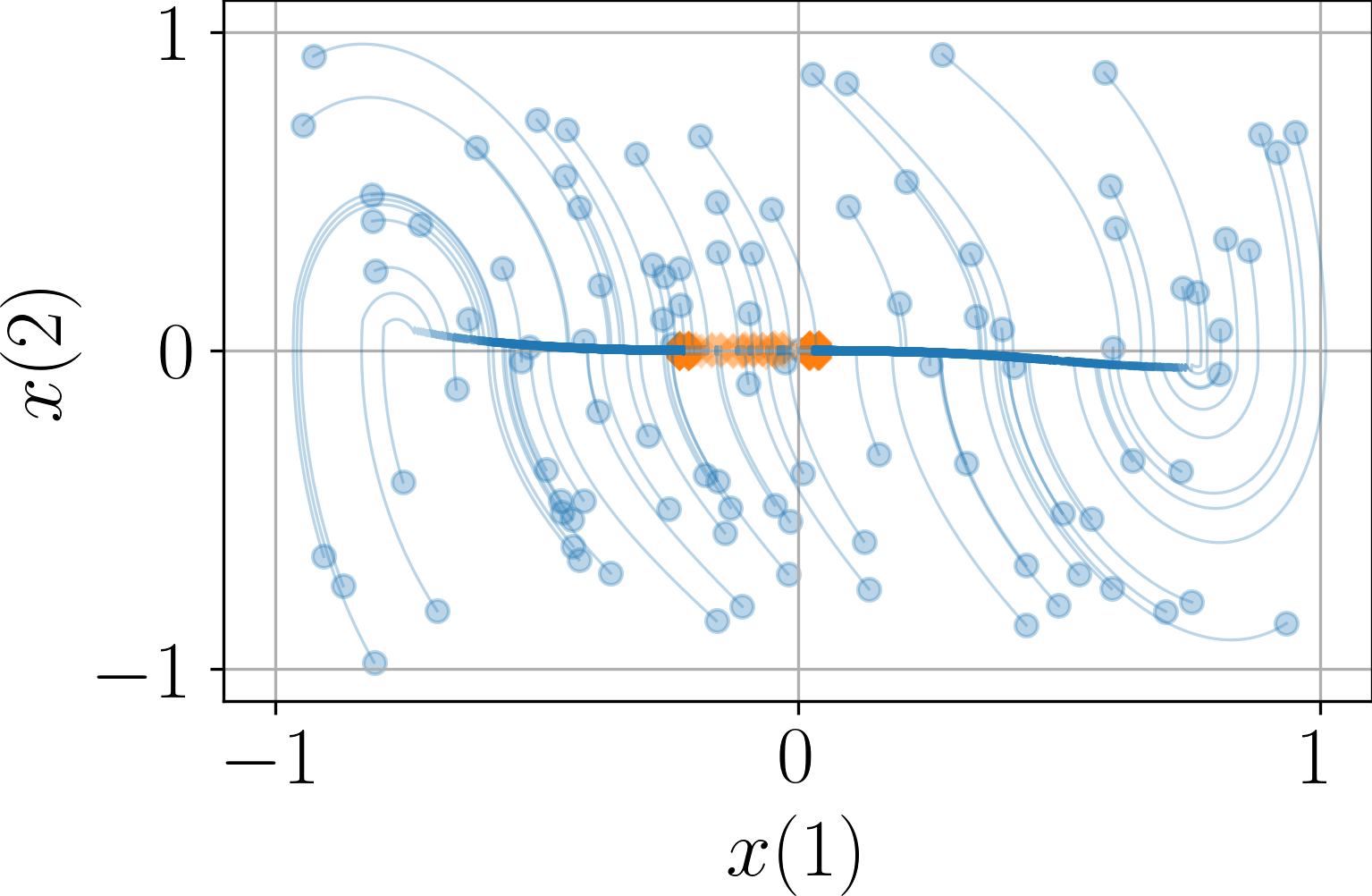}
    \caption{$T=1$.}
\end{subfigure}\hfill
\begin{subfigure}{.24\linewidth}
\centering
\includegraphics[width=\linewidth]{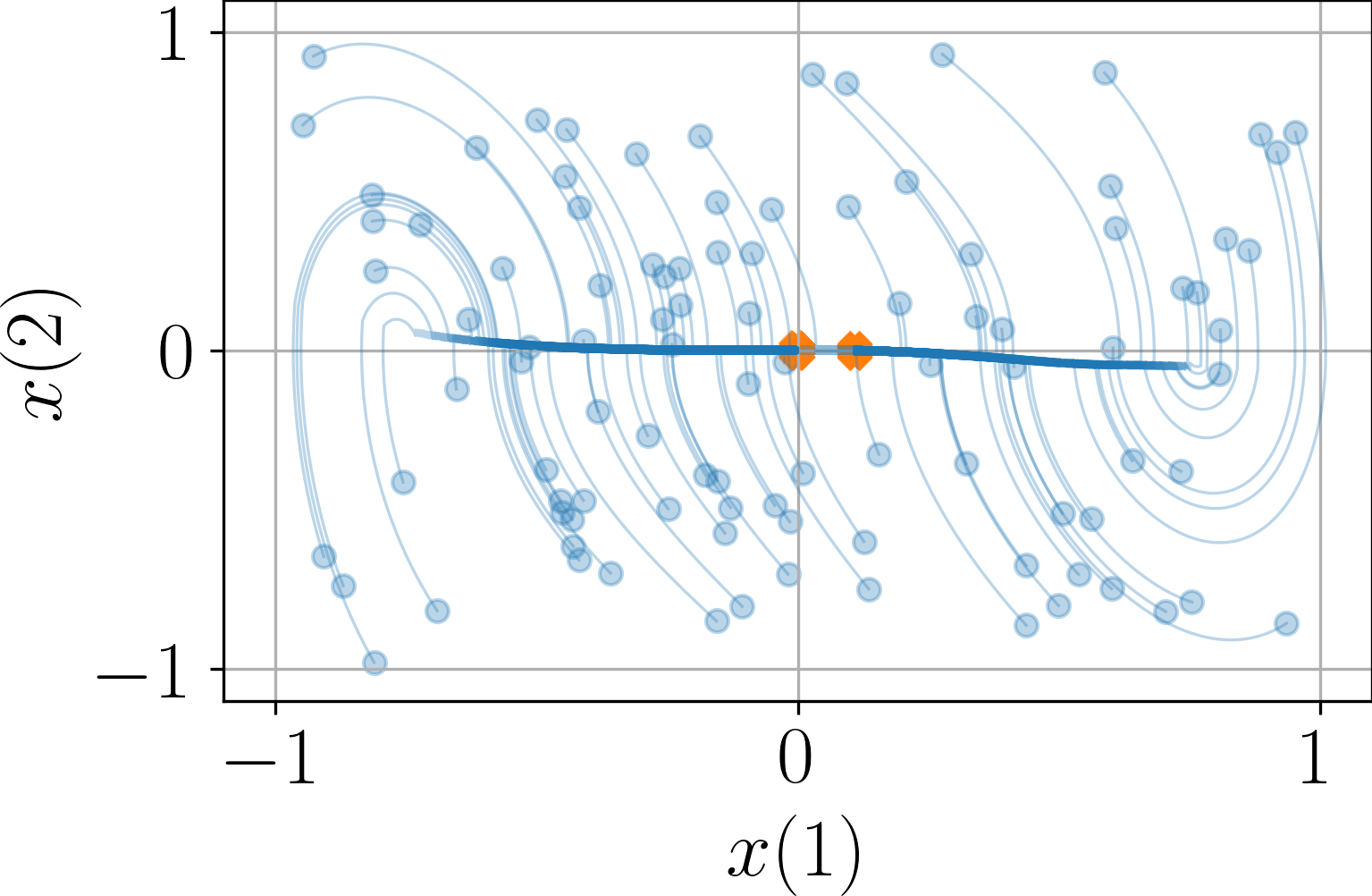}
    \caption{$T=5$.}
\end{subfigure}\hfill
\begin{subfigure}{.24\linewidth}
\centering
\includegraphics[width=\linewidth]{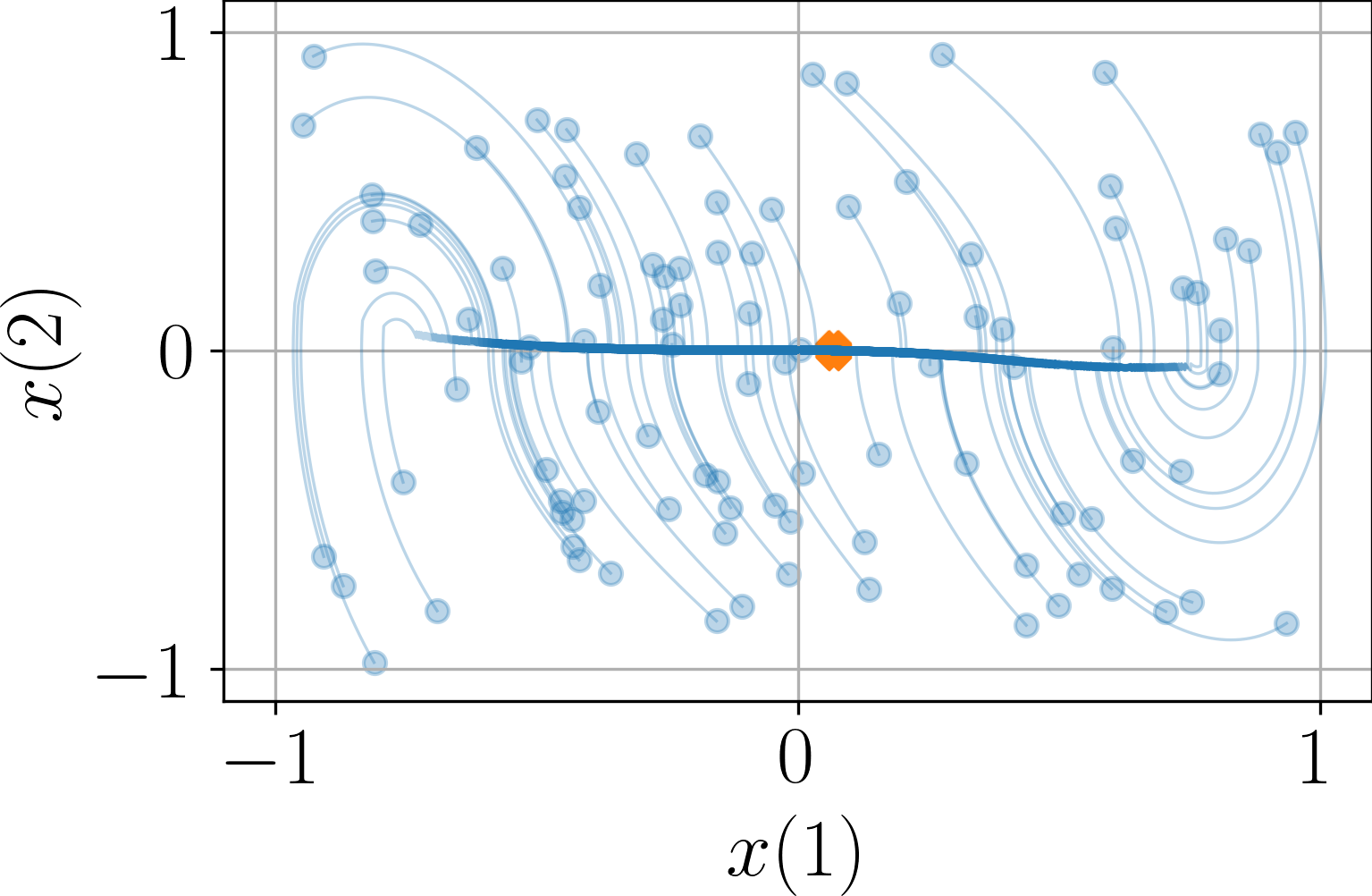}
    \caption{$T=10$.}
\end{subfigure}
\begin{subfigure}{.24\linewidth}
\centering
\includegraphics[width=\linewidth]{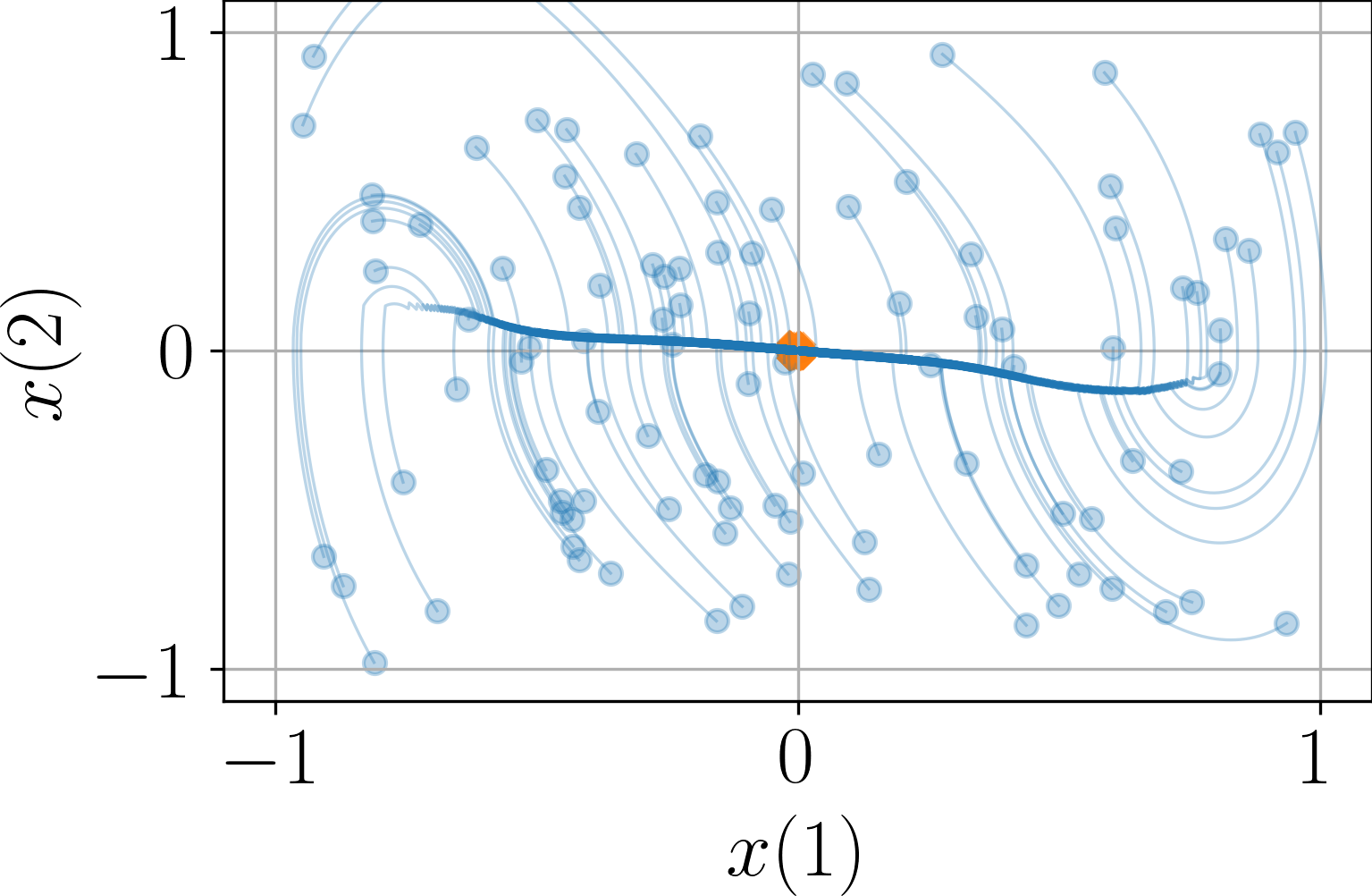}
    \caption{$T=15$.}
\end{subfigure}
\caption{Visualization of 100 trajectories of the dynamics \eqref{eq:duffing}, controlled with our MPC strategy and a symmetric control set, for different values of the MPC predictive horizon $T$ (trajectories simulated for $10^5$ steps for $T<15$ and $5\cdot10^3$ steps for $T=15$). The state at the beginning resp.\ at the end of the simulation is displayed as a blue resp.\ orange dot. As $T$ increases, the performance of the controller improves, (see, e.g., how the system stabilizes at the origin for $T=15$, around the origin for $T=10$, whereas it exhibits multiple attractors for $T=1,\ 5$). The surrogate model is learned with $10^6$ samples.}\label{fig:trajectories_duffing_T}
\end{figure*}
In order to asses the performance of the data-driven MPC, we consider the following key performance indicator (KPI) $\mathcal R_{H_\textrm{sim}}$, measuring the (discounted) distance of the state to the origin, given a simulation horizon $H_\textrm{sim} > 0$:
\begin{equation}
    \mathcal R_{H_\textrm{sim}} = \sum_{t=0}^{H_\textrm{sim}}\lambda^t({\norm{x_t}^2_2} + r\cdot c(u_t)).\label{eq:regret}
\end{equation}
Ideally, we would like this cost to converge, indicating that the system's state, and hence the observable function $\psi$ in the RKHS, stabilize in a sufficiently small neighborhood of the (lifted) origin. This KPI allows to \emph{empirically} validate whether controlling the system in the observable space yields a meaningful behavior in the state space as well (cf.\ Remark \ref{remark:originality_of_ocp}).

\subsection{Symmetric control set}
\label{subsec:symm_ctrl_set}
\begin{figure*}
\begin{subfigure}{.25\linewidth}
\centering    \includegraphics[width=\linewidth]{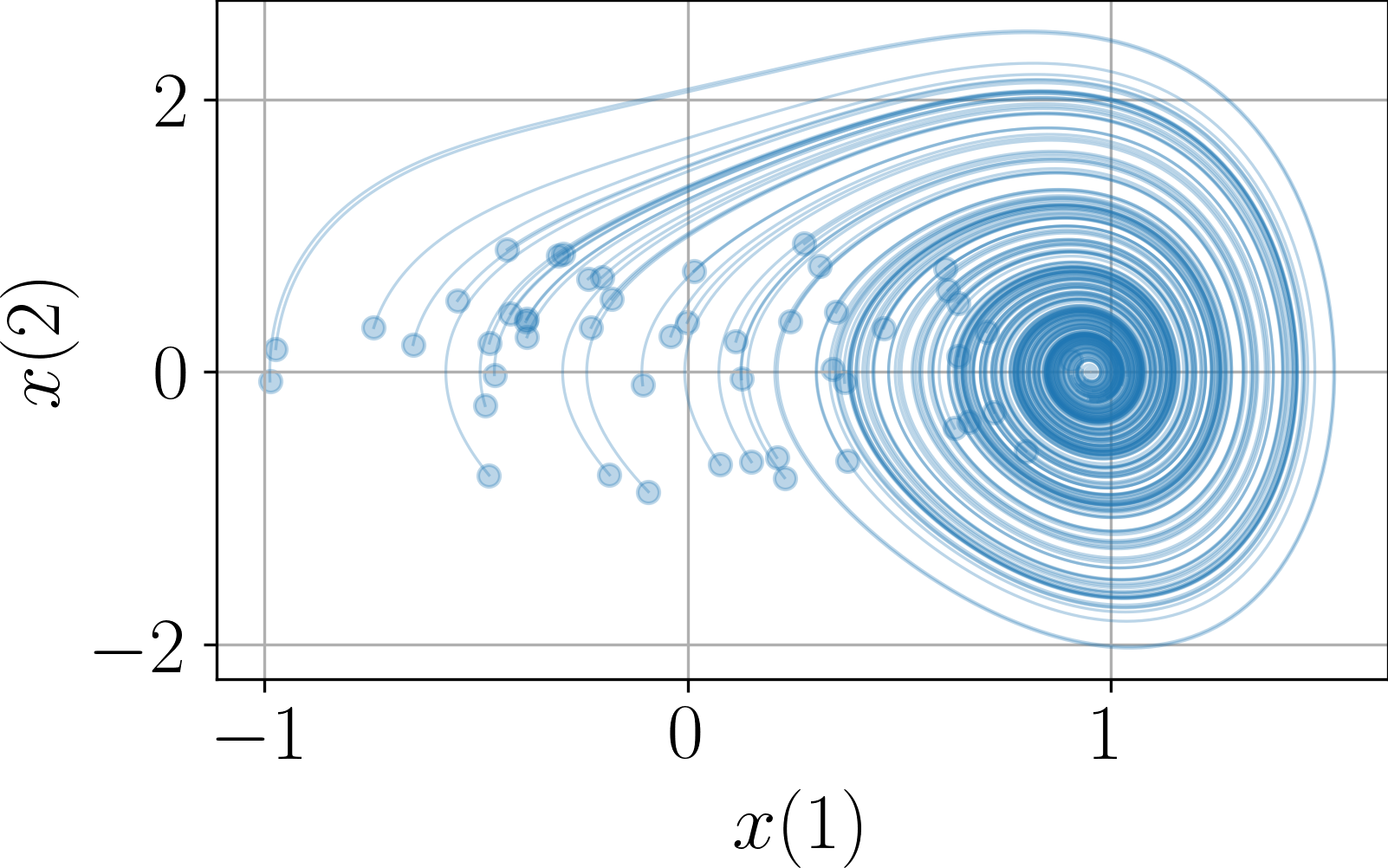}
    \caption{$u=5$.}
\label{fig:sample_trajs_5}
\end{subfigure}\hfill
\begin{subfigure}{.24\linewidth}
\centering
\includegraphics[width=\linewidth]{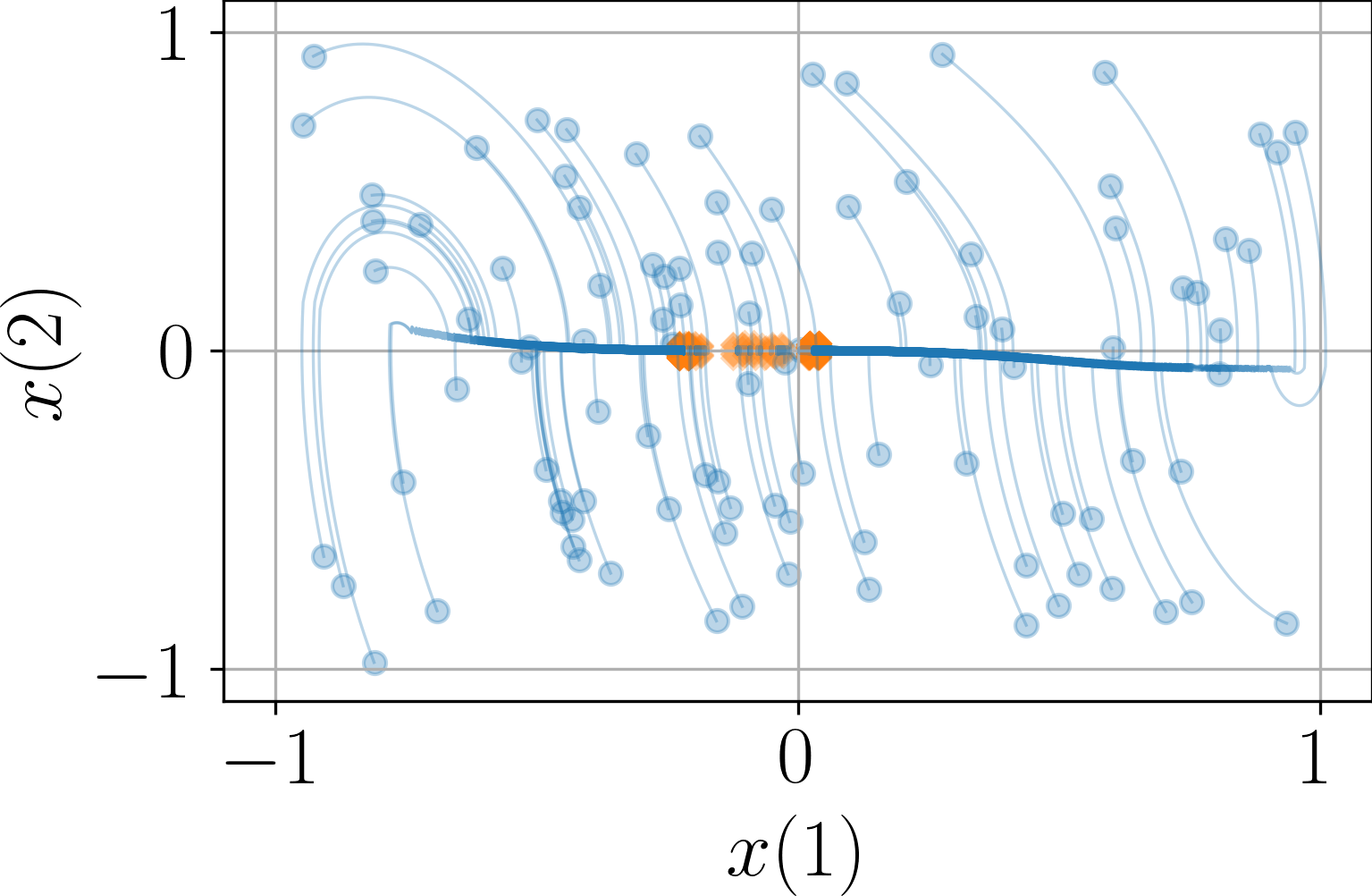}
    \caption{$T=1$.}
    \label{fig:asymm_T_1}
\end{subfigure}\hfill
\begin{subfigure}{.24\linewidth}
\centering
\includegraphics[width=\linewidth]{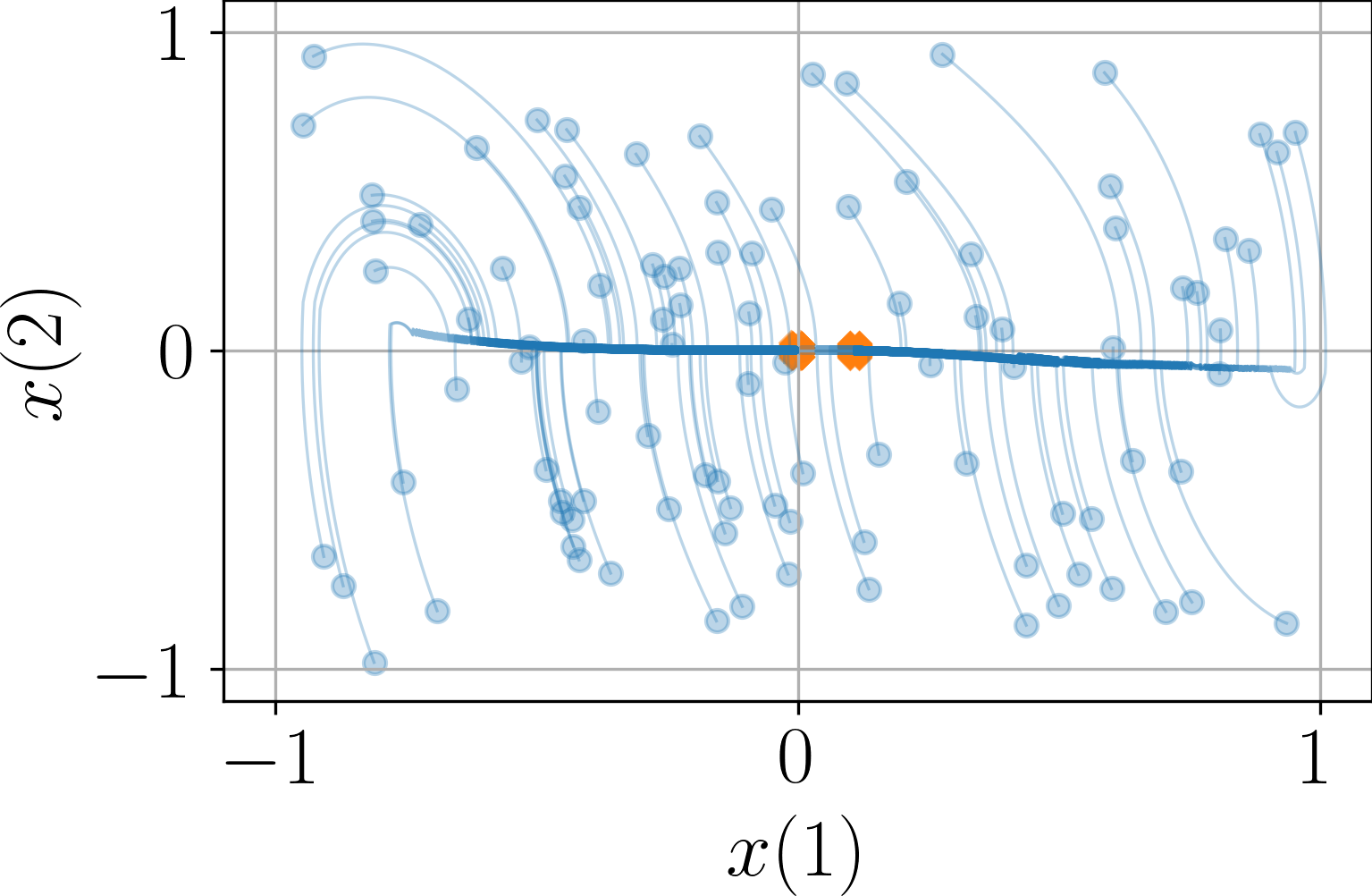}
    \caption{$T=5$.}
        \label{fig:asymm_T_5}
\end{subfigure}
\begin{subfigure}{.25\linewidth}
\centering
\includegraphics[width=\linewidth]{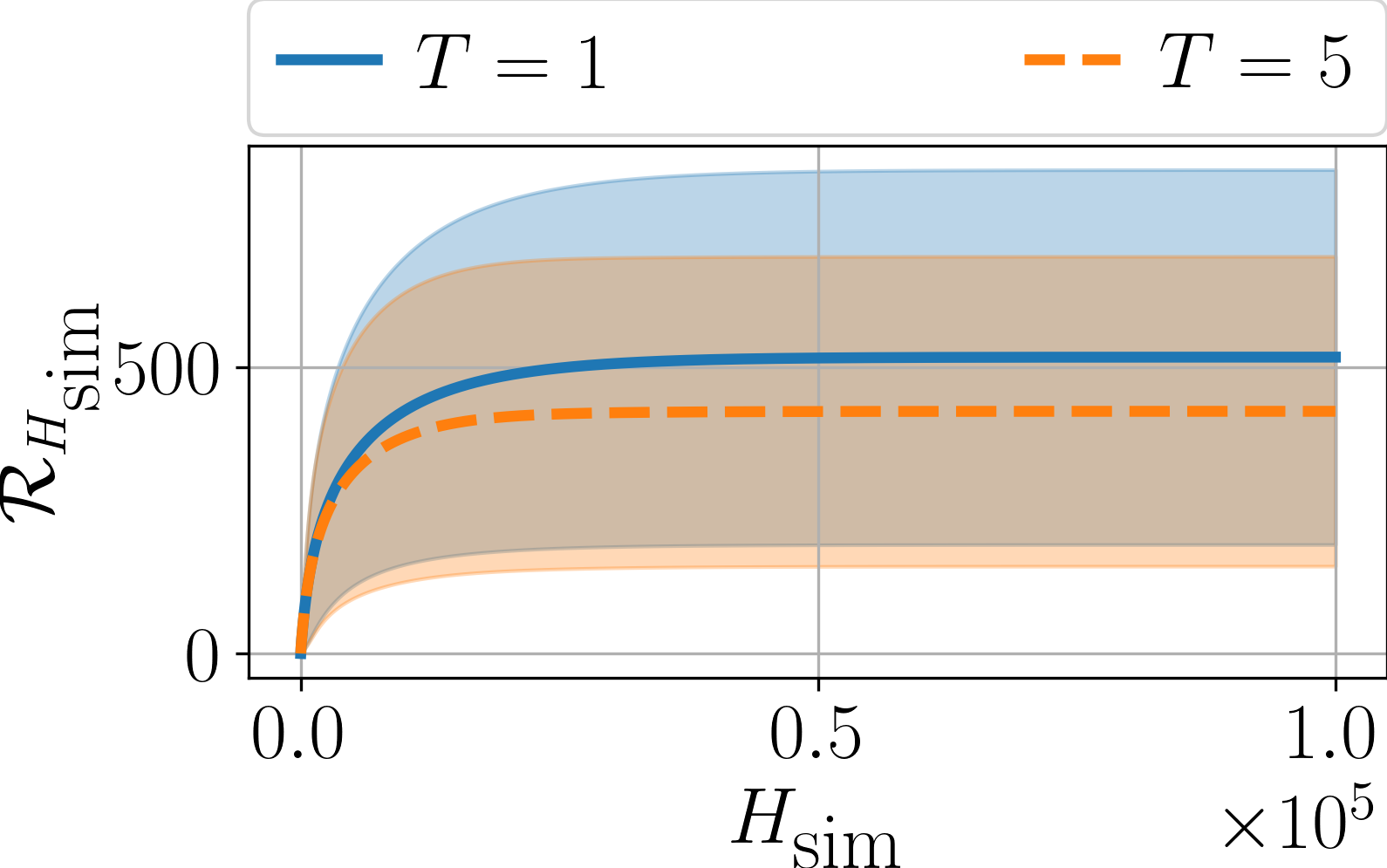}
    \caption{KPI.}
\end{subfigure}
\caption{(a) Illustrative examples of trajectories following \eqref{eq:duffing} when fixing $u=5$ (initial conditions marked as blue dots). (b)--(c) Visualization of 100 closed-loop trajectories of the dynamics \eqref{eq:duffing}, controlled with our MPC strategy and an asymmetiric control set, for different values of the MPC predictive horizon $T$ (trajectories simulated for $10^5$ steps). The state at the beginning resp.\ at the end of the simulation is displayed as a blue resp.\ orange dot. In this scenario, the costly control is used at the beginning of the trajectory to approach the sliding surfaces faster. (d) KPI \eqref{eq:regret} for our closed-loop controller with an asymmetric control set.}\label{fig:trajectories_duffing_T_asymm}
\end{figure*}
We begin by considering a symmetric control set, namely $\mathcal U = \{-2, 2\}$. Note that the number of chosen controls is in line with, e.g., the experiments in \cite{peitz2019koopman}. An illustration of the trajectories of the oscillator, obtained by fixing the control $u$ to these values, is offered in Figs.\ \ref{fig:sample_trajs_-2} and \ref{fig:sample_trajs_2}. In this case, we set $r=0$ in \eqref{eq:stage_cost_duffing} and \eqref{eq:regret}.

Fig.\ \ref{fig:regret_T} shows how the KPI \eqref{eq:regret} changes as a function of the predictive horizon $T$, having fixed the number of training samples $n=10^6$. In this case, the regularization parameter is also fixed, i.e., $\gamma = 10^{-5}$. Conversely, Fig.\ \ref{fig:regret_n} shows the same KPI for increasing $n$ and fixed $T=15$. In this case, the regularization scales as ${1}/{\sqrt{n}}$ (cf.\ Theorem \ref{thm:koopman-sample-rate}). Overall, the behavior of the KPI confirms that increasing the predictive horizon and increasing the number of training samples improve the MPC performance when approaching the origin. 

We offer a qualitative visualization of our control strategy as the predictive horizon $T$ increases, starting from initial conditions sampled uniformly at random from the unit circle. As shown in Fig.\ref{fig:trajectories_duffing_T}, a larger horizon improves the controller's performance, especially in a neighborhood of the origin. We further observe the emergence of sliding surfaces along which the switching closed-loop dynamics move in order to reach the origin. The sliding surfaces on which the closed-loop system moves are determined by the chosen predictive horizon, and in turn affect the algorithm's performance.
\subsection{Asymmetric control set} 
\label{subsec:asymm_ctrl_set}
As a further evaluation, we consider the control set $\mathcal U = \{-2, 2, 5\}$. Fig.\ \ref{fig:sample_trajs_5} shows a qualitative visualization of the oscillator's trajectories obtained by setting $u=5$. We set $r=10^{-6}$ in \eqref{eq:stage_cost_duffing} and \eqref{eq:regret}, and $c(2)=c(-2)=4$, $c(5)=25$. In this case, we fix the number of training samples $n=10^6$ and the regularization $\gamma=10^{-5}$. Comparing the trajectories in Figs.\ \ref{fig:asymm_T_1} and \ref{fig:asymm_T_5} with the ones in Fig.\ \ref{fig:trajectories_duffing_T}, we observe that the additional control allows to move faster towards the switching surfaces in the initial phase of the trajectories.
\section{Conclusions}
\label{sec:conclusion}
In this work, we have studied how Koopman operator learning and model predictive control can be used to learn to control switching nonlinear systems. Starting from the nonlinear dynamics, we have constructed a corresponding Koopman-based, linear switching dynamical system. Such a system is learned from snapshots of the system's evolution, and used to perform model predictive control in closed-loop. From a theoretical point of view, we have quantified the error we incur when learning the Koopman dynamics from finite samples. Moreover, we have studied the performance of the model predictive control algorithm, combined both with exact and inexact Koopman-based predictions. Lastly, we have evaluated the performance of the inexact model predictive control pipeline on a benchmark for nonlinear, data-driven control.

Several questions can be addressed as future research directions. For instance, the theoretical analysis related to Koopman operator regression could be extended to different types of sampling. Moreover, Assumption~\ref{ass:value_function_decay_robust}, here stated in its full generality, could be verified for specific classes of nonlinear systems. 
A further development of this work is related to its application to real controlled systems, which would require to approximate the combinatorial search solving the optimal control problem \eqref{eq:mpc_ocp}, in case quasi-real time performance in closed-loop is required \cite{sager2009reformulations}.
\section*{Acknowledgments}
E.\ C.\ and C.\ M.\ acknowledge support from COST Action InterCoML (CA24136). C.\ M.\ acknowledges the support of the European Commission (grant TraDE-OPT 861137), of the ERC (grant SLING 819789), of the US AFOSR (FA8655-22-1-7034), of the MIUR (PRIN 202244A7YL), and of the project PNRR FAIR PE0000013-SPOKE 10. The research by C.\ M.\ has been supported also by the MIUR Excellence Department Project awarded to DIMA, UniGe, CUP D33C23001110001. C.\ M.\ is member of the GNAMPA (INdAM). O.\ K.\ and L.\ R.\ acknowledge the financial support of the European Commission (Horizon Europe grant ELIAS 101120237). L.\ R.\ also acknowledges the financial support of the Ministry of Education, University and Research (FARE grant ML4IP R205T7J2KP).
\newpage
\appendix
\section{Proofs for Koopman operator extension}
\subsection{Proof of Proposition \ref{lem:linear-extension-H0}}
\label{app:proof-linear-extension}
We first show that for a strictly p.d.\ kernel, for every finite collection $\{(x_i,\alpha_i)\}_{i=1}^m\subset \cX\times\bbR$,
\begin{equation}
\sum_{i=1}^m \alpha_i \psi_{x_i} = 0 \in \cH
\quad \Longrightarrow \quad
\sum_{i=1}^m \alpha_i \psi_{\ff_u(x_i)} = 0 \in \cH.
\label{eq:minimal-welldefinedness}
\end{equation}
Indeed by definition of strictly p.d. kernel we have,
\begin{equation*}
0 = \left \| \sum_{i=1}^m \alpha_i \psi_{x_i} \right\|_\cH^2 = \sum_{i,j=1}^m \left\langle \alpha_i \psi_{x_i}, \alpha_j \psi_{x_j}  \right\rangle_\cH = \sum_{i,j=1}^m \alpha_i \alpha_j k(x_i, x_j) \implies \alpha_1 = \dots = \alpha_m = 0.
\end{equation*}
Therefore,
\begin{equation*}
\sum_{i=1}^m \alpha_i \psi_{\ff_u(x_i)} = 0,
\end{equation*}
as claimed.

Let $h\in\cH_0$. Choose one representation $h=\sum_{i=1}^m \alpha_i \psi_{x_i}$ and define $\widetilde K_u h$ as in~\eqref{eq:linear-extension-definition}. If also
$h=\sum_{j=1}^k \beta_j \psi_{y_j}$, then subtracting gives
$\sum_{i=1}^m \alpha_i \psi_{x_i}-\sum_{j=1}^k \beta_j \psi_{y_j}=0$, hence by~\eqref{eq:minimal-welldefinedness},
\[
\sum_{i=1}^m \alpha_i \psi_{\ff_u(x_i)}-\sum_{j=1}^k \beta_j \psi_{\ff_u(y_j)}=0,
\]
which shows that the definition is independent of the representation. Linearity follows from~\eqref{eq:linear-extension-definition}. Uniqueness holds because any linear map agreeing with $\widetilde K_u$ on the generators $\{\psi_x\}_{x \in \cX}$ must agree with~\eqref{eq:linear-extension-definition} on all finite linear combinations, i.e., \( \widetilde K_u \psi_x = \psi_{\ff_u(x)} \) for every \( x \in \cX \).

\begin{remark}[Strictly p.d.\ kernel]
The minimal assumption necessary for the linear extension to exist and to be unique can be formulated in terms of~\eqref{eq:minimal-welldefinedness} holding for every finite collection $\{(x_i,\alpha_i)\}_{i=1}^m\subset \cX\times\bbR$. However, we use a stronger assumption on the kernel as a clean requirement that might be easier to verify in practice.
\end{remark}
\subsection{Proof of Proposition \ref{prop:linear-extension-2}}
\label{app:proof-linear-extension-2}
    By Proposition \ref{lem:linear-extension-H0}, each $\widetilde K_u:\cH_0\to\cH$ is a linear operator. Moreover, under Assumption \ref{ass:prekoopman-uniformly-bounded}, each $\widetilde K_u:\cH_0\to\cH$ is bounded, with corresponding operator norm upper bounded by $R_u$. Then, according to the bounded linear extension theorem \cite[Section 2.7]{kreyszig1991introductory}, there exists an extension of $\widetilde K_u:\cH_0\to\cH$ to a linear operator $\cK_u: \overline{\cH_0}\to\cH$ which is bounded with the same constant $R_u$. Moreover, since $\overline \cH_0=\cH$, $\cK_u$ is the unique linear extension of $\widetilde K_u$ on $\cH$. 
\section{Proof of Proposition \ref{prop:closed_form_sol}}
\label{app:closed_form_sol}
In what follows, we consider a fixed \( u \in \cU \). To simplify the notation, we drop \( u \) where the meaning is clear from the context, e.g., \( \ff = \ff_u \), \( \cK = \cK_u \), and \( \rho = \rho_u \).
For a fixed set of points \( \left\{ x_1, \dots, x_n \right\} \), we consider the optimization objective \( \widehat \cR_\gamma: \HS(\cH) \to \bbR \) given by~\eqref{eq:emp-reg-risk},
\begin{equation*}
\widehat \cR_\gamma(W) \coloneqq \frac{1}{n} \sum_{i = 1}^n \bigl\| \psi_{\ff(x_i)} - W \psi_{x_i} \bigr\|_\cH^2 + \gamma \norm{W}_{\HS}^2.
\end{equation*}
The objective is Fr{\'e}chet differentiable on the space of Hilbert--Schmidt operator and the gradient w.r.t. \( W \) is given by
\begin{equation}
\nabla \widehat \cR_\gamma(W) = \frac{2}{n} \sum_{i=1}^n \left( W \psi_{x_i} - \psi_{\ff(x_i)} \right) \otimes \psi_{x_i} + 2 \gamma W.
\label{eq:emp-grad}
\end{equation}
Setting the gradient in~\eqref{eq:emp-grad} to \( 0 \), we recover the first-order optimality condition,
\begin{align*}
\frac{1}{n} \sum_{i=1}^n \left( W \psi_{x_i} - \psi_{\ff(x_i)} \right) \otimes \psi_{x_i} + \gamma W = 0.
\end{align*}
Observe that, by linearity,
\begin{equation*}
\frac{1}{n} \sum_{i=1}^n W \psi_{x_i} \otimes \psi_{x_i} = W \left( \frac{1}{n} \sum_{i=1}^n \psi_{x_i} \otimes \psi_{x_i} \right).
\end{equation*}
Therefore, with the following operators
\begin{equation*}
\widehat \Sigma \coloneqq \frac{1}{n} \sum_{i=1}^n \psi_{x_i} \otimes \psi_{x_i} \in \cL(\cH)
\quad \text{and} \quad
\widehat C \coloneqq \frac{1}{n} \sum_{i=1}^n \psi_{\ff(x_i)} \otimes \psi_{x_i} \in \cL(\cH),
\end{equation*}
for the minimizer \( \widehat \cK_\gamma \) we get
\begin{equation*}
\widehat \cK_\gamma \left( \widehat \Sigma + \gamma I \right) = \widehat C.
\end{equation*}
Equivalently, defining $\widehat\Sigma_\gamma:=\widehat \Sigma + \gamma I$,
\begin{equation}
\widehat \cK_\gamma = \widehat C \, \widehat \Sigma_\gamma^{-1}\label{eq:cf_Kempirical}
\end{equation}
as \( \widehat \Sigma_\gamma \) is always invertible. By strong convexity of \( \widehat \cR_\gamma \), \( \widehat \cK_\gamma \) is the unique global minimizer.

To conclude the proof, introduce the following sampling operators:
\begin{align*}
    &\widehat S:\cH \to \mathbb R^n, \widehat Sh = \frac{1}{\sqrt n}[h(x_1), \dots, h(x_n)]^T,\\
    &\widehat Z:\cH \to \mathbb R^n, \widehat Zh = \frac1{\sqrt n} [h(\ff(x_1)), \dots, h(\ff(x_n))]^T.
\end{align*}
The adjoints of $\widehat S$ and $\widehat Z$ are given by
\begin{equation*}
    \widehat S^*a = \frac1{\sqrt n }\sum_{i=1}^n a_i \psi_{x_i},\  \widehat Z^*a = \frac1{\sqrt n }\sum_{i=1}^n a_i \psi_{\ff(x_i)}.
\end{equation*}
Observe that
\begin{equation*}
    \widehat C h = \frac1n\sum_{i=1}^n\left (\psi_{\ff(x_i)}\otimes \psi_{x_i}\right)h= \frac 1n\sum_{i=1}^n\langle h,\psi_{x_i}\rangle_\cH\psi_{\ff(x_i)}= \widehat Z^*\widehat Sh.
\end{equation*}
Moreover, 
\begin{equation*}
    \widehat \Sigma h = \frac 1n\sum_{i=1}^n\left (\psi_{x_i}\otimes \psi_{x_i}\right)h= \frac 1n\sum_{i=1}^n\langle h,\psi_{x_i}\rangle_\cH\psi_{x_i}= \widehat S^*\widehat Sh.
\end{equation*}
Hence, given $x\in\cX$,
\begin{align*}
    \widehat \cK_\gamma \psi_x&= \widehat Z^*\widehat S(\widehat S^*\widehat S+\gamma I)^{-1}\psi_x\\
    &= \widehat Z^*(\widehat S\widehat S^*+\gamma I)^{-1}\widehat S\psi_x\\
    &=\sum_{i=1}^na_i \psi_{\ff(x_i)}
\end{align*}
where $a_i$ is the $i$-th entry of $(\widehat S\widehat S^*+\gamma I)^{-1}\widehat S\psi_x= (H_{nn} + n\gamma I)^{-1}H_{nx}$, the last equality holding by the reproducing property.
\section{Proof of Theorem \ref{thm:koopman-sample-rate}}
\label{app:proof_of_learning_rate}
In what follows, we consider a fixed \( u \in \cU \). To simplify the notation, we drop \( u \) where the meaning is clear from the context, e.g., \( \ff = \ff_u \), \( \cK = \cK_u \), and \( \rho = \rho_u \).

Recall that, according to \eqref{eq:cf_Kempirical},
\begin{equation*}
\widehat \cK_\gamma = \widehat C \, \widehat \Sigma_\gamma^{-1}.
\end{equation*}

Similarly, for the objective
\begin{equation*}
\cR_\gamma(W) \coloneqq \int \bigl\| \psi_{\,\ff(x)} - W \psi_{x} \bigr\|_\cH^2 \, \rho(dx) + \gamma \norm{W}_{\HS}^2, \quad W \in \HS(\cH),
\end{equation*}
the unique global minimizer \( \cK_\gamma \) is given by \( \cK_\gamma \coloneqq C \, \Sigma_\gamma^{-1} \), where
\begin{equation*}
\Sigma \coloneqq \int \psi_{x} \otimes \psi_{x} \, \rho(dx) \in \cL(\cH)
\quad \text{and} \quad
C \coloneqq \int \psi_{\,\ff(x)} \otimes \psi_{x} \, \rho(dx) \in \cL(\cH)
\end{equation*}
and $\Sigma_\gamma:=\Sigma + \gamma I$.
Observe that \( C = \cK \, \Sigma \). Indeed, for any \( h \in \cH \),
\begin{equation*}
Ch = \int \langle h, \psi_x \rangle_\cH \, \psi_{\,\ff(x)} \, \rho(dx) =  \int \langle h, \psi_x \rangle_\cH \, \cK \psi_x \, \rho(dx) = \cK \int \left( \psi_x \otimes \psi_x \right) h \, \rho(dx) = \cK \, \Sigma h.
\label{eq:k-sigma-limit}
\end{equation*}
Hence, \( \cK_\gamma \coloneqq \cK \, \Sigma \Sigma_\gamma^{-1} \), and therefore, under Assumption~\ref{ass:prekoopman-uniformly-bounded},
\begin{equation}
\norm{\cK_\gamma} \leq \norm{\cK} \cdot \underbrace{\norm{\Sigma \Sigma_\gamma^{-1}}}_{\leq 1} \leq R.
\label{eq:ck-gamma-op}
\end{equation}

\paragraph*{Learning bounds} To derive our learning bound, we consider the following decomposition
\begin{equation}
\widehat \cK_\gamma  - \cK  = \underbrace{\widehat \cK_\gamma - \cK_\gamma}_{\mathcal T_\text{sample}} + \underbrace{\cK_\gamma - \cK}_{\mathcal T_\text{approx}},
\end{equation}
and proceed by analyzing each term separately.

\paragraph*{Approximation error}
Define the (scalar) kernel integral operator $L_k:\cL_\rho^2 \to \cL_\rho^2$ by
\[
(L_k g)(x') \coloneqq \int k(x',x)\, g(x)\,\rho(dx),
\quad g \in \cL_\rho^2.
\]
Under Assumption~\ref{ass:bounded-kernel}, $L_k$ is trace-class, self-adjoint, and positive \cite[Proposition~8]{rosasco2010learning}.
In particular, \( L_k \) is compact and admits the spectral decomposition
\begin{equation*}
L_k = \sum_{i\ge 1} \mu_i\, (u_i \otimes u_i)
\quad \text{in } \cL^2_\rho,
\end{equation*}
where $\mu_i > 0$, $\mu_i \searrow 0$, and $\{u_i\}_{i\ge1}$ is an orthonormal system in $\cL^2_\rho$.

Let \( S : \cH \hookrightarrow \cL^2_\rho \) denote the canonical inclusion operator, \( (Sf)(x)=f(x) \). The covariance operator satisfies \( \Sigma = S^* S \). Then the nonzero spectra of \( L_k \) and \( \Sigma \) coincide, and \( \Sigma \) admits the decomposition
\begin{equation*}
\Sigma = \sum_{i\ge1} \mu_i\, (w_i \otimes w_i)
\quad \text{in } \cH,
\end{equation*}
where
\begin{equation*}
w_i \coloneqq \frac{1}{\sqrt{\mu_i}}\, S^* u_i \in \cH.
\end{equation*}
The system \( \{w_i\}_{i\ge1} \) is orthonormal in \( \cH \), satisfies
\begin{equation*}
\Sigma w_i = \mu_i w_i
\quad \text{and} \quad
\overline{\operatorname{Ran}(\Sigma)} = \overline{\operatorname{span}\left\{ w_i : i \ge 1 \right\}}.
\end{equation*}
Extending $\{w_i\}$ to an orthonormal basis of $\cH$ if necessary, any $\cK \in \HS(\cH)$ admits the Hilbert--Schmidt expansion
\begin{equation*}
\cK = \sum_{i,j\ge1} \alpha_{i,j}\, (w_i \otimes w_j),
\qquad
\norm{\cK}_{\HS}^2 = \sum_{i,j} \alpha_{i,j}^2.
\end{equation*}
Using $\Sigma w_j=\mu_j w_j$ and $\Sigma_\gamma^{-1} w_j = (\mu_j+\gamma)^{-1} w_j$, we obtain
\begin{equation*}
\cK_\gamma
= \cK\,\Sigma \Sigma_\gamma^{-1}
= \sum_{i,j\ge 1} \alpha_{i,j}\,\frac{\mu_j}{\mu_j+\gamma}\,(w_i\otimes w_j),
\end{equation*}
so that
\begin{equation*}
\cK_\gamma-\cK
= -\sum_{i,j\ge 1}\alpha_{i,j}\,\frac{\gamma}{\mu_j+\gamma}\,(w_i\otimes w_j).
\end{equation*}
Therefore,
\begin{equation}
\label{eq:approx-error-basic}
\norm{\cK_\gamma-\cK}_{\HS}^2
= \sum_{i,j\ge 1}\alpha_{i,j}^2\left(\frac{\gamma}{\mu_j+\gamma}\right)^2.
\end{equation}
Writing $\beta_{i,j} \coloneqq \alpha_{i,j}\,\mu_j^{1/2-r}$, we have
\begin{equation*}
\cK\,\Sigma^{1/2-r}=\sum_{i,j}\beta_{i,j}\,(w_i\otimes w_j),\
\norm{\cK\,\Sigma^{1/2-r}}_{\HS}^2=\sum_{i,j}\beta_{i,j}^2.
\end{equation*}
Under Assumption~\ref{ass:koopman-source-condition}, the latter is finite and equal to $G^2$.

Write $\alpha_{i,j}=\mu_j^{r-1/2}\beta_{i,j}$. Plugging this into~\eqref{eq:approx-error-basic} yields
\begin{align*}
\norm{\cK_\gamma-\cK}_{\HS}^2
&= \sum_{i,j}\beta_{i,j}^2\,\mu_j^{2r-1}\left(\frac{\gamma}{\mu_j+\gamma}\right)^2 \\
&= \gamma^{2r-1}
\sum_{i,j}\beta_{i,j}^2
\underbrace{
\left(\frac{\gamma}{\mu_j+\gamma}\right)^{3-2r}
\left(\frac{\mu_j}{\mu_j+\gamma}\right)^{2r-1}
}_{\le 1 \text{ for } r \in (1/2, 1]} \label{eq:factor_out_gamma}\\
&\le \gamma^{2r-1}\sum_{i,j}\beta_{i,j}^2
= \gamma^{2r-1}\,\|\cK\,\Sigma^{1/2-r}\|_{\HS}^2 = \gamma^{2r-1} G^2,
\end{align*}
and so \( \norm{\cK_\gamma-\cK}_{\HS} \leq \gamma^{r-1/2}G \).

\paragraph*{Sampling error}
Observe that
\begin{equation*}
\widehat \cK_\gamma - \cK_\gamma
= \widehat C \, \widehat \Sigma_\gamma^{-1} - \cK_\gamma
= ( \widehat C - \cK_\gamma \, \widehat \Sigma - \gamma \cK_\gamma ) \widehat \Sigma_\gamma^{-1}
= \left[ ( \widehat C - \cK_\gamma \, \widehat \Sigma ) - ( C - \cK_\gamma \, \Sigma ) \right] \widehat \Sigma_\gamma^{-1},
\end{equation*}
where we used the identity $K_\gamma(\Sigma + \gamma I) = C$, and so \( C - \cK_\gamma \, \Sigma = \gamma \cK_\gamma\). Therefore,
\begin{equation}
\norm{ \widehat \cK_\gamma - \cK_\gamma }_\HS
\leq \norm{ ( \widehat C - \cK_\gamma \, \widehat \Sigma ) - \left( C - \cK_\gamma \, \Sigma \right) }_\HS \cdot \norm{ \widehat \Sigma_\gamma^{-1} }.
\label{eq:sample-decomp}
\end{equation}
Note that \( \widehat \Sigma \) is a positive semi-definite operator, so
\begin{equation*}
\widehat \Sigma_\gamma \coloneqq \widehat \Sigma + \gamma I \succeq \gamma I
\; \implies \; \norm{ \widehat \Sigma_\gamma^{-1} } \leq \gamma^{-1}.
\end{equation*}
To bound the norm of the first term, consider the random variable \( \xi: \cX \to \HS(\cH) \)
\begin{equation*}
\xi(x) \coloneqq \left( \psi_{\,\ff(x)} - \cK_\gamma \, \psi_x \right) \otimes \psi_x \in \HS(\cH), \quad x \in \cX.
\end{equation*}
Note that
\begin{equation*}
\frac{1}{n} \sum_{i=1}^n \cK_\gamma \psi_{x_i} \otimes \psi_{x_i} = \cK_\gamma \left(\frac{1}{n} \sum_{i=1}^n \psi_{x_i} \otimes \psi_{x_i}\right) = \cK_\gamma \widehat \Sigma,
\end{equation*}
and, similarly, \( \E_\rho \left[ K_\gamma \psi_{x_i} \otimes \psi_{x_i} \right] = \cK_\gamma \Sigma \). Therefore,
\begin{equation*}
( \widehat C - \cK_\gamma \, \widehat \Sigma ) - \left( C - \cK_\gamma \, \Sigma \right) = \frac{1}{n} \sum_{i=1}^n \left( \xi(x_i) - \E_\rho \, \xi \right).
\end{equation*}
Under Assumptions~\ref{ass:prekoopman-uniformly-bounded}, \ref{ass:bounded-kernel} and by inequality~\eqref{eq:ck-gamma-op}, \( \xi \) is uniformly bounded. Indeed,
\begin{equation*}
\norm{\xi(x)}_\HS = \norm{\left( \psi_{\,\ff(x)} - \cK_\gamma \, \psi_x \right) \otimes \psi_x}_\HS =  \norm{\psi_{\,\ff(x)} - \cK_\gamma \, \psi_x}_\cH \cdot \norm{\psi_x}_\cH \leq \kappa^2 \left(1 + R\right) =: M.
\end{equation*}
Therefore, by convexity of expectation and the norm, \( \norm{\E_\rho \, \xi}_\HS \leq M \), so that by the triangular inequality, \( \norm{\xi - \E_{\rho\,} \xi}_\HS \leq 2 M \). Moreover, \( \HS(\cH) \) is a separable Hilbert space. Therefore, by \cite[Theorem 3.5]{pinelis1994optimum}, for any \( \tau > 0 \), 
\begin{equation*}
\mathbb P \left( \left\| \sum_{i=1}^n \left( \xi(x_i) - \E_\rho \, \xi \right) \right\|_{\HS} \ge \tau \right)
\leq 2 \exp \left(-\frac{\tau^2}{2 \cdot n (2 M)^2 }\right).
\end{equation*}
Denoting the right hand side as \( \delta \) and solving for \( \varepsilon \), we get that the following holds with probability at least \( 1 - \delta \),
\begin{equation}
\| (\widehat C - \cK_\gamma \widehat\Sigma) - \left(C-\cK_\gamma\Sigma\right) \|_{\HS}
= \left\| \frac{1}{n} \sum_{i=1}^n \left( \xi(x_i) - \E_\rho \, \xi \right) \right\|_\HS
\leq 2 M \sqrt{\frac{2 \log(2/\delta)}{n}}.
\label{eq:sample-prob}
\end{equation}
Putting~\eqref{eq:sample-prob} into~\eqref{eq:sample-decomp} yields with probability at least \( 1 - \delta \)
\begin{equation*}
\norm{ \widehat \cK_\gamma - \cK_\gamma }_\HS
\leq \frac{2 M}{ \gamma } \sqrt{\frac{2 \log(2 / \delta)}{n}}.
\end{equation*}
\paragraph*{Overall bound and sample rate} We have that, for every \( \delta \in (0, 1) \), with probability at least \( 1 - \delta \),
\begin{align*}
\norm{\widehat \cK_\gamma  - \cK}_\HS
&\leq \norm{\widehat \cK_\gamma  - \cK_\gamma}_\HS + \norm{\cK_\gamma  - \cK}_\HS \\
&\leq \gamma^{r-1/2} G + \frac{2 M}{ \gamma } \sqrt{\frac{2 \log(2 / \delta)}{n}}.
\end{align*}
Choosing \( \gamma = c n^{-1/(2r+1)}\) for some constant \( c > 0 \), we get
\begin{equation}
\norm{\widehat \cK_\gamma  - \cK}_\HS \lesssim \left( G + 2\sqrt{2} M \right) \sqrt{\log(2/\delta)} \cdot n^{-\frac{2r - 1}{4r + 2}}.
\label{eq:learning-rate}
\end{equation}
To conclude, for every \( u \in \cU \) and \( x \in \cX \) it holds
\begin{equation}
\norm{f(\psi_x, u) - \hat f(\psi_x, u)}_\cH = \norm{ (\cK_u - \widehat \cK_u)  \psi_x }_\cH \leq \norm{\cK_u - \widehat \cK_u} \cdot \norm{\psi_x}_\cH \leq \kappa \norm{\cK_u - \widehat \cK_u}_\HS.
\label{eq:pre-uniform}
\end{equation}
Define
\begin{equation*}
G_* \coloneqq \max_{u \in \cU} G_u\,
\quad \text{and} \quad
M_* \coloneqq \max_{u \in \cU} M_u = \kappa^2 \left( 1 + R_* \right).
\end{equation*}
For \( \delta \in (0, 1) \) set \( \delta_u = \delta / \abs{\cU} \), and apply a union bound on~\eqref{eq:learning-rate}. We get
with probability at least \( 1 - \delta \),
\begin{equation*}
\norm{\widehat \cK_\gamma  - \cK}_\HS \leq \left( G_{*} + 2\sqrt{2} M_{*} \right) \sqrt{\log(2 \abs{\cU}/\delta)} \cdot n^{-\frac{2r - 1}{4r + 2}}.
\end{equation*}
Putting this back into~\eqref{eq:pre-uniform} proves the claim.

\section{Value function bounds}
\label{app:value-function-bounds-revisited}

In this appendix we collect auxiliary results used in the proof of Theorem~\ref{thm:suboptimality_gap}.
Throughout, Assumption~\ref{ass:stage_cost_ctrl} holds. Moreover, we let $\mumpck(t, z)$ be the MPC feedback obtained with a horizon $k\in\{2, \dots, T\}$.

\begin{lemma}[Uniform bound]
\label{lem:value-bound}
Let Assumption \ref{ass:stage_cost_ctrl} hold. Define \( C \coloneqq\sum_{j=0}^\infty \lambda_j \). Then, \(\forall t \in \mathbb{N}_0 \),
\( x \in \cX \), and all \( k \ge 2 \), setting $z=\psi_x$, it holds that
\[
V_k(t,z) \le\ C \, \ell\bigl(t, z, \mu_k(t,z)\bigr).
\]
Moreover,
\[
V_2(t,z) \le C \, V_1(t,z).
\]
\end{lemma}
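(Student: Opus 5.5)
The plan is to build an explicit admissible control sequence of length $k$ and bound its cost using Assumption~\ref{ass:stage_cost_ctrl}; the value function, as an infimum, is then below that cost.

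Fix $t$, $x$ and $k\ge 2$, and set $z=\psi_x$ and $u\coloneqq\mu_k(t,z)$. The first element of the candidate sequence is $u$, played at time $t$. Assumption~\ref{ass:stage_cost_ctrl}, applied with this $t$, $x$ and $u$, provides a sequence $\bar u\in\cU^\infty$ whose trajectory $\bar z_t=z$, $\bar z_{j+1}=f(\bar z_j,\bar u_j)$ satisfies
\[
\ell(j,\bar z_j,\bar u_j)\le\lambda_j\,\ell(t,z,u)\qquad\text{for } j>t .
\]
The candidate is then $\tilde\uf=(u,\bar u_{t+1},\dots,\bar u_{t+k-1})\in\cU^k$.

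The main obstacle is to make the continuation consistent with the first move. The trajectory generated by $\tilde\uf$ must coincide with $\bar z_j$ for $j\ge t+1$, which requires $\bar u_t=u$. I will read the assumption as quantifying over the first control, so that the sequence $\bar u$ begins with the prescribed $u$; this is the natural reading and the one used in \cite{grune2008infinite}. Under that reading the trajectory of $\tilde\uf$ is exactly $\bar z_j$ for $t\le j\le t+k-1$. Summing the stage costs gives
\[
V_k(t,z)\le\mathcal J_k(t,z,\tilde\uf)\le \ell(t,z,u)+\sum_{j=t+1}^{t+k-1}\lambda_j\,\ell(t,z,u)\le\Bigl(1+\sum_{j\ge 1}\lambda_j\Bigr)\ell(t,z,u).
\]
The prefactor should be $C=\sum_{j\ge0}\lambda_j$. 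This needs $\lambda_0\ge 1$, which is implicit when the assumption is also required at $j=t$. Alternatively, I will index the $\lambda$'s relative to $t$ and absorb the first term as $\lambda_0\ge1$, stating this convention explicitly. Because $\mu_k$ is the first element of an optimal sequence, the right-hand side is exactly the claimed $C\,\ell(t,z,\mu_k(t,z))$.

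For the second claim I use the same construction with $k=2$, but now with $u$ a minimizer of $V_1(t,z)=\min_{u}\ell(t,z,u)$. The two-step cost of $(u,\bar u_{t+1})$ is then at most $(1+\lambda_{t+1})\,\ell(t,z,u)\le C\,V_1(t,z)$.
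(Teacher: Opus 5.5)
Your proof is correct and follows the paper's argument. The paper likewise takes the sequence $\bar u$ from Assumption~\ref{ass:stage_cost_ctrl} with $u=\mu_k(t,z)$, and with $u=\mu_1(t,z)$ for the second claim. The only difference is that it bounds $V_k(t,z)\le V_\infty(t,z)\le\sum_{j\ge t}\ell(j,\bar z_j,\bar u_j)$ instead of truncating to $k$ steps. You make two conventions explicit: that $\bar u$ starts with the prescribed $u$, and that the first stage cost is absorbed into $C$ via $\lambda_0\ge 1$ with $\lambda$ indexed relative to $t$. The paper relies on both silently in its step $\sum_{j\ge t}\ell(j,\bar z_j,\bar u_j)\le C\,\ell(t,z,u)$, so your version is the more careful of the two.
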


\begin{proof}
By Assumption \ref{ass:stage_cost_ctrl}, for any \( z =\psi_x,\ x\in\cX \) and \( u \in \cU \),
there exists a control sequence \( \bar u \in \cU^\infty \) such that
\[
\ell(j, \bar z_j, \bar u_j) \le \lambda_j \, \ell(t, z, u),
\quad j > t,
\]
where \( \bar z_{j+1} = f(\bar z_j, \bar u_j) \), \( \bar z_t = z \),
and \( \{ \lambda_j \}_{j \ge t} \in \ell^1 \).

According to the definition of $C$,
\begin{equation}
V_k(t,z)
\le V_\infty(t,z)
\le \sum_{j=t}^{\infty} \ell(j, \bar z_j, \bar u_j)
\le C \, \ell(t, z, u).\label{eq:value_function_upperbound}
\end{equation}
Choosing \( u = \mu_k(t,z) \) yields the claim for all \( k \ge 2 \).
For \( k = 2 \), the same argument with \( u = \mu_1(t,z) \)
gives \( V_2(t,z) \leq C \, \ell(t, z, \mu_1(t,z))= C V_1(t,z) \).
\end{proof}

\begin{corollary} For \( (t, z) \in \bbN_0 \times \cH \) and $k\geq 2$,
\[
V_k(t, z) = 0 \implies V_{k'} (t, z) = V_\infty(t, z) = 0,\ \forall k' > k.
\]
\label{cor:v-k-zero}
\end{corollary}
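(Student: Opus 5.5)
The corollary follows from Lemma \ref{lem:value-bound} and the nonnegativity of the stage cost. The plan is to show that $V_k(t,z)=0$ forces a zero stage cost at time $t$, and then to use the uniform bound $V_\infty(t,z)\le C\,\ell(t,z,u)$, valid for every $u$, which holds under Assumption \ref{ass:stage_cost_ctrl}. Here $z=\psi_x$ for some $x\in\cX$, as in the setting of Lemma \ref{lem:value-bound}.

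\textbf{Step 1: zero value forces a zero first stage cost.} Assume $V_k(t,z)=0$ with $k\ge 2$. Since $k<\infty$, the infimum in \eqref{eq:def-vT} is attained at some $\uf^*\in\cU^k$ with first element $\mu_k(t,z)$. All summands in $\cJ_k(t,z,\uf^*)$ are nonnegative and their sum is zero, so each summand vanishes. In particular,
\begin{equation*}
\ell\bigl(t,z,\mu_k(t,z)\bigr)=0 .
\end{equation*}

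\textbf{Step 2: the infinite-horizon value vanishes.} Apply inequality \eqref{eq:value_function_upperbound} from the proof of Lemma \ref{lem:value-bound} with $u=\mu_k(t,z)$. This gives
\begin{equation*}
0\le V_\infty(t,z)\le C\,\ell\bigl(t,z,\mu_k(t,z)\bigr)=0 ,
\end{equation*}
so $V_\infty(t,z)=0$.

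\textbf{Step 3: all intermediate horizons vanish.} For any horizon $k'>k$ I will use the monotonicity
\begin{equation*}
0\le V_{k'}(t,z)\le V_\infty(t,z).
\end{equation*}
This holds because truncating any infinite control sequence to its first $k'$ entries gives a partial sum that is no larger than the full sum, since $\ell\ge 0$. Combined with Step 2, it yields $V_{k'}(t,z)=0$, which is the claim.

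The argument is elementary, and the only point needing care is Step 1. There I must make sure the first element of the attaining finite-horizon sequence is exactly the feedback $\mu_k(t,z)$ that appears in Lemma \ref{lem:value-bound}. I must also make sure Assumption \ref{ass:stage_cost_ctrl} may be invoked at $z$, which requires $z$ to be a lifted state $\psi_x$. If the corollary is intended for general $z\in\cH$, I would restrict it to $z\in\Psi(\cX)$, consistent with the lemma it relies on.
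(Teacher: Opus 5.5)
Your proof is correct and follows the paper's argument essentially verbatim: attainment of the minimum over the finite set $\cU^k$ forces the first stage cost to vanish, and then $0\le V_{k'}(t,z)\le V_\infty(t,z)\le C\,\ell(t,z,\tilde u_0)=0$ follows from \eqref{eq:value_function_upperbound}. Since that inequality holds for every $u\in\cU$, you do not need to identify the minimizer's first entry $\tilde u_0$ with $\mu_k(t,z)$. Your caveat that the argument requires $z=\psi_x$ is legitimate: the paper states the corollary for all $z\in\cH$, but its proof relies on the same inequality, which is only established for lifted states.
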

\begin{proof}
By definition of \( V_k \) (see~\ref{eq:def-vT}), there exists \( \tilde u \in \cU^k \) such that \( \cJ_k(t,z, \tilde u) = 0 \) (infinum taken over finite set \( \cU^k \) is always attained). Since \( \ell \ge 0 \), \( \ell(t,z, \tilde u_0) = 0 \). By \eqref{eq:value_function_upperbound}, \(0\leq V_{k'}(t, z) \leq V_\infty(t,z) \leq C \ell(t,z,\tilde u_0) = 0 \), therefore \( V_{k'}(t, z) = V_\infty(t,z) = 0 \).
\end{proof}

\begin{lemma}[Relaxed Lyapunov inequality]
\label{lem:relaxed-lyapunov}
Let \( C \) be as in Lemma~\eqref{lem:value-bound} and assume
\[
T \ge \frac{2 \ln C}{\ln C - \ln(C-1)} .
\]
Define
\[
\alpha \coloneqq 1 - \frac{(C-1)^T}{C^{T-2}} .
\]
Then, for all \( t \in \mathbb{N}_0 \) and \( z \in \cH \),
\[
V_T\bigl(t+1, f(z, \mu_T(t,z))\bigr) - V_T(t,z)
\le - \alpha \, \ell\bigl(t, z, \mu_T(t,z)\bigr).
\]
\end{lemma}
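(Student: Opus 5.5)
The plan is to follow the classical Grüne--Rantzer argument. It exploits the dynamic programming principle along an optimal open-loop trajectory and uses Lemma~\ref{lem:value-bound} to make the tail costs decay geometrically. Fix $t\in\mathbb N_0$ and a lifted state $z=\psi_x$; the inequality is only needed, and Lemma~\ref{lem:value-bound} only applies, on lifted states. Since $f(\psi_x,u)=\psi_{\ff_u(x)}$, every state $z_j$ reached below is again of the form $\psi_{x_j}$, so the lemma can be applied along the trajectory. Let $\uf^*\in\cU^T$ attain $V_T(t,z)$, let $z_0=z$, $z_{j+1}=f(z_j,u^*_j)$, and write $\ell_j\coloneqq \ell(t+j,z_j,u^*_j)$ for $j=0,\dots,T-1$. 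Take $\mu_T(t,z)=u^*_0$, so that $z_1=f(z,\mu_T(t,z))$.

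\textbf{Step 1 (geometric decay of the tails).} Set $S_k\coloneqq\sum_{j=k}^{T-1}\ell_j$. By the principle of optimality, the tail of $\uf^*$ is optimal for the shorter problem, so $S_k=V_{T-k}(t+k,z_k)$. Applying Lemma~\ref{lem:value-bound} in the form of \eqref{eq:value_function_upperbound}, which holds with an arbitrary control $u$ (here $u=u^*_k$), gives $S_k\le C\,\ell_k=C(S_k-S_{k+1})$. Rearranging, $S_{k+1}\le \frac{C-1}{C}S_k$ for all $k\le T-2$. The same bound at $k=0$ gives $S_1=S_0-\ell_0\le (C-1)\ell_0$. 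Iterating, $\ell_{T-1}=S_{T-1}\le\left(\frac{C-1}{C}\right)^{T-2}S_1\le \frac{(C-1)^{T-1}}{C^{T-2}}\,\ell_0$.

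\textbf{Step 2 (candidate for $V_T$ at the successor state).} Next I bound $V_T(t+1,z_1)$ by the following admissible concatenation. First apply $u^*_1,\dots,u^*_{T-2}$ from $z_1$, which reaches $z_{T-1}$ at time $t+T-1$. Then append an optimal two-step continuation from that point. This yields
\begin{equation*}
V_T(t+1,z_1)\le \sum_{j=1}^{T-2}\ell_j + V_2(t+T-1,z_{T-1}) \le S_1-\ell_{T-1}+C\,\ell_{T-1},
\end{equation*}
where again \eqref{eq:value_function_upperbound} is used with $u=u^*_{T-1}$. Since $S_1=V_T(t,z)-\ell_0$, this becomes $V_T(t+1,z_1)-V_T(t,z)\le -\ell_0+(C-1)\ell_{T-1}$.

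\textbf{Step 3 (conclusion).} Plugging in Step 1 gives $(C-1)\ell_{T-1}\le \frac{(C-1)^T}{C^{T-2}}\ell_0=(1-\alpha)\ell_0$. Hence $V_T(t+1,z_1)-V_T(t,z)\le-\alpha\,\ell(t,z,\mu_T(t,z))$. The horizon condition $T\ge 2\ln C/(\ln C-\ln(C-1))$ is exactly $\left(\frac{C-1}{C}\right)^T\le C^{-2}$, so it only serves to guarantee $\alpha\ge 0$. Degenerate cases such as $C\le 1$ or vanishing costs (cf.\ Corollary~\ref{cor:v-k-zero}) pass through the same inequalities without division.

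I expect the main obstacle to be the bookkeeping in Step 1. It requires that the tail of the $T$-horizon optimal sequence is optimal for $V_{T-k}$ at the shifted time, which holds because the cost is additive and time indices are tracked explicitly. It also requires that the bound of Lemma~\ref{lem:value-bound} holds with an arbitrary first control rather than only the MPC feedback, which is why I would invoke \eqref{eq:value_function_upperbound} directly. A smaller point is that Step 2 uses $T\ge 2$, so that the concatenation has nonnegative length.
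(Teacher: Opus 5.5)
\textbf{Verdict.} Your argument is correct for lifted $z=\psi_x$ and $T\ge 2$. It reaches the same constant $\alpha$ by a different route from the paper's.

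\textbf{The paper's route.} The paper argues uniformly over the family of value functions. It defines the worst-case relative increment $r_k=\sup (V_k-V_{k-1})/V_{k-1}$ over all pairs $(s,z)$ with $V_k(s,z)>0$. From Bellman's equation and Lemma~\ref{lem:value-bound} it proves $r_{k+1}\le\frac{C-1}{C}r_k$, and it starts the recursion from $r_2\le C-1$. This yields $V_T(t+1,z_1)-V_{T-1}(t+1,z_1)\le\frac{(C-1)^{T-1}}{C^{T-2}}V_{T-1}(t+1,z_1)$, which it then combines with $V_T(t,z)=\ell_0+V_{T-1}(t+1,z_1)$ and $V_{T-1}(t+1,z_1)\le (C-1)\ell_0$.

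\textbf{Your route.} You obtain exactly the same intermediate estimate pointwise, along the single optimal open-loop trajectory. Your concatenation gives $V_T(t+1,z_1)-V_{T-1}(t+1,z_1)\le (C-1)\ell_{T-1}$. Your tail decay gives $(C-1)\ell_{T-1}\le\frac{(C-1)^{T-1}}{C^{T-2}}S_1$, with $S_1=V_{T-1}(t+1,z_1)$.

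\textbf{What each buys.} The paper's induction on the horizon gives a state-independent geometric estimate of $V_{k+1}-V_k$ relative to $V_k$, which is reusable as a convergence statement for $V_k$. The price is that it must handle the supremum and the case $V_k=0$ (Corollary~\ref{cor:v-k-zero}). Moreover, as written, it applies Lemma~\ref{lem:value-bound} at arbitrary $z\in\cH$, which Assumption~\ref{ass:stage_cost_ctrl} does not cover. Restricting the supremum to $\Psi(\cX)$, which $f$ leaves invariant, repairs this.

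Your argument is more elementary. It needs no division by value functions and no zero-case analysis. It evaluates the value bound only at states reachable from $\psi_x$, so your restriction to lifted states is exactly what the assumptions support and what Theorem~\ref{thm:suboptimality_gap} uses. It also exposes the sharper trajectory-dependent bound $-\ell_0+(C-1)\ell_{T-1}$. Finally, it makes transparent that the horizon condition only ensures $\alpha\ge 0$. Both proofs genuinely need $T\ge 2$, which that condition does not enforce when $C<2$, so you are right to flag it.

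\textbf{Small correction.} Chaining $S_{k+1}\le\frac{C-1}{C}S_k$, and multiplying by $C-1$ in Step 3, uses $C\ge 1$. So your remark that degenerate cases with $C\le 1$ go through ``without division'' is not accurate. This is harmless, because $C>1$ is implicit in the hypothesis through $\ln(C-1)$.
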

\begin{proof}
Fix \(t \in \bbN_0\) and \(z_t \in \cH\), and define \( u_t \coloneqq \mu_T(t,z_t) \) and \( z_{t+1} \coloneqq f(z_t,u_t) \).  We start by bounding
\(
V_T(t+1,z_{t+1}) - V_{T-1}(t+1,z_{t+1})
\)
in terms of \(\ell(t,z_t,u_t)\).

Fix $k\geq 2$. Firstly note that if, \(\forall (s, z) \in \mathbb N_0 \times \cH,  V_{k}(s, z) = 0 \), by Corollary~\eqref{cor:v-k-zero} \( V_{k+1}(s,z)  = 0 \), hence the inequality
\begin{equation}
V_{k+1}(t+1,z_{t+1}) - V_{k}(t+1,z_{t+1})\leq a \cdot \ell(t, z_t, u_t).\label{eq:decrease_Vk_null}
\end{equation}
holds with any \( a \geq 0\), hence for $a=\frac{(C-1)^{k+1}}{C^{k-1}}$.

Conversely, consider now the case in which $\exists (s', z') \in \mathbb N_0 \times \cH$ s.t.\ $V_k(s', z') > 0$. Note that, according to the contrapositive of Corollary \ref{cor:v-k-zero}, $V_{k}(s, z) > 0 \implies V_{k-1}(s, z) > 0$. Furthermore, since stage costs are nonnegative, \(V_{k-1}(s,z) \le V_k(s,z) \).
Define
\begin{equation}
\label{eq:rk-sup}
r_k \coloneqq \sup \left\{ \frac{V_k(s,z) - V_{k-1}(s,z)}{V_{k-1}(s,z)}: \; (s,z) \in \bbN_0 \times \cH, \, V_k(s, z) > 0 \right\} \in [0,\infty].
\end{equation}

Now pick arbitrary $(s,z)\in\mathbb N_0 \times \mathcal H$. Observe that, by Bellman's optimality,
\begin{equation}
V_k(s,z) = \ell\bigl(s, z, \mu_k(s,z)\bigr) + V_{k-1}\bigl(s+1, f(z, \mu_k(s,z))\bigr).\label{eq:bellman_step}
\end{equation}
By \eqref{eq:bellman_step} and Lemma~\ref{lem:value-bound},
\begin{equation}
V_{k-1}\bigl(s+1, f(z, \mu_k(s,z))\bigr)
\le (C-1) \, \ell\bigl(s, z, \mu_k(s,z)\bigr).\label{eq:value_f_stage_cost_bound}
\end{equation}

\medskip \noindent
By Bellman,
\begin{equation}
V_{k+1}(s,z)
= \min_{u \in \cU} \Bigl\{ \ell(s,z,u) + V_k(s+1,f(z,u)) \Bigr\}
\le \ell\bigl(s,z,\mu_k(s,z)\bigr) + V_k\bigl(s+1,f(z,\mu_k(s,z))\bigr).
\label{eq:bellman-step-2}
\end{equation}
If $V_k\bigl(s+1,f(z,\mu_k(s,z))\bigr) > 0$, using the definition of \(r_k\) at time \(s+1\) and state $f(z, \mu_k(s,z))$,
\begin{equation}
V_k\bigl(s+1,f(z,\mu_k(s,z))\bigr)
\le (1 + r_k) \, V_{k-1}\bigl(s+1,f(z,\mu_k(s,z))\bigr).
\label{eq:single-step-rec}
\end{equation}
Else, if $V_k\bigl(s+1,f(z,\mu_k(s,z))\bigr)=0$, \eqref{eq:single-step-rec} holds trivially. Moreover, from \eqref{eq:bellman_step},
\begin{equation}
V_{k-1}(s+1,f(z,\mu_k(s,z))) = V_k(s,z) - \ell(s,z,\mu_k(s,z)).
\label{eq:bellman_vk_1}
\end{equation}
Starting from \eqref{eq:bellman-step-2}, using \eqref{eq:single-step-rec} and \eqref{eq:bellman_vk_1} yields
\begin{align*}
V_{k+1}(s,z)
&\le \ell\bigl(s,z,\mu_k(s,z)\bigr) + (1 + r_k) \bigl(V_k(s,z) - \ell\bigl(s,z,\mu_k(s,z)\bigr)\bigr) \\
&= (1 + r_k) V_k(s,z)  - r_k \,\ell\bigl(s,z,\mu_k(s,z)\bigr).
\end{align*}
Consequently
\[
V_{k+1}(s,z) - V_k(s,z) \leq r_k \, V_k(s,z)  - r_k \, \ell\bigl(s,z,\mu_k(s,z)\bigr).
\]
Using the bound
\(
V_k(s,z) \le C\,\ell(s,z,\mu_k(s,z))
\) from Lemma~\ref{lem:value-bound}, and multiplying by the coefficient \(- r_k / C \le 0 \) yields
\[
-r_k \,\ell\bigl(s,z,\mu_k(s,z)\bigr)
\le - \frac{r_k}{C} V_k(s,z).
\]
Therefore,
\begin{equation}
V_{k+1}(s,z) - V_k(s, z) \le \left(r_k-\frac{r_k}{C}\right)V_k(s,z)
= \frac{C-1}{C}\,r_k V_k(s,z).
\label{eq:diff-via-r}
\end{equation}
If $V_k(s, z) > 0$, we can divide and obtain
\begin{equation}
    \frac{V_{k+1}(s, z) - V_k(s, z)}{V_k(s, z)}\leq \frac{C-1}{C} \, r_k.
\end{equation}
Note that since $0< V_k(s, z) \leq V_{k+1}(s, z)$, $r_{k+1}$ is well defined. By taking supremum~\eqref{eq:rk-sup} at \( k+1 \), we obtain that 
\begin{equation}
r_{k+1} \leq \frac{C-1}{C}r_k.
\end{equation}
Note that, according to the contrapositive of Corollary \ref{cor:v-k-zero}, $V_k(s, z) > 0 \implies V_{k-1}(s, z) > 0 \implies \dots \implies V_2(s, z) > 0\implies V_1(s, z)>0$. Therefore, $r_2$ is well defined (and so is $r_{k'},\ k'< k$).
Iterating the recursion gives
\begin{equation}
r_k \le \left(\frac{C-1}{C}\right)^{k-2} r_2.
\label{eq:rT-bound}
\end{equation}
The condition \(V_2(s, z)\le C V_1(s, z)\) from Lemma \ref{lem:value-bound} implies 
\begin{equation*}
    \frac{V_2(s, z) - V_1(s, z)}{V_1(s, z)} \leq C -1.
\end{equation*}
which gives \( r_2 \le C - 1 \) by taking the supremum over the set $\{(s,z) \in \bbN_0 \times \cH\ |\ V_{2}(s, z) > 0\}$. Using the upper bound in~\eqref{eq:rT-bound},
\begin{equation*}
r_k \le \left(\frac{C-1}{C}\right)^{k-2}(C-1)= \frac{(C-1)^{k-1}}{C^{k-2}}.
\end{equation*}
Using \eqref{eq:diff-via-r} and \eqref{eq:rT-bound}, we get
\begin{equation}
    V_{k+1}(s,z) - V_k(s, z) \le \frac{C-1}{C}\frac{(C-1)^{k-1}}{C^{k-2}}V_k(s, z)\leq \frac{(C-1)^{k}}{C^{k-1}}V_k(s, z). \label{eq:vk_bound_in_k}
\end{equation}
Conversely, if $V_k(s, z) = 0$, according to the proof of Lemma \ref{lem:value-bound} and Corollary \ref{cor:v-k-zero}, \eqref{eq:vk_bound_in_k} trivially holds.

To conclude, set $k = T-1$, $s = t+1$, $z = z_{t+1}$. Using \eqref{eq:decrease_Vk_null} resp.\  \eqref{eq:vk_bound_in_k} together with \eqref{eq:value_f_stage_cost_bound} yields
\[
V_T(t+1, z_{t+1}) - V_{T-1}(t+1, z_{t+1}) \leq \frac{(C-1)^{T-1}}{C^{(T-1)-1}}V_{T-1}(t+1, z_{t+1})
\le \frac{(C-1)^{T}}{C^{T-2}} \, \ell\bigl(t, z_t, \mu_T(t,z_t)\bigr).
\]
Combining this with the Bellman recursion for \( V_T \) yields
\begin{align*}
    V_T(t, z_t) &= \min_u\{l(t, z_t, u) + V_{T-1}(t+1, f(z_t, u))\}\\
    &= l(t, z_t, \mumpc(t, z_t)) + V_{T-1}(t+1, f(z_t, \mumpc(t, z_t))\\
    &\geq V_T(t+1, f(z_t, \mumpc(t, z_t)))  + \left[1- \frac{(C-1)^{T}}{C^{T-2}}\right]l(t, z_t, \mumpc(t, z_t)),
\end{align*}
which proves the claim.
\end{proof}

\subsection{Proof of Theorem~\ref{thm:suboptimality_gap}}
\label{sec:proof_exact_mpc_suboptimality}
Let \( z_{t+1} = f(z_t, \mu_T(t,z_t)) \).
By Lemma~\ref{lem:relaxed-lyapunov},
\[
\alpha \, \ell\bigl(t, z_t, \mu_T(t,z_t)\bigr)
\le V_T(t,z_t) - V_T(t+1,z_{t+1}).
\]
Summing from \( t=0 \) to \( K-1 \) gives
\[
\alpha \sum_{t=0}^{K-1} \ell\bigl(t, z_t, \mu_T(t,z_t)\bigr)
\le V_T(0,z_0) - V_T(K,z_K)
\le V_T(0,z_0),
\]
since \( V_T(K,z_K) \ge 0 \).
Letting \( K \to \infty \) yields
\[
\alpha \, J_\infty(0, z_0, u_T^{\mathrm{MPC}})
\le V_T(0,z_0)
\le V_\infty(0,z_0),
\]
which concludes the proof.

\section{Robust performance bound}
\subsection{Proof of Proposition \ref{prop:VT_lipschitz_rho}}
\label{app:V_lipschitz_rho}
Fix $t \in \bbN_0$ and $z,z' \in \cD_0$ (see definition in~\eqref{eq:def-cd-0}). For any control sequence $\bar u=(u_0,\dots,u_{T-1}) \in \cU^T$, let $\{z_t\}_{t=0}^{T-1}$ and $\{z'_t\}_{t=0}^{T-1}$ be the trajectories initialized at $z_0=z$ and $z'_0=z'$ and driven by the same sequence $\bar u$ under the same dynamics \( f \).

Note that for every \( u \in \cU \), we have
\begin{equation}
\norm{ f(z, u) - f(z', u) }_\cH = \norm{ \cK_u \left( z - z' \right) }_\cH \leq R_* \norm{ z - z' }_\cH.
\label{eq:ass_lipschitz_f}
\end{equation}

By \eqref{eq:ass_lipschitz_f} and induction,
\begin{equation}
\label{eq:traj_lip}
\norm{z_{t}-z'_{t}}_{\cH}
\le R_*^{\,t} \, \norm{z_0-z_0'}_{\cH}
= R_*^{\,t} \, \norm{z-z'}_{\cH},
\quad t=0,1,\dots,T-1.
\end{equation}
Note that $z_t$ and $z_{t}'$ belong to $\cD_t$. Using \eqref{eq:ass_time_dep_lipschitz_ell} and \eqref{eq:traj_lip},
\begin{align*}
\bigl|\cJ_T(0,z,\bar u)-\cJ_T(0,z',\bar u)\bigr|
&= \left|\sum_{t=0}^{T-1}\bigl(\ell(t,z_{t},u_{t})-\ell(t,z'_{t},u_{t})\bigr)\right| \\
&\le \sum_{t=0}^{T-1} \bigl|\ell(t,z_{t},u_{t})-\ell(t,z'_{t},u_{t})\bigr| \\
&\le \sum_{t=0}^{T-1} L_{t}\,\|z_{t}-z'_{t}\|_{\cH} \\
&\le \left(\sum_{t=0}^{T-1} L_{t}\,R_*^{\,t}\right)\,\|z-z'\|_{\cH}
=: L_{V_T} \|z-z'\|_{\cH}.
\end{align*}
Since the above bound holds for every $\bar u \in \cU^T$, for $u'\in\arg\min_{u} \cJ_T(0, z', u)$, we get
\begin{align}
V_T(0,z)-V_T(0,z')
&= \min_{u}\cJ_T(0,z,u)-\min_{u}\cJ_T(0,z',u) \nonumber\\
&\le \cJ_T(0,z,u')-\cJ_T(0,z',u')\nonumber\\
&\le \lvert \cJ_T(0,z,u')-\cJ_T(0,z',u') \rvert\nonumber\\
&\le L_{V_T} \|z-z'\|_{\cH}.
\label{eq:VT0_lipschitz_with_ct}
\end{align}
Exchanging the roles of $z$ and $z'$, we get that, for every $z,z' \in \cD_0$,
\begin{align}
\lvert V_T(0,z)-V_T(0,z') \rvert
&\le L_{V_T} \|z-z'\|_{\cH}.
\end{align}

Consider now a state $x\in\cX$. Consider the lifted states $f(\psi_x, u)$ and $\hat f(\psi_x, u)$. Recall that 
\begin{align*}
    \norm{f(\psi_x, u)}_\cH =\norm{\cK_u\psi_x}_\cH = \norm{\psi_{\ff_u(x)}}_\cH\leq \kappa\implies f(\psi_x, u)\in\cD_0\\
    \norm{\hat f(\psi_x, u)}_\cH = \norm{\widehat \cK_u\psi_x}_\cH \leq \eta\,  \kappa \implies \hat f(\psi_x, u)\in\cD_0.
\end{align*}
Hence, applying \eqref{eq:VT0_lipschitz_with_ct} at the points $f(\psi_x,u)$ and $\hat f(\psi_x,u)$ gives
\begin{align*}
V_T\bigl(0,f(\psi_x,u)\bigr)-V_T\bigl(0,\hat f(\psi_x,u)\bigr)
&\le \bigl|V_T\bigl(0,f(\psi_x,u)\bigr)-V_T\bigl(0,\hat f(\psi_x,u)\bigr)\bigr| \\
&\le L_{V_T} \|f(\psi_x,u)-\hat f(\psi_x,u)\|_{\cH},
\end{align*}
which is the claimed result with the stated $\rho_T$.
\subsection{Proof of Theorem \ref{thm:alpha-perf-nonhomogeneous-v2}}
\label{appendix:robust_ctrl_results}
We report here the proof of our robust performance bound. Fix a horizon \( T \ge 2 \) and \( n \ge 1 \).  Let the MPC law \( \hat\mu_T \) be computed on the learned model \( \hat f \), according to Algorithm~\ref{alg:mpc_approx}. 

Let \(z_0 =\psi_{x_0}\in\cH_0\) for some $x_0\in\cX$. Define the  true closed-loop trajectory recursively as
\[
z_{k+1} = f\bigl(z_k,\hat\mu_T(k,z_k)\bigr),  \quad k \in \bbN_0.
\]
For brevity, denote $\widehat \alpha \coloneqq \widehat\alpha_T$, $\xi \coloneqq \widehat\xi_T$, and 
\[
V^{(k)} \coloneqq V_T(0,z_k), \quad  \ell_k \coloneqq \ell\bigl(0,z_k,\hat\mu_T(k,z_k)\bigr).
\]

Note that we have \(z_k \in \cH_0\) for all \(k\) according to \eqref{eq:prekoopman}. Hence, using Assumptions~\ref{ass:value_function_decay_robust} and \ref{ass:value_function_lipschitz_cont}  for each \(k\), we get
\begin{align*}
V^{(k+1)} - V^{(k)} &= V_T\bigl(0,f(z_k,\hat\mu_T(k,z_k))\bigr) - V_T(0,z_k) 
\le - \widehat\alpha \,\ell_{k}  + \xi +  \rho_T\!\left( \norm{z_{k+1} - \hat f(z_k,\hat\mu_T(k,z_k))}_\cH \right).
\end{align*}

On the event where \eqref{eq:model-mismatch-bound} holds, the monotonicity of \(\rho\) implies
\[
\rho_T\!\left( \norm{z_{k+1} - \hat f(z_k,\hat\mu_T(k,z_k))}_\cH \right)
= \rho_T\!\left( \norm{f(z_k,\hat\mu_T(k,z_k)) - \hat f(z_k,\hat\mu_T(k,z_k))}_\cH \right)
\le \rho_T\bigl(\varepsilon(\delta)\bigr),
\quad \forall k \in \bbN_0.
\]
Therefore, with probability at least \(1-\delta\),
\begin{equation}
\label{eq:perturbed-one-step-rho}
V^{(k+1)} - V^{(k)}  \le - \widehat\alpha \,\ell_k + \xi+\rho_T\bigl(\varepsilon(\delta)\bigr),
\quad \forall k \in \bbN_0 .
\end{equation}

Now, multiply \eqref{eq:perturbed-one-step-rho} from Assumption \ref{ass:decay_stage_cost} by \(\beta_k > 0\) and sum for \(k = 0,\dots,N-1\). With probability at least \(1-\delta\),
\begin{equation}
\label{eq:discount-telescope-rho}
\sum_{k=0}^{N-1} \beta_k (V^{(k+1)}-V^{(k)})
\le - \widehat\alpha \sum_{k=0}^{N-1} \beta_k \,\ell_k + \xi \sum_{k=0}^{N-1} \beta_k+ \rho\bigl(\varepsilon(\delta)\bigr) \sum_{k=0}^{N-1} \beta_k .
\end{equation}
We can rewrite the left-hand side as follows:
\begin{equation*}
\sum_{k=0}^{N-1} \beta_k (V^{(k+1)}-V^{(k)})
= - \beta_0 V^{(0)} - \sum_{k=1}^{N-1} (\beta_k - \beta_{k-1}) V^{(k)} + \beta_{N-1} V^{(N)}.
\end{equation*}
Since \(\{ \beta_k \}_{k \ge 0}\) is non-increasing, \(\beta_k > 0\), and \(V^{(k)} \ge 0\), it holds
\begin{equation*}
- \sum_{k=1}^{N-1} (\beta_k - \beta_{k-1}) V^{(k)} \ge 0,
\quad \text{and} \quad \beta_{N-1} V^{(N)} \ge 0,
\end{equation*}
and hence
\begin{equation*}
\sum_{k=0}^{N-1} \beta_k (V^{(k+1)}-V^{(k)}) \ge - \beta_0 V^{(0)}.
\end{equation*}
Combining this lower bound with the inequality~\eqref{eq:discount-telescope-rho} gives, 
with probability at least \(1-\delta\),
\begin{equation*}
- \beta_0 V^{(0)} 
\le - \widehat\alpha \sum_{k=0}^{N-1} \beta_k \,\ell_k 
   + \xi \sum_{k=0}^{N-1} \beta_k+ \rho_T\bigl(\varepsilon(\delta)\bigr) \sum_{k=0}^{N-1} \beta_k,
\end{equation*}
and therefore
\begin{equation*}
\widehat\alpha \sum_{k=0}^{N-1} \beta_k \,\ell_k 
\le \beta_0 V^{(0)} + \xi \sum_{k=0}^{N-1} \beta_k + \rho_T\bigl(\varepsilon(\delta)\bigr) \sum_{k=0}^{N-1} \beta_k.
\end{equation*}
We can let \( N \to \infty\) and use Assumption~\ref{ass:decay_stage_cost}, which ensures
\(\sum_{k=0}^{\infty} \beta_k =: B < \infty\), to obtain
\begin{equation}
\label{eq:infinite-sum-rho}
\widehat\alpha \sum_{k=0}^{\infty} \beta_k \,\ell_k
\le \beta_0 V_T(0,z_0) + B \left(\xi + \,\rho_T\bigl(\varepsilon(\delta)\bigr)\right).
\end{equation}

Recall the definition of the infinite-horizon cost associated to the approximate MPC control 
sequence \(\hat \uf^\mpc_T\), from Algorithm~\ref{alg:mpc_approx}:
\begin{equation}
\label{eq:infinite-j-upper-bound}
\cJ_\infty(0,z_0,\hat \uf^\mpc_T) 
\coloneqq \sum_{k=0}^{\infty} \ell\bigl(k, z_k, \hat \mu_T(k, z_k)\bigr)
\le \sum_{k=0}^{\infty} \beta_k \,\ell_k,
\end{equation}
where the last inequality is due to Assumption~\ref{ass:decay_stage_cost}. 
Using \(\beta_0 = 1\) in \eqref{eq:infinite-sum-rho} and the inequality~\eqref{eq:infinite-j-upper-bound}, we obtain
\begin{equation*}
\widehat \alpha \, \cJ_\infty(0,z_0,\hat \uf^\mpc_T) 
\le V_T(0,z_0) + B (\xi + \,\rho_T\bigl(\varepsilon(n,\delta)\bigr)).
\end{equation*}
Dividing by \(\widehat \alpha\) yields, with probability at least \(1-\delta\),
\begin{equation*}
\cJ_\infty(0,z_0,\hat \uf^\mpc_T) 
\le \frac{1}{\widehat \alpha} \, V_T(0,z_0) 
   + \frac{B}{\widehat \alpha} (\xi + \,\rho_T\bigl(\varepsilon(n,\delta)\bigr)).
\end{equation*}
Finally, by Bellman's principle of optimality, \(V_T(0,z_0) \le V_\infty(0,z_0)\), so
\begin{equation*}
\cJ_\infty(0,z_0,\hat \uf^\mpc_T) 
\le \frac{1}{\widehat \alpha} \, V_\infty(0,z_0) 
   + \frac{B}{\widehat \alpha} \,(\xi + \rho_\infty\bigl(\varepsilon(n,\delta)\bigr)),
\end{equation*}
which proves the claim. 

\section{Properties of the stage cost}
\label{appendix:lq_cost_rkhs}
We illustrate here how the stage cost defined in \eqref{eq:stage_cost_duffing} fulfills Assumption~\ref{ass:decay_stage_cost} and \ref{ass:value_function_lipschitz_cont} by invoking Proposition~\ref{prop:VT_lipschitz_rho}.

Fix \( \lambda \in (0, 1) \). Recall that the stage cost as defined as:
\begin{equation*}
\ell(t, z, u) \coloneqq \lambda^t (\hnorm{z - \psi_0}^2 + r\, c(u)), \quad z \in \cH, \; u \in \cU.
\end{equation*}

Note that, for every \( t \in \bbN_0 \) and every \( z \in \cH \),
\begin{equation*}
\ell(t, z, u) = \lambda^t (\hnorm{z - \psi_0}^2 + r\, c(u)) = \lambda^t \ell(0, z, u).
\end{equation*}
Therefore, Assumption \ref{ass:decay_stage_cost} is satisfied with \( \beta_t \coloneqq \lambda^t \) and \( B \coloneqq \sum_{t=0}^\infty \lambda^t = \frac{1}{1 - \lambda} \).

For Assumption~\ref{ass:value_function_lipschitz_cont}, fix \( t \in \{0,\dots ,T-1\} \) and $u \in \cU$. Then \( \ell(0, \cdot, u) \) is Lipschitz on the ball \( \cD_t \) with constant \( L_t = 2 \kappa \left( 1 + \eta R_*^{\,t} \right) \). Indeed, for \( z, z' \in \cD_t \), 
\begin{align*}
\abs{\ell(0, z, u)-\ell(0, z', u)} &= \abs{\hnorm{z - \psi_0}^2-\hnorm{z' - \psi_0}^2 + r\, c(u) - r\, c(u)} \\
&= \abs{\hnorm{z - \psi_0}-\hnorm{z' - \psi_0}} \cdot \left( \hnorm{z - \psi_0} + \hnorm{z' - \psi_0} \right) \\
&\le \hnorm{z-z'} \cdot  2 \kappa \left(\eta R_*^{\,t} +1 \right), 
\end{align*}
where we used the reverse triangle inequality and, for instance, $\hnorm{z - \psi_0}\leq \hnorm{z}+\hnorm{\psi_0}\leq \kappa \eta R_*^t + \kappa$ since \( z \in \cD_t \). Hence, the conditions of Proposition~\ref{prop:VT_lipschitz_rho} are satisfied. Lastly, note that, if $R_*^t<1$, there exists $\rho_\infty$ s.t.\ $\rho_T\leq \rho_\infty$, where
\begin{equation*}
    \rho_{\infty}(r) = \underbrace{\left[\sum_{t=0}^\infty 2\kappa(1+\eta R_*^t)R_*^t\right]}_{<\infty}\cdot \, r.
\end{equation*}

\clearpage
\bibliographystyle{unsrt}
\bibliography{bibliography}

\end{document}